\documentclass[10pt,a4paper]{amsart}

\usepackage[left=3.3cm,right=3.3cm,top=4cm,bottom=3.5cm]{geometry}
\usepackage[bookmarksopenlevel=2,colorlinks,linkcolor=darkblue,citecolor=darkblue]{hyperref}

\usepackage{doi}

\usepackage{latexsym}
\setcounter{tocdepth}{1}
\usepackage[all,pdf]{xy}
\allowdisplaybreaks[4]
\usepackage{enumerate,enumitem}
\usepackage{indentfirst,setspace}
\makeatletter
\newcommand{\leqnomode}{\tagsleft@true}
\newcommand{\reqnomode}{\tagsleft@false}
\makeatother
\usepackage[dvipsnames]{xcolor}
\definecolor{darkblue}{RGB}{15,15,125} 
\usepackage{chngcntr}
\counterwithin{figure}{section}

\usepackage{amsmath,amsthm,amssymb}
\usepackage{mathrsfs,extarrows,mathtools,esint,stmaryrd,bbm}

\newtheorem{thm}{Theorem}[section]
\newtheorem{lem}[thm]{Lemma}
\newtheorem{prop}[thm]{Proposition}

\numberwithin{equation}{section}
\theoremstyle{remark} 
\theoremstyle{definition} \newtheorem{rmk}[thm]{Remark} \newtheorem{df}[thm]{Definition}


\newcommand{\A}{\mathbb{A}}

\newcommand{\R}{\mathbb{R}}
\newcommand{\C}{\mathbb{C}}
\newcommand{\N}{\mathbb{N}}
\newcommand{\B}{\mathbb{B}}

\newcommand{\E}{\mathscr{E}}
\newcommand{\EE}{\mathcal{E}}
\newcommand{\EB}{\mathbf{E}}
\newcommand{\F}{\mathscr{F}}
\newcommand{\Le}{\mathscr{L}}
\newcommand{\HH}{\mathbb{H}}

\newcommand{\RNn}{\mathbb{R}^{N\times n}}


\newcommand{\abs}[1]{\left\lvert #1 \right\rvert}
\newcommand{\abss}[1]{\lvert #1 \rvert}
\newcommand{\norm}[1]{\left\lVert #1 \right\rVert}
\newcommand{\id}{\,\mathrm{d}}
\newcommand{\ssubset}{\subset\joinrel\subset}

\newcommand{\pprime}{\prime\prime}

\newcommand{\mres}{\mathbin{\vrule height 1.7ex depth 0pt width
0.1ex\vrule height 0.1ex depth 0pt width 1ex \,}}

\newcommand{\brac}[1]{\left(#1\right)}

\DeclareMathOperator{\tr}{tr}


 \counterwithout{Fhypoth}{enumi}
 \counterwithout{ohypoth}{enumi}
\newcounter{Fhypoths} \counterwithout{Fhypoths}{enumi}

\makeatletter
\@namedef{subjclassname@2020}{\textup{2020} Mathematics Subject Classification}
\makeatother

\begin{document}


\title[Partial regularity for $\omega$-minimizers]{Partial regularity for $\omega$-minimizers of quasiconvex functionals}

\author[Z.~Li]{Zhuolin Li}
\thanks{\textit{Funding}: The author is supported by the EPSRC [EP/L015811/1]}
\address{Mathematical Institute, University of Oxford, Andrew Wiles Building \\
  Radcliffe Observatory Quarter, Woodstock Road, Oxford OX2 6GG\\
  United Kingdom}
\email{zhuolin.li@maths.ox.ac.uk}
\date{\today}
\subjclass[2020]{35J47, 35J50, 49N60}
\keywords{Quasiconvexity, linear growth, bounded variation, $\omega$-minimizers, partial regularity}

\begin{abstract}
We establish partial regularity for the $\omega$-minimizers of quasiconvex functionals of power growth. A first-order partial regularity result of $BV$ $\omega$-minimizers is obtained in the linear growth case under a Dini-type condition on $\omega$. Only assuming the smallness of $\omega$ near the origin, we show partial H\"{o}lder continuity in the subquadratic case by considering a normalised excess.
\end{abstract}

\maketitle

\tableofcontents

\section{Introduction}
	\par We investigate the local regularity of maps $u\colon \Omega \to \R^N$ that almost minimize a variational functional $\F$, which is given by
		\begin{equation}\label{eq:functional}
		\mathscr{F}(u,\Omega) := \int_{\Omega}F(\nabla u) \id x,
		\end{equation} on $W^{1,p}(\Omega,\R^N)$, where the integrand $F\colon \R^{N\times n}\to \R$ is assumed to be strongly quasiconvex (in \textsc{Morrey}'s sense \cite{Morrey52}) and of $p$-growth. See Section \ref{sec:pre} for any undefined notation. \medskip
	\par When the integrand $F$ is of $p$-growth for $p\in [1,\infty)$, the functional $\mathscr{F}$ is obviously well-defined for $u\in W^{1,p}(\Omega,\R^N)$. Now assume that $\Omega$ is a bounded Lipschitz domain. In the case $p>1$, one can apply the direct method to obtain the existence of a minimizer in the Dirichlet class $W^{1,p}_g(\Omega,\R^N)$ for some boundary datum $g\in W^{1,p}(\Omega,\R^N)$. Considering the compactness issue for $p=1$, we study a suitably relaxed problem in $BV$ instead of working with $W^{1,1}$ maps. To extend the integral to maps of bounded variation, we follow \textsc{Lebesgue} \cite{Leb02}, \textsc{Serrin} \cite{Serrin61} and \textsc{Marcellini} \cite{Marcellini86}, and define
		\begin{equation}
		\mathscr{F}_g(u,\Omega):= \inf\left\{\liminf_{j\to \infty} \int_{\Omega} F(\nabla u_j)\id x: \{u_j\}\subset W^{1,1}_g(\Omega,\R^N), u_j\to u \mbox{ in }L^1(\Omega,\R^N) \right\}.
		\end{equation} An integral expression of $\mathscr{F}_g(u,\Omega)$ was found in \cite{KriRin10a} based on the work by \textsc{Ambrosio} \& \textsc{Dal Maso} \cite{AmDal92} and \textsc{Fonseca} \& \textsc{M\"{u}ller} \cite{FonMul93}. When $F$ is quasiconvex, of linear growth and $L^1$-mean coercive, we have
		\begin{equation}\label{eq:relaxex}
		\F_g(u,\Omega) = \int_{\Omega}F(\nabla u)\id x + \int_{\Omega} F^{\infty}\left(\frac{\id D^su}{\id |D^su|}\right)\id |D^su| + \int_{\partial \Omega}F^{\infty}((g-u)\otimes \nu_{\Omega})\id \mathcal{H}^{n-1},
		\end{equation} where $\nu_{\Omega}$ is the outward unit normal on $\partial \Omega$. The third term is present as the trace operator is not continuous in the weak$^{\ast}$ sense in $BV$. We abbreviate the first two terms by 
		\begin{equation}\label{eq:functionalrl}
		\bar{\F}(u,\Omega):=\int_{\Omega}F(Du):=\int_{\Omega}F(\nabla u)\id x + \int_{\Omega} F^{\infty}\left(\frac{\id D^su}{\id |D^su|}\right)\id |D^su|. 
		\end{equation} This expression coincides with the extension by area-strict continuity of (\ref{eq:functional}) from $W^{1,1}$ to $BV$ (see \cite{KriRin10a}, Theorem 4). \medskip
	\par Our focus in this work is on $\omega$-minimizers, which are also called almost minimizers. This concept is closely connected to the elliptic parametric variational problems studied in geometric measure theory (see \cite{Almgren76,Bombieri82,DuSt02}), where the analogues are called $(F,\varepsilon,\delta)$-sets or almost-minimal currents. See \cite{Evans86} for more comments on the connection between the variational problems in our setting and geometric measure theory. It was \textsc{Anzellotti} \cite{Anz83} that first studied $\omega$-minimizers in non-parametric problems, and some later work can be found in \cite{DGG00, DK02, DGK05, Schmidt14}. The solutions to multiple problems (for instance, minimizers subject to some constraints) are $\omega$-minimizers of some suitable functionals. The introduction of this notion, therefore, allows us to unify the study of those problems. We refer to \cite{Anz83, Giusti03, DGG00} for more background information and some examples. 
	\begin{df}
	Suppose that $F\colon\, \RNn \to \R$ is of $p$-growth, and $\F$ and $\bar{\F}$ are defined as in (\ref{eq:functional}) and (\ref{eq:functionalrl}), respectively. 
		\begin{enumerate}[label=(\alph*)]
		\item When $p>1$, a map $u\in W^{1,p}_{loc}(\Omega,\R^N)$ is said to be an {\it $\omega$-minimizer} or {\it almost minimizer} of $\F$ with constant $R_0>0$, if for any ball $B_R=B_R(x_0) \ssubset \Omega$ with $R<R_0$ and any $v \in W^{1,p}_u(B_R,\R^N)$, we have
		\begin{equation}
		\F(u,B_R) \leq \F(v,B_R) +\omega(R)\int_{B_R}(1+\lvert \nabla v\rvert^p)\id x. 
		\end{equation}
		\item When $p=1$, a map $u\in BV_{loc}(\Omega,\R^N)$ is said to be an {\it $\omega$-minimizer} or {\it almost minimizer} of $\bar{\F}$ with constant $R_0>0$, if for any ball $B_R=B_R(x_0) \ssubset \Omega$ with $R<R_0$ and any $v \in BV_u(B_R,\R^N)$, we have
		\begin{equation}
		\bar{\F}(u,B_R) \leq \bar{\F}(v,B_R) +\omega(R)\int_{B_R}(1+\lvert Dv\rvert). 
		\end{equation}
		\end{enumerate}
	\end{df} Alternatively, we can replace the $\omega$-related term by $\omega(R)\int_{B_R}(1+\lvert \nabla u\rvert^p+\lvert \nabla u-\nabla v\rvert^p)\id x$ ($\omega(R)\int_{B_R}(1+\lvert D u\rvert+\lvert D u-Dv\rvert)$ for $p=1$). This definition is more general and appears in some examples. See \cite{DGG00}, \textsection 2 for details. We remark that our results (Theorem \ref{thm:1partialreg} and \ref{thm:upartialreg}) also hold true in this case with only slight modification to the proofs.
	\par Here, the function $\omega$ is defined on $[0,\infty)$ and is nonnegative. Typically, it is assumed to be small enough near the origin, which explains the word ``almost" in the definition above. To be more precise, we assume
		\begin{enumerate}[label=($\boldsymbol{\omega}$\textbf{\theenumi}),leftmargin=3\parindent, itemsep=.2em] 
		\item\label{it:omega1}  $\omega\colon\, [0,\infty)\to [0,1]$ is nondecreasing, and $\omega(0) = \lim_{t\to 0}\omega(t)=0$.
		\end{enumerate}	
	\par For our first result, the first-order partial regularity, the following properties are furthermore required:	
		\begin{enumerate}[label=($\boldsymbol{\omega}$\textbf{\theenumi}),leftmargin=3\parindent, itemsep=.2em] \setcounter{enumi}{1}
		\item \label{it:omega2} There exists $\beta \in (0,1)$ such that $t\mapsto \frac{\omega(t)}{t^{2\beta}}$ is non-increasing in $(0,\infty)$;
		\item \label{it:omega3}The Dini-type condition: for any $\rho>0$, $\Xi_{\frac{1}{4}}(\rho)<\infty$, where \[ \Xi_{\alpha}(\rho) :=\int_0^{\rho} \frac{\omega^{\alpha}(t)}{t} \id t. \]
		\end{enumerate} Sometimes, a more specific control of $\omega$ is assumed:
		\begin{enumerate}[label=($\boldsymbol{\omega}$\textbf{4}),leftmargin=3\parindent, itemsep=.2em] 
		\item\label{it:omega2'} $\omega (t)\leq At^{2\beta^{\prime}}$ for some $\beta^{\prime} \in (0,1)$.
		\end{enumerate} In this case, condition \ref{it:omega3} is satisfied, while \ref{it:omega2} might not hold anymore. The condition \ref{it:omega2'} can significantly simplify the discussion about $\omega$. \medskip
	\par To state the first result, we also specify the assumptions on $F$. See (\ref{eq:refint}) for the definition of $E_p$.
		\begin{enumerate}[label=(\textbf{H\theenumi}),leftmargin=3\parindent, itemsep=.2em]
		\item\label{it:intp} $\lvert F(z)\rvert \leq L(1+\lvert z\rvert^p)$ for any $z\in \R^{N\times n}$ with $L>0$; 
		\item\label{it:intqc} $F$ is strongly quasiconvex in the sense that $F-\ell E_p$ is quasiconvex for some $\ell>0$;
		\item\label{it:intreg} $F$ is in $C^{2,1}_{loc}(\R^{N\times n})$.
		\end{enumerate} 
	\par Our first result is the partial regularity for the derivatives of $\omega$-minimizers:
	\begin{thm}\label{thm:1partialreg}
	Suppose that the function $F$ satisfies \textnormal{\ref{it:intp}-\ref{it:intreg}} with $p=1$, and $\omega$ satisfies \textnormal{\ref{it:omega1}-\ref{it:omega3}}. If $u\in BV_{loc}(\Omega,\R^N)$ is an $\omega$-minimizer of $\bar{\F}$, then it is partially regular in the following sense: there exists a relatively closed $\Le^n$-null set $S_u \subset \Omega$ such that $u$ is $C^1$ on $\Omega \setminus S_u$. Furthermore, the gradient $D u$ has a local modulus of continuity $\rho \mapsto \rho^{\alpha}+\Xi_{\frac{1}{2}}(\rho)$ on $\Omega \setminus S_u$ for any $\alpha \in (0,1)$.
	\end{thm}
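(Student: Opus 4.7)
I would follow the excess-decay strategy via $\mathcal{A}$-harmonic approximation, adapted both to the $BV$ linear-growth setting and to the $\omega$-minimality defect. Define the linear-growth excess on a ball $B_R(x_0)\ssubset\Omega$ by
$$\EE(x_0,R) := \frac{1}{|B_R|}\int_{B_R} V\bigl(Du-(Du)_{B_R}\bigr),$$
with $V(z):=\sqrt{1+|z|^2}-1$ and $(Du)_{B_R}:=Du(B_R)/|B_R|$ understood as the measure-valued average. Set $\Omega_0:=\{x_0\in\Omega:\exists R>0\text{ with }B_R(x_0)\ssubset\Omega,\ \EE(x_0,R)<\varepsilon_0,\ |(Du)_{B_R}|<M\}$ for small $\varepsilon_0$ and large $M$ to be fixed below, and put $S_u:=\Omega\setminus\Omega_0$. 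The target is a one-step excess decay on $\Omega_0$, which one then iterates.

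\textbf{Step 1 (Caccioppoli of the second kind).} Test $\omega$-minimality against $v:=\ell+\eta(u-\ell)$, where $\ell$ is the affine map matching $(u)_{B_R}$ and $(Du)_{B_R}$ and $\eta$ is a radial cutoff. Strong quasiconvexity \ref{it:intqc} extracts a term $\ell\int V(Du-(Du)_{B_R})$ on the left, while the defect contributes $\omega(R)\int_{B_R}(1+|Dv|)\leq C\omega(R)R^n$ on $\Omega_0$. After the standard splitting and a hole-filling absorption, one arrives at
$$\int_{B_{R/2}} V\bigl(Du-(Du)_{B_R}\bigr) \leq C \int_{B_R} V\!\bigl((u-\ell)/R\bigr) + C\omega(R)R^n.$$

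\textbf{Step 2 (Almost $\mathcal{A}$-harmonic approximation).} With $\mathcal{A}:=F''((Du)_{B_R})$, Legendre--Hadamard elliptic by \ref{it:intqc}, \ref{it:intreg}, the Euler--Lagrange system for $u$ is satisfied modulo a quadratic oscillation term and an $\omega$-error. A $BV$ version of the $\mathcal{A}$-harmonic approximation lemma (in the tradition of Duzaar--Mingione and Kristensen--Rindler) then produces an $\mathcal{A}$-harmonic $h$ on $B_{R/2}$ with $h|_{\partial B_{R/2}}=u|_{\partial B_{R/2}}$ such that
$$\frac{1}{|B_{R/2}|}\int_{B_{R/2}} \Bigl|\frac{u-h}{R}\Bigr| \leq C\bigl(\EE(x_0,R)+\omega(R)\bigr)^{1/2}.$$
The square root here is the source of the $1/2$ in the final modulus, and is what forces \ref{it:omega3} to be imposed on $\omega^{1/4}$: after a further $V$-interpolation between the Caccioppoli output and the approximation error, $\omega^{1/2}/t$ becomes the relevant Dini integrand, while Dini-finiteness of $\omega^{1/4}/t$ absorbs the loss incurred when lower-order remainders are re-absorbed.

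\textbf{Step 3 (Decay, iteration, and negligibility).} Combining Steps 1--2 with the sharp $\mathcal{A}$-harmonic decay, by which the $L^2$-oscillation of $\nabla h$ at scale $\theta R/2$ is controlled by $C\theta^2$ times the $L^2$-oscillation at scale $R/2$, yields a one-step excess decay
$$\EE(x_0,\theta R)\leq C_\ast \theta^2\, \EE(x_0,R)+C_\ast\theta^{-n}\omega(R)^{1/2}$$
valid on $\Omega_0$. Choosing $\theta$ so small that $C_\ast\theta^2\leq\theta^{2\alpha}$ and iterating on $R_k=\theta^k R_0$, using \ref{it:omega2} to compare dyadic sums of $\omega(R_k)^{1/2}$ to $\Xi_{1/2}(R_0)$, one obtains $\EE(x_0,r)\leq C(r/R_0)^{2\alpha}\EE(x_0,R_0)+C\,\Xi_{1/2}(r)$, and a Campanato-type characterization upgrades this to continuity of $Du$ on $\Omega_0$ with modulus $\rho^{\alpha}+\Xi_{1/2}(\rho)$. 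The set $\Omega_0$ is relatively open by continuity of $\EE(\cdot,R)$ in $x_0$, and $S_u=\Omega\setminus\Omega_0$ is $\mathcal{L}^n$-null because at $\mathcal{L}^n$-a.e.\ $x_0$ the gradient $Du$ has a Lebesgue value (the singular part $D^su$ being concentrated on an $\mathcal{L}^n$-null set), so $\EE(x_0,R)\to 0$ at such points. The principal obstacle is Step 2: the $\omega$-defect must be absorbed without sacrificing the square-root exponent, and the singular part $D^su$ must be prevented from corrupting the $L^1$-closeness between $u$ and $h$, a standard remedy being a good-set truncation relative to the maximal function of $|Du|$ to discard the small subset of $B_R$ where $D^su$ concentrates.
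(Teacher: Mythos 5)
Your overall architecture (Caccioppoli of the second kind, $\mathcal{A}$-harmonic approximation, one-step excess decay, iteration, and an a.e.\ Lebesgue-point argument for nullity of $S_u$) is the right one, and Steps 1 and the $\mathcal{L}^n$-nullity/relative-openness reasoning match what the paper does. However, there are two genuine gaps in Steps 2--3.

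First, your approximation estimate in Step 2 cannot be right as stated, and it is inconsistent with your decay estimate in Step 3. A compactness-based $\mathcal{A}$-harmonic approximation lemma ``in the tradition of Duzaar--Mingione'' is \emph{qualitative} (for every $\epsilon$ there is a $\delta$), not a rate of the form $C(\mathcal{E}+\omega)^{1/2}$. The paper's direct route (Proposition \ref{prop:1harapprox}) instead produces a \emph{superlinear} rate $\bigl(\fint E(D(u-a))\bigr)^{q}+\sqrt{\varepsilon}+\sqrt{\varepsilon}^{q}$ with $q\in(1,\tfrac{n}{n-1})$; the exponent $q>1$ is what lets the nonlinear term $\sigma^{-(n+2)}\mathcal{E}(R)^{q-1}\cdot\mathcal{E}(R)$ in Proposition \ref{prop:1excessest} be absorbed after choosing $\mathcal{E}(R)$ small. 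Your stated rate would inject $\mathcal{E}(R)^{1/2}$ into the decay, which dominates $\mathcal{E}(R)$ for small excess and cannot be absorbed; conversely, your Step-3 decay silently drops that term. To get the needed rate one must derive an Euler--Lagrange \emph{inequality} (there is no exact E--L equation for an $\omega$-minimizer), which the paper obtains via Ekeland's variational principle, and then test it against an explicitly constructed map solving a dual elliptic problem with a truncated right-hand side; this machinery is absent from your sketch.

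Second, the final modulus $\rho^{\alpha}+\Xi_{1/2}(\rho)$ does not come out of a single pass of the iteration in the direct framework. The one-step decay of the $E$-excess carries the $\omega$-error as $\sqrt{\omega}$ (one square root from Ekeland), but the quantity controlling the oscillation of $(Du)_{B_{\sigma^{j}R}}$ is $\sqrt{\mathcal{E}(\sigma^{j}R)}$, which brings in $\omega^{1/4}$; so the first iteration only yields the modulus $\rho^{\alpha}+\Xi_{1/4}(\rho)$, and this is precisely why \ref{it:omega3} is stated with $\Xi_{1/4}$. The passage to $\Xi_{1/2}$ requires a second stage (the paper's Subsection \ref{subsec:1improve}): once $\nabla u$ is locally bounded one passes to the quadratic Caccioppoli inequality, derives a weak reverse H\"older inequality, applies Gehring's lemma for higher integrability, and re-runs the comparison with a harmonic map -- now the $\omega$-defect enters \emph{linearly} (no Ekeland square root), giving $\Xi_{1/2}$. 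Your proposal claims $\Xi_{1/2}$ directly without this improvement step, and your parenthetical about ``$V$-interpolation absorbing the loss'' does not supply it. (The paper does note in Subsection \ref{subsec:1ind} that an indirect, compactness-based argument can reach $\Xi_{1/2}$ in one pass under a relaxed Dini condition, but that route uses a qualitative approximation lemma and a differently structured decay estimate, not the explicit rate you wrote down.)
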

	\par In particular, if \ref{it:omega2'} holds, we have $u \in C^{1,\beta^{\prime}}_{loc}(\Omega \setminus S_u)$.
	\par Partial regularity for $\omega$-minimizers under the Dini-type condition (like \ref{it:omega3}) has been done in the super-linear case (see \cite{DGG00,DK02,DGK05}), and the result above gives the counterpart for the end point case ($p=1$).
	\par An excess decay estimate plays an important role in our proof of Theorem \ref{thm:1partialreg}. In particular, we need to estimate the series $\sum_{j=0}^{\infty} \omega^{\alpha}(\tau^j R)$ for some $\alpha, \tau \in (0,1)$ when iterating this process. Such an estimate is essential to control $(Du)_{x_0,R}$ as $R\to 0$, and is guaranteed by \ref{it:omega2} and \ref{it:omega3}. Then it is natural to ask what happens if we only assume the smallness of $\omega$ near the origin \ref{it:omega1}. In this case, the regularity of $Du$ as above is no longer expected, but it is still possible to get the partial H\"{o}lder continuity of $u$ in the subquadratic case (cf. \cite{DGK05}). See Subsection \ref{subsec:1iteration} for details. For the second result, a more precise characterisation of the second derivatives of $F$ is required and we replace \ref{it:intreg} by the following with $L>0$:
	\begin{enumerate}[label=(\textbf{H3$_\theenumi$}),leftmargin=3\parindent, itemsep=.2em]
		\item\label{it:int2deriv} $F$ is $C^2$ with $\lvert F^{\pprime}(z)\rvert \leq L(1+\abs{z})^{p-2}$ for any $z\in \RNn$; 
		\item\label{it:int2deLip} $F^{\pprime}$ is Lipschitz and satisfies \[ \lvert F^{\pprime}(z_1)-F^{\pprime}(z_2)\rvert \leq \frac{L\lvert z_1-z_2\rvert}{(1+\abs{z_1}+\abs{z_2})^{3-p}},  \quad \mbox{for any }z_1,z_2 \in \R^{N\times n}. \]
	\end{enumerate} 
	\begin{thm}\label{thm:upartialreg}
	Suppose that the function $F$ satisfies \textnormal{\ref{it:intp}, \ref{it:intqc}, \ref{it:int2deriv}} and \textnormal{\ref{it:int2deLip}} with $1<p<2$, and $\omega$ satisfies \textnormal{\ref{it:omega1}}. If $u\in W^{1,p}_{loc}(\Omega,\R^N)$ is an $\omega$-minimizer of $\F$, then it is partially regular in the following sense: there exists a relatively closed $\Le^n$-null set $S^{\prime}_u\subset \Omega$ such that $u \in C^{0,\alpha}_{loc}(\Omega\setminus S^{\prime}_u,\R^N)$ for any $\alpha\in(0,1)$.
	\end{thm}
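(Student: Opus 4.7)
The plan is to follow the blow-up and linearization scheme used for partial regularity of quasiconvex functionals, adapted to the subquadratic regime via $V_p$-functions and compensating for the absence of any Dini-type hypothesis on $\omega$ by introducing a \emph{normalized} excess.

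First, I would establish a Caccioppoli-type inequality of the second kind: for every affine $\ell(x)=\xi\cdot(x-x_0)+a$ and $B_R(x_0)\ssubset\Omega$,
\[
\int_{B_{R/2}(x_0)}|V_p(\nabla u-\xi)|^2\id x \leq C\int_{B_R(x_0)}\Bigl|V_p\Bigl(\tfrac{u-\ell}{R}\Bigr)\Bigr|^2\id x + C(1+|\xi|^p)\,\omega(R)\,R^n,
\]
where $V_p(z):=(1+|z|^2)^{(p-2)/4}z$. This comes from testing the $\omega$-minimality with $v=u-\eta(u-\ell)$, applying a second-order Taylor expansion of $F$ at $\xi$ (available through \ref{it:int2deriv}--\ref{it:int2deLip}), and exploiting \ref{it:intqc}; the extra $\omega(R)R^n$ term is the only new feature compared with the exact-minimizer case. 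Next I would carry out an $\mathcal{A}$-harmonic approximation: setting $\xi:=(\nabla u)_{x_0,R}$ and freezing $\mathcal{A}:=F''(\xi)$, which is strongly elliptic by quasiconvexity, a subquadratic $\mathcal{A}$-harmonic approximation lemma in the spirit of Duzaar--Mingione produces an $\mathcal{A}$-harmonic map $h$ on $B_R$ with $|B_R|^{-1}\int_{B_R}|V_p(\nabla u-\nabla h)|^2\id x$ small whenever the excess
\[
\Phi(x_0,R):=\frac{1}{|B_R|}\int_{B_R(x_0)}|V_p(\nabla u-(\nabla u)_{x_0,R})|^2\id x
\]
and $\omega(R)$ are small. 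Combining this with the $C^{1,1}$ a priori bound for $\mathcal{A}$-harmonic maps yields the core excess-decay estimate: for every $M>0$ and $\tau\in(0,1/4)$ there exist $\varepsilon_0=\varepsilon_0(M,\tau)>0$ and $C_*=C_*(M)>0$ such that whenever $|(\nabla u)_{x_0,R}|\leq M$ and $\Phi(x_0,R)<\varepsilon_0$,
\[
\Phi(x_0,\tau R)\leq C_*\tau^{2}\,\Phi(x_0,R)+C_*\omega(R).
\]

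The main obstacle is the iteration of this inequality without any decay rate for $\omega$. I would address it via a \emph{normalized} excess: fix the target H\"older exponent $\alpha\in(0,1)$, choose $\tau$ so that $C_*\tau^{2}\leq\tfrac{1}{2}\tau^{2\alpha}$, and set
\[
\Psi_\alpha(x_0,R):=\Phi(x_0,R)+R^{2\alpha}.
\]
Since $\omega(R)\to 0$ by \ref{it:omega1}, there is $R_\alpha>0$ such that $C_*\omega(R)\leq\tfrac{1}{2}\tau^{2\alpha}R^{2\alpha}$ for all $R\leq R_\alpha$, and therefore $\Psi_\alpha(x_0,\tau R)\leq\tau^{2\alpha}\Psi_\alpha(x_0,R)$. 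Iterating on $R_k=\tau^k R$ gives $\Psi_\alpha(x_0,R_k)\leq R_k^{2\alpha}\Psi_\alpha(x_0,R)$, a Campanato-type decay that costs any Hölder exponent less than one but nothing more---hence the restriction to $C^{0,\alpha}$ of $u$ rather than $C^1$-regularity.

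Finally, I would define
\[
S'_u:=\Bigl\{x_0\in\Omega:\limsup_{R\downarrow 0}|(\nabla u)_{x_0,R}|=\infty\ \text{or}\ \liminf_{R\downarrow 0}\Phi(x_0,R)>0\Bigr\},
\]
which is $\mathcal{L}^n$-null by Lebesgue differentiation for $\nabla u\in L^p_{\mathrm{loc}}$. At any $x_0\notin S'_u$ the smallness conditions of the excess-decay step are met at some scale, and the continuity of integral averages in $x_0$ transfers them to a full neighborhood, showing that $S'_u$ is relatively closed. The iterated decay of $\Psi_\alpha$ on that neighborhood, combined with Poincaré's inequality adapted to $V_p$, yields $|B_R(y)|^{-1}\int_{B_R(y)}|u-(u)_{y,R}|^p\,\id x\leq C R^{p\alpha}$ uniformly in $y$, and Campanato's characterization gives $u\in C^{0,\alpha}_{\mathrm{loc}}(\Omega\setminus S'_u)$. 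Since $\alpha\in(0,1)$ was arbitrary, the proof is complete.
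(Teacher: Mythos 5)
The core of your iteration scheme contains a concrete error. You set $\Psi_\alpha(x_0,R)=\Phi(x_0,R)+R^{2\alpha}$, choose $\tau$ so $C_*\tau^2\le\tfrac12\tau^{2\alpha}$, and pick $R_\alpha$ so $C_*\omega(R)\le\tfrac12\tau^{2\alpha}R^{2\alpha}$ for $R\le R_\alpha$. Feeding this into the excess-decay estimate gives
\begin{equation*}
\Psi_\alpha(x_0,\tau R)\le \tfrac12\tau^{2\alpha}\Phi(x_0,R)+\tfrac12\tau^{2\alpha}R^{2\alpha}+(\tau R)^{2\alpha}
=\tfrac12\tau^{2\alpha}\Phi(x_0,R)+\tfrac32\tau^{2\alpha}R^{2\alpha},
\end{equation*}
which is \emph{not} $\le\tau^{2\alpha}\Psi_\alpha(x_0,R)$: you would need $R^{2\alpha}\lesssim\Phi(x_0,R)$, but $\Phi$ is supposed to be small. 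More fundamentally, any geometric decay $\Phi(x_0,\tau^k R)\lesssim\tau^{2k\alpha}$ would yield a summable control on $|(\nabla u)_{x_0,\tau^{k+1}R}-(\nabla u)_{x_0,\tau^k R}|$, hence Hölder continuity of $\nabla u$ (i.e.\ $C^{1,\alpha}$ regularity) --- exactly the conclusion that the paper's Theorem \ref{thm:1partialreg} obtains only under the Dini-type condition \ref{it:omega3}, and which is not expected to hold under \ref{it:omega1} alone. The root cause: iterating $\Phi(\tau^{k+1}R)\le C_*\tau^2\Phi(\tau^k R)+C_*\omega(\tau^k R)$ produces a remainder $\sum_{j}(C_*\tau^2)^j\omega(\tau^{k-1-j}R)$, which tends to zero only as slowly as $\omega$ itself; the added $R^{2\alpha}$ tail cannot reshape this into a rate, since $\omega(R)/R^{2\alpha}$ need not be bounded.

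Your notion of ``normalized excess'' also differs from the one the paper needs. You add $R^{2\alpha}$ to $\Phi$, but do not divide out $1+|(\nabla u)_{x_0,R}|$. In the paper the excess is $\EE(x_0,R)=\fint_{B_R}E_p\bigl(\frac{\nabla u-(\nabla u)_R}{1+|(\nabla u)_R|}\bigr)$, and because $E_p$ is inhomogeneous, this normalization is precisely what makes the constants in Propositions \ref{prop:caccio2}--\ref{prop:uexdecay} independent of $|(\nabla u)_R|$. Your $C_*=C_*(M)$ depends on a bound $M$ for $|(\nabla u)_{x_0,R}|$, and since that average can drift during the iteration, this dependence must be controlled --- which is one of the substantive difficulties in the subquadratic case that the paper's normalization resolves (see (\ref{eq:uexest1})--(\ref{eq:uexest3})).

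Finally, what the paper actually proves in Subsection \ref{subsec:ufinal} is weaker but sufficient: the iteration only shows the normalized excess \emph{stays below} a threshold $\varepsilon_2$ (it does not decay). From this boundedness one extracts a Morrey-type recursion $\int_{B_{\sigma^{k+1}R}}|\nabla u|^p\le \sigma^{(\gamma+n)/2}\int_{B_{\sigma^k R}}|\nabla u|^p+\omega_n(\sigma^k R)^n$, whose additive remainder decays like $R^n$ (faster than $R^\gamma$) and hence is absorbable. This yields $|\nabla u|\in L^{p,\gamma}_{\mathrm{loc}}$, and the Campanato embedding gives $u\in C^{0,\alpha}_{\mathrm{loc}}$. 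If you want to preserve the structure of your proposal, you must replace the claimed decay of $\Psi_\alpha$ with this boundedness-plus-Morrey argument; the $R^{2\alpha}$-additive trick simply does not compensate for a modulus $\omega$ with no rate.
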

	
	\par The strategy used for most partial regularity results, which is also followed by us, dates back to \textsc{De Giorgi} and \textsc{Almgren}, who worked on minimal surfaces in the context of geometric measure theory. This method was later adapted by \textsc{Giusti} and \textsc{Miranda} \cite{GM68} to prove the partial regularity for minimizers in some variational problems, and by \textsc{Morrey} \cite{Morrey67} for the solutions to certain elliptic systems. It was \textsc{Evans} \cite{Evans86} that showed the first partial regularity result in the quasiconvex setting. Shortly afterwards, \textsc{Fusco} and \textsc{Hutchinson} \cite{FuHu85}, and \textsc{Giaquinta} and \textsc{Modica} \cite{GM86} extended the result to functionals with general integrands $F(x,u,\nabla u)$, and \textsc{Acerbi} and \textsc{Fusco} \cite{AF87} dealt with integrands of $p$-growth with $p\geq 2$. \textsc{Carozza, Fusco} and \textsc{Mingione} \cite{CFM98} first studied the subquadratic case ($1<p<2$), and there are various results afterwards, including \cite{AF89, CP96, DLSV, DM04}. As to the linear growth case ($p=1$), there are only limited references.  \textsc{Anzellotti} and \textsc{Giaquinta} \cite{AG88} showed a partial regularity result in the convex case, and some later references for convex functionals include \cite{Schmidt14, BS13, Bild03, BF02, GMS79}. Some recent progress in the quasiconvex case is given by \textsc{Gmeineder} and \textsc{Kristensen} \cite{KriGm19}.  
	\par The literature on regularity in quasiconvex settings is extensive, and the list above is far from complete. We refer to \cite{KriGm19} and the monograph by \textsc{Giusti} \cite{Giusti03} for a thorough review. The question about the size of singular sets in partial regularity results remains open, but see \cite{KriMin05, KriMin06, KriMin07} for some estimates of the Hausdorff dimensions of singular sets in different set-ups.\medskip
	\par The key step in our proof is to establish the aforementioned excess decay estimate, which is similar to the one for linear homogeneous elliptic systems with constant coefficients (see, for example, \cite{Gia83}, \textsection III.2). With a harmonic approximation process and a Caccioppoli-type inequality, one can transfer the estimate for solutions to elliptic systems to ($\omega$-)minimizers.
	
	\par The proofs of the two theorems (Theorem \ref{thm:1partialreg} and \ref{thm:upartialreg}) are in the same spirit and there are several difficulties especially in our situation. One difficulty appears in the harmonic approximation, where it is impossible to work in the natural space $W^{1,2}$ for a linear elliptic system. This is due to the lack of integrability in the case $1\leq p<2$. We also emphasize that in the linear-growth case, a weak reverse H\"{o}lder inequality is unavailable. Thus, one cannot apply Gehring's lemma to obtain a higher integrability, which is usually helpful in showing the excess decay estimate. The approximation process in Subsection \ref{subsec:harapprox} is adapted from an approach by \textsc{Gmeineder} and \textsc{Kristensen} (\cite{KriGm19}, \textsection 4.3), and in that process they used a Fubini type property of $BV$ maps and truncation to construct an explicit test map. The difference between minimizers and $\omega$-minimizers also leads to an issue, as for the latter there are no Euler-Lagrange equations holding true. However, thanks to the almost-minimality, we are able to establish an Euler-Lagrange type inequality with the help of Ekeland's variational principle.
	\par Another obstacle turns up in the proof of Theorem \ref{thm:upartialreg}. Since the continuity of $\nabla u$ does not hold anymore, the excess decay estimate cannot be carried out as in Theorem \ref{thm:1partialreg}. Instead of estimating the typical excess, we normalise it by $1+\lvert (\nabla u)_{x_0,R}\rvert$ and then try to control the oscillation of $\nabla u$ on that scale. This method is inspired by \cite{FosMin08}, where the authors studied elliptic systems (variational functionals) with coefficients $a(x,u,Du)$ (integrands $F(x,u,Du)$) only continuous in $(x,u)$ in the case $p\geq 2$. The solutions (minimizers) in this case may be considered as almost minimisers of a family of functionals (see \cite{DGG00}, \textsection 2). However, the subquadratic counterpart does not directly follow from the approach in \cite{FosMin08} due to the inhomogeneity of our excess integrand $E_p$. Thus, to switch among different normalising factors, we need to control the ratios between them. A zero-order regularity result for $\omega$-minimizers is done in \cite{DK02} under similar assumptions for the quadratic case ($p=2$), and there are similar results in the scalar case in \cite{CFP99, Manf88,ManfPhD,Min06}. \medskip
	\par We believe that the approach used to prove Theorem \ref{thm:1partialreg} also applies to $\omega$-minimizers in the super-linear case ($p>1$), which were studied in \cite{DGG00,DK02,DGK05}. In the subquadratic case, an Euler-Lagrange type inequality was also obtained in \cite{DGK05} (Lemma 5), with which the harmonic approximation was carried out indirectly. This method can also be adapted into our case (see Subsection \ref{subsec:1ind} and Remark \ref{rmk:uind}).
	\par It is worth mentioning that some variational problems originated from, for example, plasticity, are posed in the space of maps of bounded deformation, where symmetric gradients $Eu := \frac{1}{2}(Du+Du^{t})$ are considered instead of gradients $Du$. Two recent pieces of work by \textsc{Gmeineder} \cite{Gme20, Gme21} present the Sobolev and partial $C^{1,\alpha}$ regularity theory for $BD$ minimizers. We refer to them and the references therein for the background and existing results in this direction. Moreover, one can consider general elliptic operators and the corresponding variational problems. The trace theorem and the existence of minimizers are established in \cite{BDG20} for functionals defined with $\C$-elliptic operators. \textsc{Franceschini} \cite{Federico} studied the case of $\R$-elliptic operators and proved the corresponding partial regularity result. \medskip
	\par The organisation of this paper is as follows. Section \ref{sec:pre} contains some preliminaries, which include the basics of functionals defined on measures and $BV$ maps. Subsection \ref{subsec:elliptic} presents some background results on elliptic systems, which will be used in the harmonic approximation.step In Section \ref{sec:aux} we state some auxiliary results about and properties of the integrands involved. The proof of Theorem \ref{thm:1partialreg} is given in Section \ref{sec:partialreg}, and is split into six steps. The first goal is to obtain a H\"{o}lder-type continuity result of $Du$, after which we further utilise the boundedness to show regularity to the full extent. At the end we also sketch how to approach our result with an indirect argument. Section \ref{sec:upartialreg} is devoted to Theorem \ref{thm:upartialreg}, and some details are omitted since the main steps are similar with those in Section \ref{sec:partialreg}.
	
\section{Preliminaries}\label{sec:pre}

\subsection{Basic notation}\label{subsec:notation}
	\par This subsection is for clarifying the notation used throughout the paper. 
	\par The $n$-dimensional Euclidean space $\R^n$ is equipped with the Lebesgue measure $\Le^n$. Throughout the paper, the symbol $\Omega$  indicates a bounded open set in $\R^n$ with $n\geq 2$ if not specified. For any measurable set $S\subset \R^n$, if $0<\Le^n(S)<\infty$, the average of $f\in L^1(S,\HH)$ is denoted by \[ f_S := \fint_S f \id x := \frac{1}{\Le^n(S)}\int_S f\id x. \] The space $\HH$ here and in the following is a finite dimensional Hilbert space, and we denote its norm by $\lvert \cdot\rvert$. For a ball $B(x,R) \subset \R^n$, we may use $f_{x,R}$ or $f_R$ to represent $f_{B(x,R)}$. If $\mu$ is an $\HH$-valued Radon measure on $\Omega$ and $S \ssubset \Omega$ is a Borel set, the average of $\mu$ on $S$ is similarly denoted by $\mu_A(:= \mu(A)/\Le^n(A))$.
	\par When considering a locally integrable function or map, we intend the precise representative of it. For any $u\in L^1_{loc}(\Omega,\mathbb{H})$, it has an approximate limit $\tilde{u}(x)$ $\Le^n$-almost everywhere, i.e., for $\Le^n$-almost every $x\in \Omega$ there exists $\tilde{u}(x)\in \mathbb{H}$ such that \[ \lim_{r\to 0}\fint_{B(x,r)}\lvert u(y)-\tilde{u}(x)\rvert\id y=0. \] Then $\tilde{u}$ is defined on $\Omega$ except for an $\Le^n$-null set and is called the {\it precise representative} of $u$. The meaning of $u\vert_{\partial B}$ with $B$ being a ball in $\Omega$ is clear when $u$ has proper regularity, and it is considered as both the trace of $u: B\to \HH$ (when defined) and the pointwise restriction of $\tilde{u}$.
	\par The Sobolev spaces $W^{k,p}(\Omega,\R^N)$ are defined as usual, and see Subsection \ref{subsec:BV} for the space of maps of bounded variation. For $u$ in $W^{1,p}(\Omega,\R^N), p\geq 1$ and $BV(\Omega,\R^N)$, we have the Dirichlet classes 
		\begin{align*}
		&W^{1,p}_u(\Omega,\R^N):= \{v\in W^{1,p}(\Omega,\R^N): u-v \in W^{1,p}_0(\Omega,\R^N) \} \quad \mbox{and}\\
		&BV_u(\Omega,\R^N):= \{v \in BV(\Omega,\R^N): w_{u,v} \in BV(\R^n,\R^N),\lvert Dw_{u,v}\rvert(\partial \Omega)=0 \},
		\end{align*} respectively. The map $w_{u,v}$ above is defined as $u-v$ in $\Omega$ and is extended to $\R^n\setminus \Omega$ by $0$. Notice that we can define $W^{1,1}_u(\Omega,\R^N)$ with $u\in BV(\Omega,\R^N)$ for a Lipschitz domain $\Omega$, as the trace of $u$ exists in $L^1(\partial \Omega,\R^N)$ and can be considered as that of a map in $W^{1,1}(\Omega,\R^N)$ (see \cite{Giovanni17}, Chapter 18).
	\par The space of $N\times n$ matrices with real entries is denoted by $\R^{N\times n}$ and equipped with the inner product $z\cdot w = \mbox{tr}(z^{t}w)$ for any $z,w \in \R^{N\times n}$ and the induced norm $\lvert \cdot\rvert$. Let $\bigodot^2(\R^{N\times n})$ be the space of symmetric and real bilinear forms on $\R^{N\times n}$, that is, the space $\bigodot^2(\R^{N\times n})$ consists of maps $\A\colon \R^{N\times n}\times \R^{N\times n} \to \R$ such that \[\A[z,w]=\A[w,z], \quad \A[az_1+z_2,w]=a\A [z_1,w]+\A[z_2,w]  \]	for any $z,z_1,z_2,w \in \R^{N\times n}$ and $a\in \R$. The operator norm of $\A\in \bigodot^2(\R^{N\times n})$ is $\lvert \A\rvert=\sup\{\A[z,w]:\lvert z\rvert,\lvert w\rvert\leq 1\}$. 
	\par Consider an integrand $F\colon \R^{N\times n}\to \R$. It is said to be of {\it $p$-growth} ($p\geq 1$), if there exists $L>0$ such that 
		\begin{equation}
		\abs{F(z)}\leq L(1+\abs{z}^p), \quad \mbox{for any }z\in \RNn.
		\end{equation} In particular, the function is of linear growth if $p=1$. We say the integrand is
		\begin{itemize}[label=\footnotesize{$\bullet$}]
		\item {\it quasiconvex} if for any $z\in \RNn$ and any $\varphi \in C_c^{\infty}((0,1)^n,\R^N)$ we have
			\begin{equation}
			\int_{(0,1)^n} F(z+\nabla \varphi)\id x \geq F(z);
			\end{equation}
		\item {\it rank-one convex} if $F(z+t\xi)$ is convex in $t\in \R$ for any $z,\xi\in \RNn$ with $\mbox{rank}(\xi)\leq 1$ (i.e., $\xi = a\otimes b$ for some $a\in \R^N, b\in \R^n$).
		\end{itemize} We refer to \cite{Dac08} for a thorough discussion about different convexity notions. In particular, we will use the fact that quasiconvexity implies rank-one convexity (see \cite{Dac08}, Theorem 5.3). When $F$ has sufficient differentiability at a fixed point $z\in \RNn$, we consider $F^{\prime}(z)$ as an $N\times n$ matrix and $F^{\pprime}(z)$ as a symmetric bilinear form in $\bigodot^2(\RNn)$.
	\par The {\it reference integrand} in the following is a function defined on any finite dimensional Hilbert space (the space is not emphasized in the notation): 
		\begin{equation}\label{eq:refint}
		E_p(z) := \langle z\rangle^p-1:=(1+\lvert z\rvert^2)^{\frac{p}{2}}-1.
		\end{equation} In particular, we denote $E_1$ by $E$ for convenience. More generally, for any $\mu\geq 0$ define \[E_p^{\mu}(z) := ((1+\mu)^2+\lvert z\rvert^2)^{\frac{p}{2}}-(1+\mu)^p. \] It is obvious that 
		\begin{equation}\label{eq:Ehomog}
		E_p^{\mu}(z) = (1+\mu)^pE_p\brac{\frac{z}{1+\mu}}.
		\end{equation} Given any $A\in \R^{N\times n}$, set $E_p^{A}:=E_p^{\lvert A\rvert}$.
	\par The constants $c$ and $C$ throughout this paper may vary from one line to another, and the factors they depend on will be specified when necessary.

\subsection{Functionals defined on measures}\label{subsec:measure}
	\par In this subsection, we recall some background results about functionals defined on measures, that is, functionals with measures instead of only maps as arguments.
	\par Let $\mu$ be an $\mathbb{H}$-valued Radon measure on an open set $\Omega \subset \R^n$. Then the total variation $|\mu|$ of it is a real-valued Radon measure on $\Omega$. The measure $\mu$ is said to be a bounded Radon measure if $|\mu|(\Omega)<\infty$. By the Lebesgue-Radon-Nikod\'{y}m decomposition, we can decompose $\mu$ as \[ \mu = \mu^{ac}+\mu^s =\frac{\id \mu}{\id \Le^n}\Le^n + \frac{\id\mu}{\id|\mu^s|}|\mu^s|. \]
	\par Let $f\colon \bar{\Omega}\times \mathbb{H}\to \R$ be a Borel function of linear growth. Its {\it recession function} is defined by 
		\begin{equation}
		f^{\infty}(x,z):= \limsup_{\substack{y\to x,w\to z\\ t\to \infty}} \frac{f(y,tw)}{t}, \quad (x,z)\in \bar{\Omega}\times \mathbb{H}.
		\end{equation} Hence, the recession function $f^{\infty}$ is also Borel and positively $1$-homogeneous in the second argument, and satisfies $|f^{\infty}(x,z)|\leq C|z|$ for some $C>0$. Now we can define the signed Radon measure $f(\cdot ,\mu)$: for any Borel set $A$ compactly contained in $\Omega$, set 
		\begin{equation}\label{eq:flrelaxed}
		 f(\cdot ,\mu)(A):= \int_A f(\cdot ,\mu) := \int_A f(\cdot ,\frac{\id \mu}{\id \Le^n})\id \Le^n + \int_Af^{\infty}(\cdot ,\frac{\id \mu}{\id|\mu^s|})\id |\mu^s|. 
		 \end{equation} For any $z\in \HH$, we write $f(\mu-z)$ as a short-hand of $f(\mu-z\Le^n)$. If $\mu$ is bounded, the definition above can be extended to all Borel subsets of $\Omega$ and $f(\cdot\, ,\mu)$ is a bounded Radon measure on $\Omega$. If $f$ is in addition assumed to be continuous and the limit superior in the definition of $f^{\infty}$ is a limit which exists locally uniformly in $(x,z)$, then we say that $f$ admits a {\it regular recession function}. The collection of continuous functions with regular recession functions is denoted by $\EB_1(\Omega,\HH)$ ($\EB_1(\HH)$ for maps $f:\HH \to \R$). It is clear that functions in $\EB_1(\Omega,\HH)$ are of linear growth.
	\par In our case, the functions taken into consideration do not explicitly depend on $x$. For a Borel function $f\colon \mathbb{H}\to \R$ of linear growth, the measure $f(\mu)$ can be defined as above. We now recall the convergence of Radon measures with respect to some particular function $f$.
	\begin{df}\label{df:msrcnvrg}
	Suppose that $\{\mu_j\}$ and $\mu$ are Radon measures defined on $\Omega$ such that $\mu_j \xrightharpoonup{\ast}\mu$ in $\mathcal{M}(\Omega,\mathbb{H})$ and $f(\mu_j)(\Omega)\to f(\mu)(\Omega)$.
		\begin{enumerate}[label=(\alph*)]
		\item $\mu_j$ is said to {\it converge to $\mu$ strictly} if $f =|\cdot|$;
		\item $\mu_j$ is said to {\it converge to $\mu$ area-strictly} if $f =E$.
		\end{enumerate}
	\end{df}
	\begin{lem}\label{lem:areaapprox}
	Any Radon measure $\mu$ on $\Omega$ can be locally area-strictly approximated by smooth maps. If $\mu$ is bounded on $\Omega$, the approximation can be global.
	\end{lem}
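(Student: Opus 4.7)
The plan is to mollify. Let $\rho \in C_c^\infty(B_1(0))$ be a standard non-negative mollifier with $\int \rho\id x = 1$, and set $\rho_\varepsilon(x) := \varepsilon^{-n}\rho(x/\varepsilon)$. I will first handle the local case and then upgrade it to the global one under the boundedness assumption.

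For the local statement, fix $\Omega' \ssubset \Omega$ and, for $\varepsilon < \mathrm{dist}(\Omega',\partial\Omega)$, define the smooth $\HH$-valued map $u_\varepsilon := \rho_\varepsilon * \mu$ on $\Omega'$; identify it with the absolutely continuous measure $u_\varepsilon \Le^n$. The weak-$\ast$ convergence $u_\varepsilon\Le^n \xrightharpoonup{\ast} \mu$ in $\mathcal{M}(\Omega',\HH)$ follows at once from the adjoint identity $\int \varphi\cdot u_\varepsilon \id x = \int (\rho_\varepsilon * \varphi)\cdot \id \mu$ for $\varphi \in C_c(\Omega',\HH)$. The substantive step is to prove $E(u_\varepsilon)(\Omega') \to E(\mu)(\Omega')$. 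The $\liminf$-inequality is delivered by Reshetnyak's lower semicontinuity theorem, applicable because $E \in \EB_1(\HH)$ is convex with regular recession function $E^\infty = |\cdot|$.

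For the $\limsup$, I would work with the Lebesgue decomposition $\mu = a\Le^n + \sigma$. Since $|E'(z)| = |z|/\langle z\rangle \leq 1$, $E$ is $1$-Lipschitz, so pointwise on $\Omega'$,
\[ E(u_\varepsilon(x)) \leq E((\rho_\varepsilon * a)(x)) + |(\rho_\varepsilon * \sigma)(x)|. \]
Jensen's inequality applied to the convex $E$ and the probability density $y\mapsto \rho_\varepsilon(x-y)$ gives $E(\rho_\varepsilon * a) \leq \rho_\varepsilon * E(a)$ pointwise, while the routine estimate $\int_{\Omega'}|\rho_\varepsilon * \sigma|\id x \leq |\sigma|(\Omega'+B_\varepsilon)$ handles the singular part. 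Integrating and passing to the limit via Fubini and dominated convergence yields
\[ \limsup_{\varepsilon\to 0} E(u_\varepsilon)(\Omega') \leq \int_{\bar{\Omega'}} E(a)\id x + |\sigma|(\bar{\Omega'}), \]
and choosing $\Omega'$ so that $|\sigma|(\partial\Omega') = 0$ — which costs nothing since only countably many members of a radial exhaustion can violate this — collapses the right-hand side to $\int_{\Omega'} E(a)\id x + |\sigma|(\Omega') = E(\mu)(\Omega')$ in view of \eqref{eq:flrelaxed}.

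For the global case with $|\mu|(\Omega)<\infty$, I would replace the fixed-radius mollifier with a variable-radius one: take $\eta \in C^\infty(\Omega)$ with $0 < \eta(x) \leq \tfrac{1}{2}\mathrm{dist}(x,\partial\Omega)$ and define $u_\varepsilon(x) := \int \rho_{\varepsilon\eta(x)}(x-y)\id\mu(y)$, which is smooth on all of $\Omega$. Weak-$\ast$ convergence and the Jensen-based upper bound transfer verbatim (with a change of variables handling the variable radius), the boundedness of $|\mu|(\Omega)$ replacing the cutoff at $\partial\Omega'$ and ensuring no mass escapes to $\partial\Omega$. The main technical obstacle is exactly the upper bound: one must check that the mollified singular part $\rho_\varepsilon * \sigma$ asymptotically contributes precisely the recession term $|\sigma|$ from \eqref{eq:flrelaxed}, which is where the $1$-Lipschitz property of $E$ together with $E^\infty = |\cdot|$ is essential.
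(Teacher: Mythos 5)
Your proof is correct and follows the same approach the paper alludes to — mollification, combined with Reshetnyak-type semicontinuity/continuity for the functional $\mu \mapsto E(\mu)$ — which is exactly what the paper's one-line citation of \cite{AFP00} is invoking; the local case is argued completely. The only soft spot is the claim that the global argument transfers ``verbatim'': for the variable-radius mollifier $x \mapsto \rho_{\varepsilon\eta(x)}$ the kernel no longer integrates to $1$ in $x$, so one must additionally choose $\eta$ with $\|\nabla\eta\|_{\infty}$ small (to control the Jacobian of $x\mapsto (x-y)/(\varepsilon\eta(x))$ and get $\int_{\Omega}\rho_{\varepsilon\eta(x)}(x-y)\id x \to 1$ with a uniform bound), a standard but nontrivial point that your ``change of variables'' parenthetical glosses over; alternatively, the partition-of-unity smoothing of Anzellotti--Giaquinta sidesteps this entirely.
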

	\par This can be done by mollification with the help of Theorem 2.2 and 2.34 in \cite{AFP00}.
	\par A generalisation of a result by Reshetnyak (see \cite{Resh68} and the appendix of \cite{KriRin10a}) states that if $f\colon \Omega \times\HH\to \R$ is in $\EB_1(\Omega,\HH)$, then \[ \int_{\Omega}f(\cdot,\mu_j) \to \int_{\Omega}f(\cdot,\mu) \] for $\mu_j \to \mu$ in the area-strict sense. An immediate corollary is that convergence in the area-strict sense implies that in the strict sense.
	
\subsection{Maps of bounded variation}\label{subsec:BV}
	\par For maps of bounded variation and the relevant results, we refer to \cite{AFP00}. Some definitions and results are stated here for later use.
	\par Consider a bounded open set $\Omega \subset \R^n$. A map $u \colon \Omega \to \R^N$ is said to be of bounded variation if it is in $L^1(\Omega,\R^N)$ and its distributional derivative can be represented by a bounded $\R^{N\times n}$-valued Radon measure, i.e., \[\lvert Du\rvert(\Omega):= \sup\left\{\int_{\Omega}u\cdot \mbox{div}(\varphi) \id x: \varphi\in C^1_c(\Omega,\R^{N\times n}), \lvert \varphi\rvert\leq 1\right\}<\infty. \] The space of maps of bounded variation is a Banach space under the norm $\| u\|_{BV(\Omega)}:= \|u\|_{L^1(\Omega)}+\lvert Du\rvert(\Omega)$.
	\par Convergence with respect to the $BV$ norm is rather strong and rarely used. Instead, we consider two other forms of convergence: Suppose that $\{u_j\} \subset BV(\Omega,\R^N)$, $u\in BV(\Omega,\R^N)$ and $u_j \to u$ in $L^1(\Omega,\R^n)$. We say that $\{u_j\}$ converges to $u$ in the {\it BV (area-)strict sense} if $\{Du_j\}$ converges to $Du$ in the (area-)strict sense as in Definition \ref{df:msrcnvrg}.
	\par It is well-known that smooth maps are dense in $BV(\Omega,\R^N)$ in the $BV$ area-strict sense:
	\begin{lem}\label{lem:BVdense}
	Let $\Omega \subset \R^n$ be a bounded open set without any additional regularity assumptions on $\partial \Omega$. If $u \in BV(\Omega,\R^N)$, there exists a sequence $\{u_j\} \subset W^{1,1}_u\cap C^{\infty}(\Omega,\R^N)$ such that $u_j \to u$ in the $BV$ area-strict sense. If $u \in W^{1,1}(\Omega,\R^N)$, we can further require strong convergence in $W^{1,1}(\Omega,\R^N)$.
	\end{lem}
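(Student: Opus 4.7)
The plan is to use a standard Anzellotti--Giaquinta mollification scheme, adapted to produce area-strict (rather than merely strict) convergence together with the required boundary condition. I would exhaust $\Omega$ by open sets $\Omega_k \ssubset \Omega$ with $\bigcup_k \Omega_k = \Omega$, set $U_k := \Omega_{k+1} \setminus \overline{\Omega_{k-1}}$ (with $\Omega_0 := \emptyset$), and fix a smooth partition of unity $\{\zeta_k\}$ with $\spt{\zeta_k} \ssubset U_k$, $0 \leq \zeta_k \leq 1$, and $\sum_k \zeta_k \equiv 1$ on $\Omega$. Choose mollification parameters $\varepsilon_k = \varepsilon_k(j) > 0$ small enough that $\spt{\bigl(\eta_{\varepsilon_k} \ast (\zeta_k u)\bigr)} \ssubset U_k$ and
\[ \|\eta_{\varepsilon_k}\ast(\zeta_k u) - \zeta_k u\|_{L^1} + \|\eta_{\varepsilon_k}\ast(u \otimes \nabla\zeta_k) - u\otimes\nabla\zeta_k\|_{L^1} < 2^{-k}/j, \]
and set $u_j := \sum_k \eta_{\varepsilon_k} \ast (\zeta_k u)$. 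The sum is locally finite, so $u_j \in C^\infty(\Omega,\R^N)$, and the first smallness condition gives $u_j \to u$ in $L^1$ by a telescoping argument.

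The derivative computation
\[ \nabla u_j = \sum_k \eta_{\varepsilon_k} \ast (\zeta_k \, Du) + \sum_k \bigl[\eta_{\varepsilon_k}\ast(u\otimes\nabla\zeta_k) - u\otimes\nabla\zeta_k\bigr], \]
which uses $\sum_k \nabla\zeta_k \equiv 0$, has a second sum with $L^1$-norm $O(1/j)$. For the area-strict convergence I would exploit the convexity of $E(z) = \langle z\rangle - 1$ together with $E(0) = 0$ to obtain $E(\zeta z) \leq \zeta E(z)$ for $\zeta \in [0,1]$; combined with $E^\infty = |\cdot|$ and its positive $1$-homogeneity, this gives the partition-of-unity estimate $\sum_k \int_\Omega E(\zeta_k Du) \leq \int_\Omega E(Du)$. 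Jensen's inequality applied to each mollification yields $\int E(\eta_{\varepsilon_k}\ast(\zeta_k Du)) \leq \int E(\zeta_k Du)$, and the elementary bound $E(z+w) \leq E(z) + |w|$ absorbs the cross-error terms, producing $\limsup_j \int_\Omega E(\nabla u_j) \leq \int_\Omega E(Du)$. The matching lower bound follows from the standard Reshetnyak lower semicontinuity of $\mu \mapsto \int E(\mu)$ under $L^1$ convergence (an application of Lemma \ref{lem:areaapprox}-type arguments combined with duality against $\EB_1$-functions), so $\int_\Omega E(\nabla u_j) \to \int_\Omega E(Du)$, i.e., the area-strict convergence holds.

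For the boundary-trace condition $u_j \in W^{1,1}_u$ (equivalently, in the $BV$ case, $w_j := u_j - u$ extended by zero to $\R^n$ lies in $BV(\R^n,\R^N)$ with $|Dw_j|(\partial\Omega) = 0$), the essential observation is that each individual mollification $\eta_{\varepsilon_k}\ast(\zeta_k u)$ has support compactly contained in $\Omega$; by choosing the $\varepsilon_k$ to decay fast enough relative to $\text{dist}(U_k, \partial\Omega)$, the $L^1$-boundary error of $w_j$ across shells approaching $\partial\Omega$ can be made arbitrarily small, and testing $w_j$ against vector fields in $C^1_c(\R^n,\R^{N\times n})$ yields no distributional singular contribution on $\partial\Omega$. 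Finally, if $u \in W^{1,1}(\Omega,\R^N)$, then $Du = \nabla u\,\Le^n$ is absolutely continuous and $\sum_k \eta_{\varepsilon_k}\ast(\zeta_k \nabla u) \to \nabla u$ strongly in $L^1$ by standard convolution estimates, so $u_j \to u$ strongly in $W^{1,1}$. The most delicate step is the area-strict upgrade: the key input is the convex-plus-vanishing-at-zero property $E(\zeta z) \leq \zeta E(z)$ for $\zeta \in [0,1]$, which lets the partition-of-unity bound proceed cleanly despite $E$ failing to be $1$-homogeneous, and which is the feature distinguishing this argument from the easier strict-convergence version.
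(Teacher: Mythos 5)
Your overall plan---the Anzellotti--Giaquinta partition-of-unity mollification---is the right one and in line with the reference the paper cites for this lemma. However, there is a genuine gap in your upper bound $\limsup_j \int E(\nabla u_j) \leq \int E(Du)$. Writing $\nabla u_j = \sum_k \eta_{\varepsilon_k}\ast(\zeta_k Du) + B_j$ with $\|B_j\|_{L^1} = O(1/j)$, you establish $\int E(\eta_{\varepsilon_k}\ast(\zeta_k Du))\,dx \leq \int E(\zeta_k Du)$ by Jensen, $\sum_k \int E(\zeta_k Du) \leq \int E(Du)$ by convexity and $E(0)=0$, and absorb $B_j$ via $E(z+w)\leq E(z)+|w|$. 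But to close the chain you would also need
\[ \int_\Omega E\Bigl(\textstyle\sum_k \eta_{\varepsilon_k}\ast(\zeta_k Du)\Bigr)\,dx \;\leq\; \sum_k \int_\Omega E\bigl(\eta_{\varepsilon_k}\ast(\zeta_k Du)\bigr)\,dx, \]
and this is false: $E$ is not subadditive. For instance, if $Du = z\,\Le^n$ is locally constant and two adjacent $\zeta_k$ both equal $\tfrac12$ at a point, the left side is $\approx E(z)$ while the right contributes $\approx 2E(z/2) < E(z)$ for $z\neq 0$. The estimate $E(\zeta z)\leq\zeta E(z)$ that you single out controls $\sum_k E(\zeta_k Du)$, which is exactly the \emph{wrong} quantity; it does not control $E$ of the mollified \emph{sum}. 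This is precisely where the failure of $1$-homogeneity of $E$ bites, and it is the distinction you flag at the end but do not actually resolve.

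The standard repair is to treat the whole sum as a single kernel average. At each $x$,
\[ \textstyle\sum_k \eta_{\varepsilon_k}\ast(\zeta_k Du)(x) = \int_\Omega K_x(y)\,dDu(y), \qquad K_x(y) := \textstyle\sum_k \eta_{\varepsilon_k}(x-y)\zeta_k(y) \geq 0, \]
whose effective mass $m_j(x) := \int K_x\,dy = \sum_k(\eta_{\varepsilon_k}\ast\zeta_k)(x)$ can be driven uniformly to $1$ by additionally requiring $\|\eta_{\varepsilon_k}\ast\zeta_k - \zeta_k\|_{L^\infty} \leq 2^{-k}/j$ (possible since $\|\eta_\varepsilon\ast\zeta - \zeta\|_{L^\infty} \leq \varepsilon\|\nabla\zeta\|_{L^\infty}$). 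Then apply Jensen's inequality to the convex, positively $1$-homogeneous perspective integrand $\Theta(z,t) := \sqrt{t^2+|z|^2}-t$ on $\R^{N\times n}\times[0,\infty)$, which satisfies $\Theta(z,1)=E(z)$, $\Theta(z,0)=|z|$, and $|\Theta(z,1)-\Theta(z,t)|\leq|t-1|$; together with Fubini and $\int K_x(y)\,dx = \sum_k\zeta_k(y) = 1$, this gives
\[ \int_\Omega E\Bigl(\textstyle\sum_k \eta_{\varepsilon_k}\ast(\zeta_k Du)\Bigr) dx \;\leq\; \int_\Omega E(Du) + \|m_j-1\|_{L^1(\Omega)} \;=\; \int_\Omega E(Du) + O(1/j). \]
With this replacement the rest of your argument---the Reshetnyak lower bound, the boundary condition via absolute $BV(\R^n)$-convergence of the compactly supported pieces $\eta_{\varepsilon_k}\ast(\zeta_k u)-\zeta_k u$, and the strong $W^{1,1}$ upgrade---is sound.
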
 See \cite{KriRin10b}, Lemma 1 for a proof. The following lemma allows us to approximate a map of bounded variation in energy and is helpful in various cases. See Theorem 4 in \cite{KriRin10a} for the details.
	\begin{lem}\label{lem:intapprox}
	Suppose that $G\colon \R^{N\times n}\to \R$ is rank-one convex and of linear growth. If $\Omega \subset \R^n$ is a bounded Lipschitz domain, $u_j,u\in BV(\Omega,\R^N)$ and $u_j \to u$ in the $BV$ area-strict sense, then 
		\begin{equation}\label{eq:intapprox}
		\int_{\Omega}G(Du_j) \to \int_{\Omega}G(Du) \quad \mbox{as }j\to \infty.
		\end{equation}
	\end{lem}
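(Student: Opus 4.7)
The plan is to reduce the assertion to the generalised Reshetnyak continuity theorem recalled at the end of Subsection~\ref{subsec:measure}, which yields the analogous convergence for integrands in $\EB_1(\RNn)$. The core obstruction is that a rank-one convex $G$ of linear growth need not lie in $\EB_1$: the limit $\lim_{t\to\infty}G(tz)/t$ may fail to exist uniformly in $z$, so $G$ need not possess a regular recession function on the whole of $\RNn$.

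I would first record two preparatory facts. (i) A rank-one convex $G\colon\RNn\to\R$ of linear growth is globally Lipschitz, since its restriction to each rank-one line is a convex function dominated by a linear function. (ii) By Alberti's rank-one theorem, for every $u\in BV(\Omega,\R^N)$ the polar $\frac{\id D^s u}{\id \lvert D^s u\rvert}$ is a matrix of rank at most one at $\lvert D^s u\rvert$-a.e.\ point. On the rank-one cone the recession $G^{\infty}(\xi):=\lim_{t\to\infty}G(t\xi)/t$ does exist (by the one-dimensional convexity of $t\mapsto G(t\xi)$), is positively $1$-homogeneous and Lipschitz; hence by~(ii) the right-hand side of~(\ref{eq:flrelaxed}) is unambiguous when evaluated on $Du_j$ and on $Du$.

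I would then construct a sequence $G_k\in\EB_1(\RNn)$ of smooth rank-one convex integrands such that $G_k\to G$ locally uniformly on $\RNn$, $\lvert G_k(z)\rvert \le C(1+\lvert z\rvert)$ uniformly in $k$, and $G_k^{\infty}\to G^{\infty}$ uniformly on compact subsets of the rank-one cone; this can be achieved by mollifying $G$ combined with a radial cut-and-paste near infinity, using~(i) to keep the Lipschitz constants under control. For each fixed $k$ the Reshetnyak-type theorem gives
\[ \int_{\Omega} G_k(Du_j) \longrightarrow \int_{\Omega} G_k(Du) \qquad \text{as } j\to\infty. \]
To interchange the limits $k\to\infty$ and $j\to\infty$ I would exploit the uniform total-variation bound $\sup_j \lvert Du_j\rvert(\Omega) <\infty$ coming from area-strict convergence together with~(ii) applied to each $u_j$: the polars of $D^s u_j$ all lie in the rank-one cone where $G_k^{\infty}\to G^{\infty}$ uniformly, while the Lipschitz control from~(i) handles the absolutely continuous parts. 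A standard $\varepsilon/3$ argument then closes the proof.

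The hard part is the compatible construction of the approximants $G_k$: one must simultaneously arrange uniform control of $G_k-G$ on compact subsets of $\RNn$ (for the absolutely continuous contribution) and of $G_k^{\infty}-G^{\infty}$ on the rank-one cone (for the singular contribution), all the while preserving rank-one convexity and the linear growth constant. Once this is in hand, the remainder is a straightforward combination of Alberti's rank-one theorem with the generalised Reshetnyak theorem already at our disposal.
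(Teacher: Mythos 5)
Your strategy of reducing to the generalised Reshetnyak continuity theorem by approximating $G$ with integrands in $\EB_1(\RNn)$ is the right one, and your preparatory observations (global Lipschitz continuity of a rank-one convex $G$ of linear growth; existence of the recession on the rank-one cone by one-dimensional convexity; Alberti's rank-one theorem to locate the singular parts) are correct. However, the $\varepsilon/3$ interchange of limits has a genuine gap, located precisely where you flag ``the hard part.'' For the absolutely continuous contributions, local uniform convergence $G_k \to G$ together with $\sup_j \|\nabla u_j\|_{L^1}<\infty$ does \emph{not} give $\sup_j \int_\Omega |(G_k - G)(\nabla u_j)|\,dx \to 0$: the family $\{\nabla u_j\}$ is not equiintegrable under area-strict convergence (mass of $\nabla u_j$ typically escapes to infinity and reappears as singular mass of $Du$). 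A uniform Lipschitz bound merely gives $|(G_k-G)(z)| \lesssim |z|$ on the escaping region, which is of the same order as the offending integral; and a global bound $|G_k-G|\le \varepsilon_k(1+|z|)$ is impossible whenever $G\notin\EB_1$, since $G(tz)/t$ then fails to converge for some non-rank-one $z$, and no $G_k\in\EB_1$ can track that oscillation within $\varepsilon_k$ uniformly in $t$.

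The way around this is a one-sided sandwich that avoids any uniform-in-$j$ estimate. Construct monotone approximants $G_k^{\pm}\in\EB_1(\RNn)$ with $G_k^{-}\le G\le G_k^{+}$, $G_k^{\pm}\to G$ pointwise, and $(G_k^{\pm})^{\infty}\to G^{\infty}$ on the rank-one cone. For each fixed $k$, Reshetnyak gives $\int G_k^{+}(Du_j)\to\int G_k^{+}(Du)$, whence $\limsup_j\int G(Du_j)\le\int G_k^{+}(Du)$; sending $k\to\infty$ is then a dominated/monotone convergence statement about the fixed measure $Du$, with Alberti ensuring $(G_k^{\pm})^{\infty}$ need only approach the genuine recession limit $G^{\infty}$ on the rank-one cone where $D^s u$ is concentrated. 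The lower bound is symmetric, and together they give the claimed convergence. Two smaller remarks: rank-one convexity of the approximants $G_k$ is not needed (Reshetnyak requires only $\EB_1$ membership) and would in any case be destroyed by the radial cut-and-paste; and the uniform convergence $(G_k)^{\infty}\to G^{\infty}$ on the rank-one unit sphere that you invoke is best seen via Dini's theorem, using that $t\mapsto (G(t\xi)-G(0))/t$ is non-decreasing for rank-one $\xi$ and that $G^{\infty}$ is Lipschitz (hence continuous) there.
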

	\par The two lemmas above give a direct corollary:
	\begin{lem}\label{lem:intapprox1}
	Suppose that $\Omega \subset\R^n$ is a bounded Lipschitz domain. For any $u\in BV(\Omega,\R^N)$, there exists a sequence $\{u_j\}\subset W^{1,1}_u\cap C^{\infty}(\Omega,\R^N)$ such that $u_j\to u$ in the $BV$ area strict sense. Furthermore, for any function $G\colon \R^{N\times n}\to \R$ that is rank-one convex and of linear growth, we have
		\begin{equation}
		\int_{\Omega}G(Du_j) \to \int_{\Omega}G(Du) \quad \mbox{as }j\to \infty.
		\end{equation}
	\end{lem}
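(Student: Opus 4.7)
The plan is to simply concatenate the two preceding lemmas. First, I would invoke Lemma \ref{lem:BVdense} (whose statement places no regularity hypothesis on $\partial \Omega$ beyond boundedness) to produce a sequence $\{u_j\} \subset W^{1,1}_u \cap C^{\infty}(\Omega,\R^N)$ converging to $u$ in the $BV$ area-strict sense. This yields the first assertion of the lemma directly, without using the Lipschitz property of $\Omega$.

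Next, to obtain the convergence of energies, I would feed this sequence into Lemma \ref{lem:intapprox}. Its three hypotheses are all in place: $\Omega$ is a bounded Lipschitz domain by assumption, $G$ is rank-one convex and of linear growth by assumption, and the $BV$ area-strict convergence $u_j \to u$ was established in the previous step. The conclusion \eqref{eq:intapprox} of Lemma \ref{lem:intapprox} is then exactly the desired limit
\[
\int_{\Omega} G(Du_j) \longrightarrow \int_{\Omega} G(Du).
\]

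There is no real obstacle here; the only conceptual point worth emphasising is that the two ingredients have complementary hypotheses on $\Omega$. Density holds on any bounded open set, while the area-strict continuity of $G$-energies is only available on Lipschitz domains, so the Lipschitz assumption in the present statement is inherited entirely from Lemma \ref{lem:intapprox} and is what ties the two results together into the stated corollary.
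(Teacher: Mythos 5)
Your argument is exactly the paper's: the statement is presented there as a direct corollary of Lemma \ref{lem:BVdense} (density of smooth $W^{1,1}_u$ maps in the area-strict sense on any bounded open set) combined with Lemma \ref{lem:intapprox} (area-strict continuity of the $G$-energy on Lipschitz domains). Your observation that the Lipschitz hypothesis is inherited solely from the second ingredient is a correct and useful reading of why the hypotheses are stated as they are.
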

	\begin{rmk}\label{rmk:intapprox}
	Notice that $E(\cdot -z_0)$ for any $z_0\in \R^{N\times n}$ is convex by (\ref{eq:Ediffest}), and thus rank-one convex. Obviously it is of linear growth and then Lemma \ref{lem:intapprox1} applies to $E(\cdot -z_0)$. The lemma also holds for functions satisfying \ref{it:intqc} with $p=1$ as quasiconvexity implies rank-one convexity.
	\end{rmk}
	\par The next result is a Fubini-type property for $BV$ maps. It involves $BV$ maps on submanifolds of $\R^n$, which are well-defined by local charts and partitions of unity. In our case, we only consider $(n-1)$-spheres, which can be covered by two local charts that correspond to the stereographic projections from two antipodal points. The two charts are taken to be such that they both correspond to a bounded open subset of $\R^{n-1}$, over which the induced metric is comparable to the natural one on $\R^{n-1}$. Thus, we can apply various results for maps defined on (open subsets of) $\R^{n-1}$. For a $BV$ map $u \colon \partial B \to \R^N$, we denote its {\it tangential approximate gradient} by $\nabla_{\tau}u$, which exists $\mathcal{H}^{n-1}$-almost everywhere on $\partial B$. Its {\it tangential distributional derivative} is denoted by $D_{\tau}u$. Indeed, the former is the absolutely continuous part of the latter with respect to $\mathcal{H}^{n-1}\mres \partial B$, and the two coincide when $u \in W^{1,p}(\partial B,\R^N)$ with $p\geq 1$.
	\begin{lem}\label{lem:BVFubini}
	Let $B_R$ denote a ball $B(x_0,R) \subset \R^n$ and $u$ be a map in $BV(B_R,\R^N)$. Then for $\Le^1$-almost every $\rho\in (0,R)$, the pointwise restriction $u\vert_{\partial B_{\rho}}$ coincides with the traces of $u$ from $B_{\rho}$ and $B\setminus \bar{B}_{\rho}$, and is in $BV(\partial B_{\rho},\R^N)$. For any two radii $r_1, r_2$ with $0<r_1<r_2<R$, we can find $\rho \in (r_1,r_2)$ such that the above holds and the total variation of $u\vert_{\partial B_{\rho}}$ on $\partial B_{\rho}$ is bounded by that of $u$:
		\begin{equation}\label{eq:BVFubini}
		\int_{\partial B_{\rho}} \lvert D_{\tau}(u\vert_{\partial B_{\rho}})\rvert \leq \frac{C(n,N)}{r_2-r_1}\int_{B_{r_2}\setminus \bar{B}_{r_1}}\lvert Du\rvert.
		\end{equation}
	\end{lem}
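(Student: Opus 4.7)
The plan is to establish the inequality first for smooth maps via Fubini in polar coordinates, and then pass to general $BV$ maps by approximation, using lower semicontinuity of the $BV$ seminorm on the sphere and area-strict continuity of the total variation. A standard mean-value argument at the end yields the good radius $\rho$.

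First I would record that for $u\in BV(B_R,\R^N)$ there are at most countably many radii $\rho$ such that $|Du|(\partial B_\rho)>0$. For every other $\rho\in(0,R)$, $BV$ trace theory on the Lipschitz domains $B_\rho$ and $B_R\setminus\bar B_\rho$ yields interior and exterior traces of $u$ on $\partial B_\rho$ that coincide $\mathcal{H}^{n-1}$-a.e.\ with the pointwise restriction of the precise representative $\tilde u$. Applying Lemma \ref{lem:BVdense}, pick a sequence $\{u_j\}\subset C^\infty\cap W^{1,1}_u(B_R,\R^N)$ converging to $u$ in the $BV$ area-strict sense; by Fubini in polar coordinates, $\int_0^R\|u_j\vert_{\partial B_\rho}-u\vert_{\partial B_\rho}\|_{L^1(\partial B_\rho)}\,d\rho\to0$, so after extracting a subsequence (not relabelled) the traces converge in $L^1(\partial B_\rho,\R^N)$ for $\mathcal{L}^1$-a.e.\ $\rho$.

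For the smooth approximations, the pointwise orthogonal decomposition of $\nabla u_j$ into tangential and radial parts gives $|\nabla_\tau u_j|\leq|\nabla u_j|$ everywhere, and a Fubini computation in polar coordinates yields
\begin{equation*}
\int_{r_1}^{r_2}\int_{\partial B_\rho}|\nabla_\tau u_j|\,d\mathcal{H}^{n-1}\,d\rho\;\leq\;\int_{B_{r_2}\setminus \bar B_{r_1}}|\nabla u_j|\,dx.
\end{equation*}
Since only countably many radii are bad, we may assume $|Du|(\partial B_{r_1})=|Du|(\partial B_{r_2})=0$, so area-strict convergence (which implies strict convergence) combined with a portmanteau-type argument gives $\int_{B_{r_2}\setminus\bar B_{r_1}}|\nabla u_j|\,dx\to|Du|(B_{r_2}\setminus\bar B_{r_1})$. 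On the left, by Fatou and lower semicontinuity of the tangential $BV$ seminorm on $\partial B_\rho$ with respect to $L^1(\partial B_\rho)$-convergence (established by pulling back to $\R^{n-1}$ through the two stereographic charts described in the statement, where the induced metric is comparable to the Euclidean one up to a constant $C(n,N)$),
\begin{equation*}
\int_{r_1}^{r_2}|D_\tau(u\vert_{\partial B_\rho})|(\partial B_\rho)\,d\rho\;\leq\;C(n,N)\,|Du|(B_{r_2}\setminus\bar B_{r_1}).
\end{equation*}
In particular the left-hand integrand is finite for $\mathcal{L}^1$-a.e.\ $\rho$, so $u\vert_{\partial B_\rho}\in BV(\partial B_\rho,\R^N)$ for $\mathcal{L}^1$-a.e.\ $\rho$; a mean-value argument selects $\rho\in(r_1,r_2)$ with the required bound $\int_{\partial B_\rho}|D_\tau(u\vert_{\partial B_\rho})|\leq\frac{C(n,N)}{r_2-r_1}|Du|(B_{r_2}\setminus\bar B_{r_1})$.

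The main technical obstacle is the passage to the limit on the left-hand side: one needs both that the traces converge in $L^1(\partial B_\rho)$ for almost every $\rho$ (handled via the Fubini/subsequence argument above applied to the strict convergence $u_j\to u$) and that the $BV$ seminorm is lower semicontinuous on the sphere, which I would handle by transporting the problem to Euclidean charts where standard $L^1$-lower semicontinuity of $|D\,\cdot\,|$ is available. The comparability of the induced metric to the flat one on those charts is exactly what produces the dimensional constant $C(n,N)$; the rest of the proof is routine.
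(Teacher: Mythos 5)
Your argument is correct and follows the natural route: area-strict approximation by smooth maps (Lemma \ref{lem:BVdense}), the pointwise bound $|\nabla_\tau u_j|\le|\nabla u_j|$ combined with the coarea/Fubini identity on the annulus, and a passage to the limit using strict convergence on the right and Fatou plus $L^1$-lower semicontinuity of the spherical $BV$-seminorm (via charts) on the left; the paper itself gives no proof here but simply cites Lemma 2.3 of \cite{KriGm19}, which is proved along essentially these lines. One small point to tighten: $r_1$ and $r_2$ are given, so you cannot simply ``assume'' $|Du|(\partial B_{r_1})=|Du|(\partial B_{r_2})=0$. Instead apply the limiting argument on annuli $B_{r_2'}\setminus\bar B_{r_1'}$ with good radii $r_1<r_1'<r_2'<r_2$ (available since the bad set is countable), then let $r_1'\downarrow r_1$, $r_2'\uparrow r_2$; monotone convergence on the left and monotonicity of $|Du|$ on the right yield
\begin{equation*}
\int_{r_1}^{r_2}\int_{\partial B_\rho}\lvert D_\tau(u\vert_{\partial B_\rho})\rvert\,d\rho \;\leq\; C(n,N)\,\lvert Du\rvert(B_{r_2}\setminus\bar B_{r_1}),
\end{equation*}
after which the mean-value selection of $\rho\in(r_1,r_2)$ goes through exactly as you indicate. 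It would also be worth spelling out that $|Du|(\partial B_\rho)=0$ forces $\mathcal{H}^{n-1}(J_u\cap\partial B_\rho)=0$ (since $|D^ju|\geq |u^+-u^-|\,\mathcal{H}^{n-1}\mres J_u$ with $u^+\neq u^-$ on $J_u$), which is the reason the one-sided traces agree with the precise representative $\mathcal{H}^{n-1}$-a.e.\ on $\partial B_\rho$.
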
 This lemma is Lemma 2.3 in \cite{KriGm19} and allows us to work on those balls over the boundary of which a $BV$ map has nice properties. To see this, we recall the definition of fractional Sobolev spaces. Let $X$ be an embedded $d$-submanifold ($d\leq n$) of $\R^n$ and $s\in(0,1)$, $r\in (1,\infty)$. The space $W^{s,r}(X,\R^N)$ consists of maps $u \colon X\to \R^N$ of which the Gagliardo norm \[ \|u\|_{W^{s,r}(X)} = (\|u\|^r_{L^r(X)}+ [u]^r_{W^{s,r}(X)})^{\frac{1}{r}},\] is finite. The semi-norm is defined by \[[u]^r_{W^{s,r}(X)}:= \int_{X}\int_X \frac{\lvert u(x)-u(y)\rvert^r}{\lvert x-y\rvert^{d+sr}}\id \mathcal{H}^d(x)\id \mathcal{H}^d(y),\]
	\begin{lem}\label{lem:BVbdry}
	Let $B$ be a ball $B(x_0,R) \subset \R^n$ and $v \in BV(\partial B,\R^N)$. Then we have $v\in W^{1-\frac{1}{r},r}(\partial B,\R^N)$ and 
		\begin{equation}
		\left(\fint_{\partial B}\int_{\partial B} \frac{\lvert v(x)-v(y)\rvert^r}{\lvert x-y\rvert^{n+r-2}}\id \mathcal{H}^{n-1}(x)\id \mathcal{H}^{n-1}(y)\right)^{\frac{1}{r}} \leq CR^{\frac{1}{r}}\fint_{\partial B}\lvert D_{\tau}v\rvert,
		\end{equation} where $C=C(n,N,r)>0$. The range of $r$ depends on the dimension: \[  \left\{\begin{aligned}
		&r = \frac{n}{n-1},& &n\geq 3\\
		&r\in (1,2),& &n=2.
		\end{aligned} \right. \]
	\end{lem}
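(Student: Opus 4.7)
The lemma asserts a scale-invariant continuous embedding $BV(\partial B, \R^N) \hookrightarrow W^{1-1/r, r}(\partial B, \R^N)$; the strategy is to reduce it to a Euclidean fractional Sobolev inequality in $\R^{n-1}$. My plan, in order, is: approximation by smooth maps, scale reduction to $R=1$, and transfer through the two stereographic charts mentioned in the excerpt.

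For the approximation, I would mollify $v$ separately in each of the two stereographic charts and glue with a partition of unity to produce $v_j \in C^\infty(\partial B, \R^N)$ with $v_j \to v$ in $L^1(\partial B, \R^N)$ and $\int_{\partial B} \lvert \nabla_\tau v_j\rvert \, d\mathcal{H}^{n-1} \to \lvert D_\tau v\rvert(\partial B)$. Passing to a subsequence with $v_j \to v$ pointwise $\mathcal{H}^{n-1}$-a.e., Fatou bounds the left-hand side for $v$ by the $\liminf_j$ of the same integral for $v_j$, while the right-hand side converges by strict convergence; the inequality thereby reduces to smooth $v$. For the scaling, setting $\tilde v(\tilde x) := v(R \tilde x)$ and tracking how $\lvert x-y\rvert^{n+r-2}$, $d\mathcal{H}^{n-1}$, the normalising factor $\mathcal{H}^{n-1}(\partial B)^{-1}$ and the tangential gradient all transform, both sides scale as exactly $R^{1/r - 1}$; hence we may assume $R = 1$.

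For the transfer, first subtract the spherical mean of $v$, which leaves the left-hand side unchanged and, by Poincar\'{e}-Sobolev on $\partial B_1$, yields $\|v\|_{L^r(\partial B_1)} \leq C \|\nabla_\tau v\|_{L^1(\partial B_1)}$ (valid since $r \leq (n-1)/(n-2)$ for $n \geq 3$, trivially so for $n = 2$). Fix a smooth partition of unity $\eta_1 + \eta_2 \equiv 1$ subordinate to the chart images $V_1, V_2 \subset \partial B_1$, with each $\mathrm{supp}(\eta_i) \subsetneq V_i$, and decompose $v = v_1 + v_2$ with $v_i := \eta_i v$. Using $(a+b)^r \leq 2^{r-1}(a^r+b^r)$, the left-hand side splits into contributions from $v_1$ and $v_2$ separately. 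For each $v_i$, the piece over $V_i \times V_i$ pulls back under $\phi_i^{-1}$ to the corresponding integral on $\R^{n-1}$, while pairs with $y \notin V_i$ contribute at most $\delta^{-(n+r-2)} \|v_i\|_{L^r}^r \mathcal{H}^{n-1}(\partial B_1) \leq C \|\nabla_\tau v\|_{L^1}^r$, using the positive distance $\delta$ between $\mathrm{supp}(v_i)$ and $\partial B_1 \setminus V_i$.

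The main obstacle is the resulting Euclidean fractional Sobolev inequality
\begin{equation*}
\int_{\R^{d}} \int_{\R^{d}} \frac{\lvert w(x) - w(y)\rvert^r}{\lvert x-y\rvert^{d+r-1}} \, dx \, dy \leq C \|\nabla w\|_{L^1(\R^d)}^r, \qquad d = n - 1,
\end{equation*}
applied to compactly supported $w = v_i \circ \phi_i \in C^1_c(\R^d, \R^N)$. For $d = 1$ this is direct: $W^{1,1}(\R) \hookrightarrow L^\infty(\R)$ gives $\lvert w(x) - w(y)\rvert^r \leq \|w'\|_{L^1}^{r-1} \lvert w(x) - w(y)\rvert$, and Fubini combined with $\lvert w(x)-w(y)\rvert \leq \int_{[x,y]} \lvert w'\rvert$ reduces matters to $\int_0^L \int_0^L \lvert x-y\rvert^{-r}\,dx\,dy < \infty$, which holds precisely when $r < 2$. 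For $d \geq 2$, the value $r = (d+1)/d$ sits exactly at the endpoint of the classical Besov-type embedding $BV(\R^d) \hookrightarrow B^{1-1/r}_{r,\infty}$; at this endpoint one must invoke either the Lorentz-Sobolev refinement $BV(\R^d) \hookrightarrow L^{d/(d-1), 1}$ together with a real-interpolation argument, or the trace theory of $W^{1,1}(\R^{d+1}_+)$ restricted to the hyperplane $\{x_{d+1} = 0\}$.
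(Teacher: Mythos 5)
The paper itself does not prove this lemma; it supplies only citations (\cite{BBM04}, Lemma D.1 for $n\geq 3$, and \cite{Tartar07}, \cite{Triebel83} for $n=2$). Your proposal is therefore an attempt at a self-contained proof, and the scaffolding is sound: mollification in charts with Fatou plus strict convergence to reduce to smooth $v$, the scaling check showing both sides behave as $R^{1/r-1}$, subtraction of the mean, the partition of unity with the far-pair tail handled via the positive distance $\delta$, and the pullback to a Euclidean inequality for compactly supported $w\in C^1_c(\R^{n-1})$. Your $n=2$ argument also goes through once one notes that, after localising to bounded chart images, the Fubini step requires only $\int_0^L\int_0^L\lvert x-y\rvert^{-r}\,dx\,dy<\infty$, which holds precisely for $r<2$.

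The gap is the $d=n-1\geq 2$ endpoint inequality, which you identify as the main obstacle but leave to two suggested routes, neither of which works as stated. The Gagliardo trace theorem at $p=1$ gives $\mathrm{Tr}\,W^{1,1}(\R^{d+1}_+)=L^1(\R^d)$, not a fractional Sobolev space (the fractional trace class $W^{1-1/p,p}$ only appears for $p>1$), so trace theory of $W^{1,1}$ cannot produce $W^{1-1/r,r}$. As for the Lorentz refinement $BV(\R^d)\hookrightarrow L^{d/(d-1),1}(\R^d)$: inserting it into the $K$-functional bound
\[
K\bigl(t,\,T_hw;\,L^1,\,L^{d/(d-1),1}\bigr)\ \leq\ \min\bigl(\lvert h\rvert\,\lVert\nabla w\rVert_1,\ c\,t\,\lVert\nabla w\rVert_1\bigr),
\qquad T_hw:=w(\cdot+h)-w,
\]
and real-interpolating to $L^{r,q}$ (for any $q$, with $\theta=1/r$) returns $\lVert T_hw\rVert_{L^r}\lesssim \lvert h\rvert^{1/(d+1)}\lVert\nabla w\rVert_1$ --- the same power as naive $L^1$--$L^{d/(d-1)}$ interpolation --- so the integral $\int_{\lvert h\rvert<L}\lvert h\rvert^{1-d-r}\lVert T_hw\rVert_{L^r}^r\,dh$ is still majorised only by the divergent $\int_{\lvert h\rvert<L}\lvert h\rvert^{-d}\,dh$. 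In Besov terms your interpolation yields $BV\hookrightarrow B^{1/(d+1)}_{r,\infty}$, whereas the lemma asserts the strictly stronger $B^{1/(d+1)}_{r,r}=W^{1-1/r,r}$; upgrading the third index from $\infty$ to $r$ at this critical line is exactly the non-trivial content of \cite{BBM04}, Lemma D.1, and requires a genuinely different (coarea/covering) argument. Without that step, the $n\geq 3$ case of the lemma is not established.
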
 This lemma is a corollary of several embedding results. See \cite{BBM04}, Lemma D.1 for $n\geq 3$, and \cite{Tartar07}, Lemma 38.1 and \cite{Triebel83}, \textsection 3.3.1  for $n=2$. There is also a discussion after Lemma 2.4 in \cite{KriGm19}.
		
\subsection{Estimates for elliptic systems}\label{subsec:elliptic}
	\par We need some results on Legendre-Hadamard elliptic systems. A bilinear form $\A \in\bigodot^2(\RNn)$ is said to satisfy the {\it strong Legendre-Hadamard condition} if there exists $\lambda, \Lambda >0$ such that 
		\begin{equation}\label{eq:LHcondition}
		\left\{\begin{aligned}
		&\A[\eta\otimes \xi,\eta\otimes \xi] \geq \lambda \lvert \eta\rvert^2\lvert \xi\rvert^2,&  &\mbox{for any }\eta\in \R^N, \xi \in \R^n,\\
		&\A[z,z]\leq \Lambda \lvert z\rvert^2, & & \mbox{for any }z \in \R^{N\times n}.
		\end{aligned} \right.
		\end{equation}  
	\par We say that $u$ is $\A$-harmonic in some open set $\Omega$ if it satisfies 
		\begin{equation}\label{eq:ellipsys}
		-\mbox{div}(\A\nabla u)=0
		\end{equation}  in the distributional sense in $\Omega$.
	\begin{lem}\label{lem:ellip1}
	Suppose that $\A\in \bigodot^2(\R^{N\times n})$ satisfies \textnormal{(\ref{eq:LHcondition})} with some $\Lambda, \lambda>0$. If $h \in W^{1,1}(B_R,\R^N)$ is $\A$-harmonic in the ball $B_R=B(x_0,R) \subset \R^n$, then $h$ is in $C^{\infty}(B_R,\R^N)$, and for any $z\in \R^{N\times n}$ and some $c_a=c_a(n,N,\frac{\Lambda}{\lambda})>0$ we have
		\begin{equation}\label{eq:ellipest1}
		\sup_{B_{\frac{R}{2}}}\lvert \nabla h-z\rvert+R\sup_{B_{\frac{R}{2}}}\lvert \nabla^2 h\rvert \leq c_a \fint_{B_R}\lvert \nabla h-z\rvert\id x.
		\end{equation}
	\end{lem}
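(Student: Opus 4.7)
The plan is first to upgrade $h$ to be smooth, and then to derive the pointwise bounds from the standard $L^{2}$ interior theory for constant-coefficient elliptic systems, with a final $L^{2}\to L^{1}$ absorption step. Because $\A$ has constant entries, the operator $L:=-\mathrm{div}(\A\nabla\,\cdot\,)$ is a constant-coefficient linear system whose principal symbol, by the Legendre-Hadamard condition \eqref{eq:LHcondition}, is a positive-definite linear endomorphism of $\R^{N}$ for every $\xi\in\R^{n}\setminus\{0\}$. Hence $L$ is classically elliptic and in particular hypoelliptic, so any distributional solution $h\in W^{1,1}(B_{R},\R^{N})$ automatically lies in $C^{\infty}(B_{R},\R^{N})$. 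A concrete route to the same conclusion is mollification: since $L$ commutes with convolution, $h_{\varepsilon}:=h*\phi_{\varepsilon}$ is $\A$-harmonic and smooth on $B_{R-\varepsilon}$, and the a priori bounds derived below are uniform in $\varepsilon$ and pass to the limit.

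Assuming from now on that $h$ is smooth, I would set $\tilde h(x):=h(x)-zx$, which remains $\A$-harmonic because every affine map is annihilated by $L$. The claim then reads
\[
 \sup_{B_{R/2}}\abs{\nabla\tilde h}+R\sup_{B_{R/2}}\abs{\nabla^{2}\tilde h}\leq c_{a}\fint_{B_{R}}\abs{\nabla\tilde h}\id x.
\]
Differentiating $Lh=0$ shows every partial derivative of $\tilde h$ is still $\A$-harmonic, so it is enough to establish, for every smooth $\A$-harmonic $v\colon B_{R}\to\R^{N}$, the interior estimate
\[
 \sup_{B_{R/2}}\abs{v}+R\sup_{B_{R/2}}\abs{\nabla v}\leq C\fint_{B_{R}}\abs{v}\id x,
\]
and then apply it column-wise to $w=\nabla\tilde h$ and $\nabla w=\nabla^{2}\tilde h$. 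Testing $Lv=0$ against $\eta^{2}(v-c)$ with a cut-off $\eta$ and using \eqref{eq:LHcondition} yields the Caccioppoli inequality
\[
 \int_{B_{\rho}}\abs{\nabla v}^{2}\id x\leq \frac{C(\Lambda/\lambda)}{(r-\rho)^{2}}\int_{B_{r}}\abs{v-c}^{2}\id x
 \quad\text{for }\rho<r\leq R;
\]
iterating it on successive derivatives (still $\A$-harmonic) together with Sobolev embedding produces the $L^{2}$-based pointwise bound
\[
 \sup_{B_{3R/4}}\abs{v}+R\sup_{B_{3R/4}}\abs{\nabla v}\leq C\Bigl(\fint_{B_{R}}\abs{v}^{2}\id x\Bigr)^{1/2}.
\]

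The remaining step, which I expect to be the main technical point, is the upgrade from the $L^{2}$ to the $L^{1}$ average on the right-hand side. Setting $\Phi(\rho):=\sup_{B_{\rho}}\abs{v}$ on $[R/2,R]$, I would interpolate
\[
 \Bigl(\fint_{B_{r}}\abs{v}^{2}\id x\Bigr)^{1/2}\leq \Phi(r)^{1/2}\Bigl(\fint_{B_{r}}\abs{v}\id x\Bigr)^{1/2},
\]
apply Young's inequality to absorb a small multiple of $\Phi(r)$ into the left-hand side of the $L^{2}$ bound on nested balls $B_{\rho}\subset B_{r}\subset B_{R}$, and then invoke a standard iteration lemma on the monotone function $\Phi$ to conclude $\Phi(R/2)\leq C\fint_{B_{R}}\abs{v}\id x$. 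Feeding this back into the gradient estimate completes the proof with $c_{a}=c_{a}(n,N,\Lambda/\lambda)$. The other subtleties --- the initial $W^{1,1}$-only regularity and the reduction to the case $z=0$ --- are handled painlessly by the mollification and affine-translation arguments above.
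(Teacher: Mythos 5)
Your proposal is correct and follows essentially the same classical route as the references the paper cites for this lemma (Giaquinta, \S III.2 and Giusti, \S 7.2): hypoellipticity (or mollification) to get smoothness, translation $h\mapsto h-zx$ to reduce to $z=0$, Caccioppoli plus iterated differentiation and Sobolev embedding for the $L^2$-based sup bound, and finally the interpolation--Young--iteration absorption to replace the $L^2$ average by the $L^1$ average. The one place where the sketch is slightly glib is the line ``Testing $Lv=0$ against $\eta^{2}(v-c)\ldots$ and using \eqref{eq:LHcondition} yields the Caccioppoli inequality'': for systems the Legendre--Hadamard condition does not give \emph{pointwise} coercivity $\A[\nabla v,\nabla v]\geq\lambda|\nabla v|^2$, so one needs the G\aa rding inequality, which for constant $\A$ follows exactly (via Plancherel applied to $w=\eta(v-c)\in W^{1,2}_0$); this should be spelled out, but it is a standard one-line fix rather than a gap.
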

	\par This lemma is classical and obtained with, for example, the results in \cite{Gia83}, \textsection III.2 and \cite{Giusti03}, \textsection 7.2. 
	The next result is also classical, and see Proposition 2.11 in \cite{KriGm19} and the references therein for a proof.
	\begin{lem}\label{lem:ellip2}
	Suppose that $\A\in \bigodot^2(\R^{N\times n})$ satisfies \textnormal{(\ref{eq:LHcondition})} with some $\Lambda, \lambda>0$. Let $r\in (1,\infty)$, $q\in [2,\infty)$ and $B$ be the unit ball in $\R^n$.
		\begin{enumerate}[label=\textup{(\alph*)}]
		\item\label{it:ellipDiri} For any $g\in W^{1-\frac{1}{r},r}(\partial B,\R^N)$, the elliptic system 
			\begin{equation}\label{eq:elliptic1}
			\left\{\begin{aligned}
			&-\mathrm{div}(\A\nabla h)=0,& &\mathrm{in }\ B\\
			&h\vert_{\partial B}=g,& &\mathrm{on }\ \partial B
			\end{aligned} \right.
			\end{equation} admits a unique solution $h\in W^{1,r}(B,\R^N)$, and there exists $C=C(n,N,r, \frac{\Lambda}{\lambda})>0$ such that \[ \|h\|_{W^{1,r}(B,\,\R^N)} \leq C \norm{g}_{W^{1-\frac{1}{r},r}(\partial B,\,\R^N)}. \]
		\item For any $f\in L^q(B,\R^N)$, the elliptic system 
			\begin{equation}\label{eq:elliptic2}
			\left\{\begin{aligned}
			&-\mathrm{div}(\A\nabla w)=f,& &\mathrm{in }\ B\\
			&w\vert_{\partial B}=0,& &\mathrm{on }\ \partial B
			\end{aligned} \right.
			\end{equation} admits a unique solution $w\in W^{2,q}\cap W^{1,q}_0(B,\R^N)$, and there exists $C=C(n,N,q, \frac{\Lambda}{\lambda})>0$ such that \[ \|w\|_{W^{2,q}(B,\,\R^N)} \leq C \|f\|_{L^q(B,\,\R^N)}. \]
		\end{enumerate}
	\end{lem}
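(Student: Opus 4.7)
The plan is to start from the $L^2$ theory and bootstrap using the fact that $\A$ has constant coefficients. For the basic $W^{1,2}$ solvability, the Plancherel identity, the symmetry of $\A$ and the rank-one part of \eqref{eq:LHcondition} yield $\int_{\R^n} \A[\nabla u,\nabla u]\id x \geq \lambda \int_{\R^n} |\nabla u|^2 \id x$ for $u \in C^\infty_c(\R^n,\R^N)$ (the Fourier symbol $L(\xi)^{\alpha\beta} := \A_{ij}^{\alpha\beta}\xi_i\xi_j$ is a positive-definite $N\times N$ matrix of size $\sim \lambda|\xi|^2$ by the strong Legendre-Hadamard condition together with the symmetry of $\A$ in the outer matrix indices). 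By density this coercivity persists on $W^{1,2}_0(B,\R^N)$, so Lax-Milgram provides unique weak solutions in $W^{1,2}_0(B)$ for part (b) with $f \in L^2$ and, after lifting the boundary datum to the interior, in $W^{1,2}(B)$ for part (a) with $g\in W^{1/2,2}(\partial B)$.

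For the $L^q$ upgrade in (b) I would proceed in two stages. Interior regularity and the estimate $\|\nabla^2 w\|_{L^q(B')}\leq C(\|f\|_{L^q(B)}+\|w\|_{L^q(B)})$ on $B' \ssubset B$ follow, thanks to the constancy of the coefficients, from differentiating the equation, cutting off, and applying a Mihlin-type multiplier theorem to the operator $(-\mathrm{div}(\A\nabla \cdot))^{-1}$ on $\R^n$, whose symbol $L(\xi)^{-1}$ is smooth and homogeneous of degree $-2$ away from the origin. For regularity up to $\partial B$ I would flatten a boundary patch via a smooth chart, reflect across the resulting hyperplane to reduce to the whole-space case, or alternatively invoke the Agmon-Douglis-Nirenberg theory directly. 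A finite cover of $\bar B$ by such patches combined with Poincar\'e's inequality (to absorb the lower-order $\|w\|_{L^q}$ term) then produces the global $W^{2,q}$ estimate.

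For part (a) I would reduce to the version of (b) with right-hand side in a negative Sobolev space. By the classical trace-extension theory there is an extension $G\in W^{1,r}(B,\R^N)$ of $g$ with $\|G\|_{W^{1,r}(B)}\leq C\|g\|_{W^{1-1/r,r}(\partial B)}$. Writing $h = G+v$, the problem becomes $-\mathrm{div}(\A\nabla v) = \mathrm{div}(\A\nabla G)$ in $B$ with $v|_{\partial B}=0$, whose right-hand side lies in $W^{-1,r}(B,\R^N)$. For $r\in[2,\infty)$ existence and the bound $\|v\|_{W^{1,r}}\leq C\|G\|_{W^{1,r}}$ come from the same Calder\'on-Zygmund / multiplier argument as in (b), applied now at one less derivative. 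For $r\in(1,2)$ I would pass to the dual exponent $r' = r/(r-1)\in(2,\infty)$: the bounded inverse $W^{-1,r'}\to W^{1,r'}_0$ obtained in the previous case, applied to the transposed system (which again satisfies \eqref{eq:LHcondition} by the symmetry of $\A$), transposes to a bounded inverse $W^{-1,r}\to W^{1,r}_0$. Uniqueness in both parts follows from the $W^{1,2}$ uniqueness together with the embedding $W^{1,r}\hookrightarrow W^{1,2}$ for large $r$ and a density argument for small $r$.

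The principal obstacle is the $L^q$ Calder\'on-Zygmund estimate feeding both parts, since this is the deep analytic ingredient; however, because $\A$ is constant this ultimately reduces to a Fourier-multiplier statement for the explicit symbol $L(\xi)^{-1}$, and the full package (including the boundary flattening, the fractional trace/extension facts used above, and the various exponent ranges) is collected in Proposition 2.11 of \cite{KriGm19} and the references therein, which is the version we will actually cite.
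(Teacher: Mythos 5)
Your proposal is correct and, on the point that matters, takes exactly the same route as the paper: the paper gives no proof at all and simply says the result is classical, citing Proposition 2.11 of \cite{KriGm19} and the references therein, which is precisely where you also land. What you add is a detailed and essentially accurate blueprint of what lies behind that citation: Plancherel plus the symmetry of $\A$ to pass from the Legendre--Hadamard rank-one bound to genuine $L^2$ coercivity for a constant-coefficient system, Lax--Milgram for the $W^{1,2}$ theory, a Mihlin-multiplier/Calder\'{o}n--Zygmund argument for the interior $W^{2,q}$ estimate (the constancy of $\A$ giving the explicit homogeneous symbol $L(\xi)^{-1}$), boundary flattening or Agmon--Douglis--Nirenberg for regularity up to $\partial B$, trace extension and a $W^{-1,r}\to W^{1,r}_0$ estimate for part (a), and a duality step using the symmetry of $\A$ to reach $r\in(1,2)$. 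The one place worth tightening is the uniqueness claim for $r<2$: ``a density argument'' is vague, and the clean way is to test a homogeneous $W^{1,r}_0$ solution against solutions of the adjoint problem with smooth right-hand side furnished by the already-established $r'\geq 2$ theory, which is itself a duality argument rather than a density one. Since you are in the end citing the same source, this is a cosmetic remark rather than a gap.
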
 
	\begin{rmk}\label{rmk:ellipest}
	If we only consider the gradient $\nabla h$ above, it is enough to control $\norm{\nabla h}_{W^{1,r}(B,\R^N)}$ by $[g]_{W^{1-\frac{1}{r},r}(\partial B,\,\R^N)}$ with considering $g-(g)_{\partial B}$. In particular, if $g\in W^{1,r}(B,\R^N)$, its trace $\tr_{B}g$ exists in $W^{1-\frac{1}{r},r}(B,\R^N)$ (see \cite{Giovanni17}, Section 18.4). The estimate of $\norm{h}_{W^{1,r}(B,\R^N)}$ in \ref{it:ellipDiri} can be then replaced by $\norm{g}_{W^{1,r}(B,\R^N)}$.
	\end{rmk}
	
	
\section{Auxiliary results for the integrands}\label{sec:aux}
	\par The first two subsections are devoted for estimates of the integrands involved in our proof. Some proofs are omitted and we refer to \cite{KriGm19} for details. Two corollaries of the quasiconvexity of $F$ are given in the third subsection.

\subsection{Estimates for the reference integrand}\label{subsec:refint}
	\par We show some properties for the reference integrand $E_p$ that will be useful later. In the following, only the case $p\in [1,2)$ is considered.
	\par Obviously, we have that $E_p(z)$ is $C^2$, and an elementary calculation gives 
	\begin{align}
	&E_p^{\prime}(w) z = p\langle w \rangle^{p-2}w \cdot z,\\
	&E_p^{\pprime}(w)[z,z] = p\langle w\rangle^{p-4}(\langle w\rangle^2\lvert z\rvert^2+(p-2)\lvert w\cdot z\rvert^2). 
	\end{align} Considering the two cases $p\in (1,2)$ and $p=1$ separately, we have 
	\begin{equation} \label{eq:Ediffest}
	E_p^{\pprime}(w)[z,z] \geq \left\{\ 
		\begin{aligned}
		& p(p-1)\langle w\rangle^{p-2}\abs{z}^2,& &p\in (1,2)\\
		&\langle w\rangle^{-3}\abs{z}^2,& &p=1.
		\end{aligned}\right. 
	\end{equation} Thus, the function $E_p$ is a convex function. In the following, we only consider $E_p$ with $1\leq p<2$. By the definition and convexity of $E_p$, it is easy to get the following:
	\begin{lem}\label{lem:Eest}
	Suppose that $1\leq p<2$ and set $a_1 = \sqrt{2}-1, a_2 =1$. Then the following holds 
		\begin{align}
		& a_1 \min\{\lvert z\rvert^p,\lvert z\rvert^2\} \leq E_p(z) \leq a_2 \min\{\lvert z\rvert^p,\lvert z\rvert^2\}, \label{eq:Eest1}\\
		& E_p(az) \leq \max\{a,a^2\}E_p(z) \quad \mbox{and}\quad E_p(z+w) \leq 2(E_p(z)+E_p(w)) \label{eq:Eest2}
		\end{align} for any $a>0$ and any $z,w \in \mathbb{H}$.
	\end{lem}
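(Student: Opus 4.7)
The plan is to prove the three estimates sequentially, each reducing to a short elementary computation. For the two-sided bound (3.7), I would split according to whether $|z|\leq 1$ or $|z|\geq 1$, so that $\min\{|z|^p,|z|^2\}$ becomes $|z|^2$ or $|z|^p$ respectively. Setting $t=|z|^2$ and using the concavity inequality $(1+t)^{p/2}\leq 1+(p/2)t\leq 1+t$ on the small-scale branch, and the subadditivity $(1+t)^{p/2}\leq 1+t^{p/2}$ (valid for $p/2\in[1/2,1]$) on the large-scale branch, yields the upper bound with constant $a_2=1$. For the lower bound with $a_1=\sqrt{2}-1$, the ratio $E_p(z)/\min\{|z|^p,|z|^2\}$ is minimised precisely at $|z|=1$, $p=1$, where it equals $2^{1/2}-1$. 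On each branch this can be verified by studying the auxiliary function $f(t):=(1+t)^{p/2}-1-(\sqrt{2}-1)\min\{t,t^{p/2}\}$: on $[0,1]$ one checks $f(0)=f(1)=0$ and that $f'$ changes sign only once; on $[1,\infty)$ one factors out $t^{p/2}$ and uses $(1+1/t)^{p/2}\geq 1$ together with the boundary value at $t=1$.

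For the scaling estimate in (3.8), the case $a\in[0,1]$ follows at once from the convexity of $E_p$ (already recorded via (3.6)) and $E_p(0)=0$, giving $E_p(az)=E_p(az+(1-a)\cdot 0)\leq aE_p(z)$. For $a\geq 1$ the claim is that $t\mapsto E_p(tz)/t^2$ is non-increasing on $[1,\infty)$. Writing $g(t):=E_p(tz)$ and $u:=1+t^2|z|^2$, this reduces, after computing $g'(t)t-2g(t)$ and clearing common factors, to the one-variable inequality
\begin{equation*}
\phi(u):=(2-p)u^{p/2}+pu^{p/2-1}-2\geq 0\qquad (u\geq 1).
\end{equation*}
A direct differentiation gives $\phi'(u)=\tfrac{p(2-p)}{2}u^{p/2-2}(u-1)\geq 0$ for $p\in[1,2)$ and $u\geq 1$, and $\phi(1)=0$, which closes the argument. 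Evaluating at $t=1$ yields $E_p(tz)\leq t^2 E_p(z)$ as required.

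The third estimate in (3.8) is then a one-line corollary: chaining convexity with the scaling bound (with $a=2$, so $\max\{a,a^2\}=4$),
\begin{equation*}
E_p(z+w)\;=\;E_p\bigl(2\cdot\tfrac{z+w}{2}\bigr)\;\leq\;4\,E_p\bigl(\tfrac{z+w}{2}\bigr)\;\leq\;4\cdot\tfrac{1}{2}\bigl(E_p(z)+E_p(w)\bigr)\;=\;2\bigl(E_p(z)+E_p(w)\bigr).
\end{equation*}
The only step with any real content is the monotonicity of $g(t)/t^2$ for $t\geq 1$ in the superunit scaling; everything else is convexity or a short one-variable comparison. The delicate point is the sharp constant $a_1=\sqrt{2}-1$, whose optimality forces the case analysis at $|z|=1$, $p=1$ to be treated carefully rather than absorbed into a cruder bound.
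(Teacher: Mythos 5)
Your proof is essentially correct and supplies the elementary computations that the paper compresses into the remark ``by the definition and convexity of $E_p$''; the convexity-based treatment of the scaling and subadditivity estimates in particular matches what the paper intends. Two small imprecisions in the sketch of the lower bound in (\ref{eq:Eest1}) are worth fixing. First, for $t=\lvert z\rvert^2\in[0,1]$ you write $f(0)=f(1)=0$, but $f(1)=2^{p/2}-\sqrt{2}$ vanishes only at $p=1$; for $p>1$ it is strictly positive. The conclusion is unaffected — $f(0)=0$, $f(1)\geq 0$, $f'(0)=p/2-(\sqrt{2}-1)>0$ and $f'$ is decreasing, so $f$ has a single interior maximum and stays nonnegative on $[0,1]$ — but the endpoint claim should be stated as an inequality. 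Second, the sketch for $t\geq 1$ is too loose as written: merely invoking $(1+1/t)^{p/2}\geq 1$ gives $(1+t)^{p/2}-1\geq t^{p/2}-1$, which is $\geq(\sqrt{2}-1)t^{p/2}$ only once $t^{p/2}\geq 1/(2-\sqrt{2})\approx 1.71$, so it fails near $t=1$. A clean way to close this branch is to check that the ratio $r(t):=\bigl((1+t)^{p/2}-1\bigr)/t^{p/2}$ is non-decreasing on $[1,\infty)$ (one has $r'(t)=(p/2)t^{-2}\bigl[t^{1-p/2}-(1+1/t)^{p/2-1}\bigr]\geq 0$ since $t^{1-p/2}\geq 1\geq(1+1/t)^{p/2-1}$), so $r(t)\geq r(1)=2^{p/2}-1\geq\sqrt{2}-1$. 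With these two touch-ups the argument is complete; the treatment of (\ref{eq:Eest2}) via $\phi(u)=(2-p)u^{p/2}+pu^{p/2-1}-2$ and the final chaining $E_p(z+w)\leq 4E_p(\tfrac{z+w}{2})\leq 2(E_p(z)+E_p(w))$ are both correct as stated.
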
 A corollary of (\ref{eq:Eest1}) is 
		\begin{equation}\label{eq:Eest3}
		\abs{z}^p \leq 1+\frac{1}{a_1}E_p(z), \quad \mbox{for any }z\in \HH,\ p\in [1,2).
		\end{equation}
	\begin{lem}\label{lem:Emeanest1}
	Let $1\leq p<2$, $B \subset \R^n$ be an open ball and $u \in L^p(B,\HH)$. Then for any $z\in \mathbb{H}$ we have 
		\begin{equation}\label{eq:Emeanest1}
		\int_B E_p(u-u_B)\id x \leq 4\int_B E_p(u-z)\id x.
		\end{equation} When $p=1$, the function $u$ can be replaced by a bounded $\HH$-valued Radon measure, and the inequality holds in the relaxed sense as in \textnormal{(\ref{eq:flrelaxed})}.
	\end{lem}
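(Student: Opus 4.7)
The plan is to exploit the two properties packaged in Lemma~\ref{lem:Eest}: the subadditive bound (\ref{eq:Eest2}) and the convexity of $E_p$ (a direct consequence of (\ref{eq:Ediffest})). Writing $u - u_B = (u - z) - (u_B - z)$ and using that $E_p$ is an even function, the pointwise inequality from (\ref{eq:Eest2}) yields
\begin{equation*}
E_p(u - u_B) \le 2 E_p(u - z) + 2 E_p(u_B - z).
\end{equation*}
Integrating over $B$, the first term on the right is already in the desired form. For the second, since $u_B - z = \fint_B (u - z)\id x$, Jensen's inequality gives $E_p(u_B - z) \le \fint_B E_p(u - z)\id x$, so that $\Le^n(B)\, E_p(u_B - z) \le \int_B E_p(u - z)\id x$. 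Combining these two contributions produces the claimed constant $2 + 2 = 4$.

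For the measure version with $p = 1$, I would decompose $\mu = f\Le^n + \mu^s$, so that $\mu_B = f_B + \mu^s(B)/\Le^n(B)$ and the Lebesgue density of $\mu - \mu_B \Le^n$ is $f - \mu_B$ while its singular part remains $\mu^s$. By (\ref{eq:flrelaxed}), the singular contribution to the left-hand side equals $\int_B E^\infty(\id \mu^s/\id |\mu^s|)\id |\mu^s|$, which is precisely one of the two terms appearing in $\int_B E(\mu - z)$. The pointwise subadditivity handles the Lebesgue density as in the function case, and one replaces the plain Jensen inequality by its extension to bounded Radon measures: every affine minorant $\ell(w) = a \cdot w + b \le E(w)$ automatically satisfies $a \cdot w \le E^\infty(w)$ (take $w \mapsto tw$, divide by $t$, and let $t \to \infty$), so integrating each such $\ell$ against $\mu - z\Le^n$ and taking the supremum over $\ell$ yields $\Le^n(B)\, E(\mu_B - z) \le \int_B E(\mu - z)$.

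The main subtlety I anticipate is the Jensen-type inequality for bounded Radon measures: the standard scalar proof does not directly cover the singular component, and one must check that the supremum-of-affine-minorants representation still delivers the inequality when the singular term is processed through the recession function. Once this is in place, the remainder is a routine combination of (\ref{eq:Eest2}) with the identity $E_p(-w) = E_p(w)$, and no properties of $E_p$ beyond those already packaged in Lemma~\ref{lem:Eest} are required.
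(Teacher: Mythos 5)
Your argument for the $L^p$ case is exactly the paper's: apply (\ref{eq:Eest2}) (using that $E_p$ is even) to get $E_p(u - u_B) \le 2E_p(u-z) + 2E_p(u_B - z)$ pointwise, integrate, and handle the constant term with Jensen's inequality and the convexity of $E_p$ from (\ref{eq:Ediffest}). The constant bookkeeping $2 + 2 = 4$ is correct.

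For the measure case with $p = 1$, you take a genuinely different route. The paper disposes of it by mollification: approximate $\mu$ area-strictly by smooth functions (Lemma \ref{lem:areaapprox}), apply the $L^1$ inequality, and pass to the limit using the Reshetnyak-type continuity statement recorded at the end of Subsection \ref{subsec:measure} (noting that $E(\cdot - w) \in \EB_1$ and that the averages converge). You instead argue directly on the Lebesgue--Radon--Nikod\'ym decomposition $\mu = f\Le^n + \mu^s$: the singular parts of $\mu - \mu_B\Le^n$ and $\mu - z\Le^n$ coincide, the density part is handled by the pointwise subadditivity as before, and the Jensen step is proved by hand using affine minorants of $E$ together with the correct observation that any affine minorant $a\cdot w + b \le E(w)$ has linear part dominated by the recession function, $a\cdot w \le E^\infty(w)$. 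This is a valid Jensen inequality for measures, and your bookkeeping (LHS $\le 4A + 3S$ where $A$ and $S$ are the absolutely continuous and singular contributions to $\int_B E(\mu - z)$) lands safely under the stated constant $4$. Your route avoids invoking the approximation and continuity machinery at the cost of slightly more explicit work with the decomposition; the paper's mollification route is shorter given that the area-strict approximation and Reshetnyak results are already set up, but your argument is self-contained and perhaps more transparent about where the recession function enters. Both are correct.
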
 It is easy to show this lemma for $L^p$ maps with (\ref{eq:Eest2}) and Jensen's inequality, and the estimate for Radon measures follows by mollification.
	\begin{lem}\label{lem:Emeanest2}
	Let $1\leq p<2$, $B \subset \R^n$ be an open ball and $f\in L^p(B,\HH)$. Set $\mathcal{E}:= \fint_B E_p(f)\id x$, then we have
		\begin{equation}
		\fint_B \abs{f}^p\id x \leq \sqrt{\mathcal{E}^2+2\mathcal{E}}.
		\end{equation} When $\mathcal{E}\leq a$, it is obvious that the right-hand side can be replaced by $\sqrt{(2+a)\mathcal{E}}$. When $p=1$, we have the analogue holds for bounded $\HH$-valued Radon measures.
	\end{lem}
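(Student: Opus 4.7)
The plan is to reduce the inequality to a pointwise bound together with an application of Jensen's inequality, using the concavity of an auxiliary function.

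First, I would establish the pointwise estimate
\[ \abs{f(x)}^p \leq \sqrt{E_p(f(x))^2 + 2E_p(f(x))} \qquad \text{for a.e. } x \in B. \]
To see this, note the elementary inequality $(1+u)^p \geq 1 + u^p$ for all $u \geq 0$ and $p \geq 1$; this follows by considering $h(u) := (1+u)^p - u^p - 1$, noting $h(0) = 0$ and $h'(u) = p((1+u)^{p-1} - u^{p-1}) \geq 0$. Applying this with $u = |f|^2$ gives $\langle f\rangle^{2p} = (1+|f|^2)^p \geq 1 + |f|^{2p}$, hence
\[ |f|^{2p} \leq \langle f \rangle^{2p} - 1 = (1 + E_p(f))^2 - 1 = E_p(f)^2 + 2E_p(f), \]
and taking square roots yields the desired pointwise bound.

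Next, I would observe that the function $\phi(t) := \sqrt{t^2 + 2t}$ is concave on $[0,\infty)$. Differentiating $\phi(t)^2 = t^2 + 2t$ twice gives $\phi''(t)\phi(t) + (\phi'(t))^2 = 1$, and since $(\phi'(t))^2 = (t+1)^2/(t^2+2t) = 1 + 1/(t^2+2t) > 1$, we deduce $\phi''(t) < 0$. Hence Jensen's inequality applied to the $L^1$-function $E_p(f) \geq 0$ gives
\[ \fint_B \sqrt{E_p(f)^2 + 2E_p(f)} \id x \;=\; \fint_B \phi(E_p(f)) \id x \;\leq\; \phi\brac{\fint_B E_p(f) \id x} \;=\; \sqrt{\mathcal{E}^2+2\mathcal{E}}. \]
Combining with the pointwise estimate yields the conclusion. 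The bound $\sqrt{(2+a)\mathcal{E}}$ when $\mathcal{E} \leq a$ is immediate since then $\mathcal{E}^2 \leq a\mathcal{E}$.

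For the measure version in the case $p = 1$, I would argue by area-strict approximation: given a bounded $\HH$-valued Radon measure $\mu$ on $B$, by Lemma \ref{lem:areaapprox} there exists a sequence $\{f_j\} \subset C^\infty(B,\HH)$ with $f_j \Le^n \to \mu$ area-strictly. Since $E = E_1 \in \EB_1(\HH)$, the generalised Reshetnyak result gives $\int_B E(f_j) \id x \to \int_B E(\mu)$, that is, $\mathcal{E}(f_j) \to \mathcal{E}(\mu)$. Strict convergence moreover yields $\int_B |f_j| \id x \to |\mu|(B)$. Passing to the limit in the $L^1$-version of the inequality applied to each $f_j$ then delivers the claim for $\mu$. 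No step should pose a serious obstacle; the only point requiring a little care is verifying the concavity of $\phi$, but this is an elementary computation.
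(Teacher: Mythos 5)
Your proof is correct and follows essentially the same route the paper sketches: the key pointwise inequality $(1+u)^p \geq 1+u^p$ is equivalent to $E_p(f) \geq E(|f|^p)$, and applying Jensen to the concave $\phi = E^{-1}$ is just a reorganisation of "Jensen for the convex $E$ followed by applying $E^{-1}$." The measure case via area-strict approximation and the Reshetnyak continuity result is also the intended argument.
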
 The above lemma gives the estimate of $\fint_B \abs{f}^p\id x$ in terms of $\fint_B E_p(f)\id x$, and can be shown by Jensen's inequality and taking the inverse of $E$.
	\par By definition, we know that for $E_p^A, A\in \R^{N\times n}$, the analogues of Lemma \ref{lem:Eest} and \ref{lem:Emeanest1} hold. Moreover, for any $p\in [1,2)$ there exists $c=c(p)>0$ such that
		\begin{equation}\label{eq:refcomp}
		\frac{1}{c}E_p^A(z) \leq \frac{\lvert z\rvert^2}{(1+\abs{A}+\abs{z})^{2-p}} \leq cE_p^A(z), \quad \mbox{for any }z\in \RNn.
		\end{equation}

\subsection{Estimates for the shifted integrand}\label{subsec:shifted}
	\par Given any $C^2$ function $F\colon \R^{N\times n}\to \R$, we define for any $w \in \R^{N\times n}$ the corresponding shifted integrand 
	\begin{align}
	F_w(z) & := F(z+w)-F(w)-F^{\prime}(w) z \notag \\
	& = \int_0^1(1-t)F^{\pprime}(w+tz)[z,z]\id t.
	\end{align} 
	\begin{lem}\label{lem:LH}
	Suppose that $F\colon \R^{N\times n} \to \R$ is $C^2$ and satisfies \textnormal{\ref{it:intqc}}. When $p\in (1,2)$, there holds, with $c=c(p)>0$,
		\begin{align}
		&\int_{B} F_w(\nabla \varphi)\id x \geq c\,\ell \int_B E_p^w(\nabla \varphi)\id x,  \label{eq:pshiftedqc} \\
		&F^{\pprime}(w)[\eta \otimes \xi,\eta \otimes \xi] \geq c\,\ell\, \frac{\lvert \eta\rvert^2\lvert \xi\rvert^2}{\langle w \rangle^{2-p}}\label{eq:pLH}
		\end{align} for any ball $B\subset \R^n$, $w\in \R^{N\times n}$, $\varphi \in W^{1,p}_0(B,\R^N)$, $\eta \in \R^N$ and $\xi \in \R^n$. For $p=1$, the corresponding estimates are, with $C>0$,
		\begin{align}
		&\int_{B} F_w(\nabla \varphi)\id x \geq C\ell\int_B \langle w \rangle^{-3} E(\nabla \varphi)\id x ,\label{eq:shiftedqc}\\
		&F^{\pprime}(w)[\eta \otimes \xi,\eta \otimes \xi] \geq C\ell\, \frac{\lvert \eta\rvert^2\lvert \xi\rvert^2}{\langle w\rangle^3} \label{eq:LH}.
		\end{align}
	\end{lem}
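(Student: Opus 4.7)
The plan is to extract both the integral and the pointwise Legendre--Hadamard inequalities from the strong quasiconvexity hypothesis \ref{it:intqc}, namely that $F-\ell E_p$ is quasiconvex, combined with the lower bounds on $E_p''$ recorded in \textnormal{(\ref{eq:Ediffest})}. The key observation is that the integral inequality reduces, after a quasiconvexity-plus-cancellation step, to a pointwise lower bound on the shifted reference integrand $(E_p)_w(z):=E_p(w+z)-E_p(w)-E_p'(w)z$.

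For the integral inequality, I would apply quasiconvexity of $F-\ell E_p$ to the test map $x\mapsto w\cdot x+\varphi(x)$ on the ball $B$: since $\varphi\in W^{1,p}_0(B,\R^N)$ and $F-\ell E_p$ is quasiconvex of $p$-growth, one has
\[
\int_B\bigl[F(w+\nabla\varphi)-\ell E_p(w+\nabla\varphi)\bigr]\id x\ \ge\ |B|\bigl[F(w)-\ell E_p(w)\bigr].
\]
Because $\varphi$ vanishes on $\partial B$, the linear terms $\int_B F'(w)\nabla\varphi\id x$ and $\int_B E_p'(w)\nabla\varphi\id x$ both vanish and may be subtracted on each side, giving
\[
\int_B F_w(\nabla\varphi)\id x\ \ge\ \ell\int_B(E_p)_w(\nabla\varphi)\id x.
\]
It therefore suffices to prove the pointwise comparisons $(E_p)_w(z)\ge c\,E_p^w(z)$ for $p\in(1,2)$ and $(E_1)_w(z)\ge C\langle w\rangle^{-3}E(z)$ for $p=1$.

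For $p\in(1,2)$, I would use the integral form of Taylor's theorem, $(E_p)_w(z)=\int_0^1(1-t)E_p''(w+tz)[z,z]\id t$, together with \textnormal{(\ref{eq:Ediffest})} to obtain
\[
(E_p)_w(z)\ \ge\ p(p-1)|z|^2\int_0^1(1-t)\langle w+tz\rangle^{p-2}\id t\ \ge\ c\,\frac{|z|^2}{(1+|w|+|z|)^{2-p}},
\]
where the final step uses $\langle w+tz\rangle\le 1+|w|+|z|$ and $p-2<0$. An appeal to \textnormal{(\ref{eq:refcomp})} then yields $(E_p)_w(z)\ge c\,E_p^w(z)$, and the integral inequality \textnormal{(\ref{eq:pshiftedqc})} follows. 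The linear-growth case $p=1$ is the main technical point: Taylor's formula only supplies $(E_1)_w(z)\gtrsim|z|^2/(1+|w|+|z|)^3$, which is too weak for large $|z|$. Instead I would work with the explicit expression $(E_1)_w(z)=\langle w+z\rangle-\langle w\rangle-(w\cdot z)/\langle w\rangle$, multiply through by $\langle w\rangle$, rationalise the difference of square roots, and use $(1+|w|^2)|z|^2-(w\cdot z)^2\ge|z|^2$ together with $\langle w+z\rangle\le\langle w\rangle+|z|$ to obtain
\[
(E_1)_w(z)\ \ge\ \frac{c\,|z|^2}{\langle w\rangle^2(\langle w\rangle+|z|)}.
\]
Splitting into the regimes $|z|\le\langle w\rangle$ and $|z|>\langle w\rangle$ and comparing with $E(z)\asymp\min(|z|,|z|^2)$ then gives $(E_1)_w(z)\ge C\langle w\rangle^{-3}E(z)$, and \textnormal{(\ref{eq:shiftedqc})} follows.

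Finally, for the pointwise Legendre--Hadamard bounds \textnormal{(\ref{eq:pLH})} and \textnormal{(\ref{eq:LH})}, quasiconvexity of $F-\ell E_p$ implies rank-one convexity (\cite{Dac08}, Theorem 5.3), so $(F-\ell E_p)''(w)[\eta\otimes\xi,\eta\otimes\xi]\ge0$ for all $\eta\in\R^N$, $\xi\in\R^n$. Rearranging and applying \textnormal{(\ref{eq:Ediffest})} with $z=\eta\otimes\xi$ and $|z|=|\eta||\xi|$ produces the stated inequalities in both cases. The main obstacle I anticipate is the pointwise bound for $(E_1)_w$ in the linear-growth regime, since the naive Taylor argument used when $p>1$ fails to capture the correct linear growth of $E$ at infinity, forcing the direct rationalisation argument sketched above.
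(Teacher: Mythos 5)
Your proof is correct and fills in exactly the route the paper sketches by reference: the quasiconvexity inequality for $F-\ell E_p$ applied to $x\mapsto w\cdot x+\varphi(x)$ reduces both integral bounds to pointwise lower bounds on the shifted reference integrand $(E_p)_w$, which you then establish; and the pointwise Legendre--Hadamard bounds follow from rank-one convexity of $F-\ell E_p$ together with \textnormal{(\ref{eq:Ediffest})}. The paper cites \cite{CFM98}, Lemma~2.1 for precisely the Taylor-integral estimate $\int_0^1(1-t)\langle w+tz\rangle^{p-2}\id t\asymp(1+\abs{w}+\abs{z})^{p-2}$ you reproduce from scratch, \cite{Federer69}, 5.1.10 for quasiconvexity $\Rightarrow$ rank-one convexity, and \cite{KriGm19}, Lemma~4.1 for the linear-growth case.

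The one genuinely distinct ingredient is your treatment of $p=1$. You correctly observe that the crude Taylor bound $\langle w+tz\rangle\le 1+\abs{w}+\abs{z}$ fails to give the linear growth of $E$ for large $\abs{z}$, and replace it with the explicit rationalisation $\langle w\rangle (E_1)_w(z)=(a^2-b^2)/(a+b)$ with $a=\langle w\rangle\langle w+z\rangle$, $b=1+\abs{w}^2+w\cdot z$, $a^2-b^2=(1+\abs{w}^2)\abs{z}^2-(w\cdot z)^2\ge\abs{z}^2$ and $0<a+b\le 2\langle w\rangle(\langle w\rangle+\abs{z})$. This yields $(E_1)_w(z)\ge\abs{z}^2/(2\langle w\rangle^2(\langle w\rangle+\abs{z}))\ge C\langle w\rangle^{-3}E(z)$ after the two-case comparison with $E(z)\asymp\min(\abs{z},\abs{z}^2)$. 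This is a clean, self-contained alternative to invoking the cited lemma; a Taylor-integral argument would in fact also succeed if one splits the $t$-integration at $t\sim\langle w\rangle/\abs{z}$ rather than bounding $\langle w+tz\rangle$ uniformly, but your rationalisation is more transparent and entirely adequate. No gaps.
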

	\par The first estimate (\ref{eq:pshiftedqc}) can be showed with the quasiconvexity condition \ref{it:intqc}, \cite{CFM98}, Lemma 2.1 and (\ref{eq:refcomp}). See \cite{KriGm19}, Lemma 4.1 for (\ref{eq:shiftedqc}). The Legendre-Hadamard estimates (\ref{eq:pLH}) and (\ref{eq:LH}) follow from the convexity of $E_p$ and \cite{Federer69}, 5.1.10.

	\begin{lem}
	Suppose that $F\colon \R^{N\times n} \to\R$ satisfies \textnormal{\ref{it:intqc},\ref{it:intreg}} with $p\in [1,2)$. Then for any $m>0$ and any $w\in\R^{N\times n}$ satisfying $\lvert w\rvert\leq m$, we have
		\begin{equation}
		\lvert F_w(z)\rvert \leq CE_p(z)\label{eq:shiftedest2}
		\end{equation} hold for $C=C(m,n,N,L,p)>0$. If we assume \textnormal{\ref{it:int2deriv}} alternatively with $p\in (1,2)$, the estimate becomes
		\begin{equation}\label{eq:ushiftedest1}
		\lvert F_w(z)\rvert\leq CE_p^w(z)
		\end{equation} with $C=C(L,p)>0$.
	\end{lem}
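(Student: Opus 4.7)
The plan is to work from the integral form of the second-order Taylor expansion
\[
F_w(z) \;=\; \int_0^1 (1-t)\, F^{\pprime}(w+tz)[z,z]\id t,
\]
which will be the starting point for both estimates. The two cases differ essentially in what bound on $F^{\pprime}$ is available.

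For \eqref{eq:shiftedest2} under \ref{it:intreg} I would split according to the size of $z$. When $\lvert z\rvert \leq 1$, the argument $w+tz$ of $F^{\pprime}$ stays in the closed ball of radius $m+1$ around the origin, hence by the local $C^{2,1}$-regularity of $F$ we have $\lvert F^{\pprime}(w+tz)\rvert \leq C(m,F)$; this yields $\lvert F_w(z)\rvert \leq C\lvert z\rvert^2$, which is controlled by $CE_p(z)$ via the lower bound $E_p(z) \geq a_1 \lvert z\rvert^2$ on $\{\lvert z\rvert \leq 1\}$ from \eqref{eq:Eest1}. When $\lvert z\rvert \geq 1$, I would instead expand $F_w$ directly as $F(z+w)-F(w)-F^{\prime}(w) z$ and estimate each term by the $p$-growth hypothesis \ref{it:intp} together with the local bound $\lvert F^{\prime}(w)\rvert \leq C(m)$; this gives $\lvert F_w(z)\rvert \leq C(m,L,p)(1+\lvert z\rvert^p) \leq C\lvert z\rvert^p$, which is again dominated by $CE_p(z)$ since $E_p(z)\geq a_1\lvert z\rvert^p$ on $\{\lvert z\rvert \geq 1\}$.

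For \eqref{eq:ushiftedest1} under \ref{it:int2deriv}, I would plug the pointwise bound $\lvert F^{\pprime}(w+tz)\rvert \leq L(1+\lvert w+tz\rvert)^{p-2}$ into the Taylor formula to get
\[
\lvert F_w(z)\rvert \;\leq\; L\lvert z\rvert^2\int_0^1 (1-t)(1+\lvert w+tz\rvert)^{p-2}\id t.
\]
The core analytic step is the classical auxiliary estimate (Lemma 2.1 of \cite{CFM98}; see also the discussion preceding Lemma 4.1 of \cite{KriGm19}) stating that for $p\in (1,2)$,
\[
\int_0^1 (1-t)(1+\lvert w+tz\rvert)^{p-2}\id t \;\leq\; C(p)\,(1+\lvert w\rvert+\lvert z\rvert)^{p-2}.
\]
Combining this with the comparison \eqref{eq:refcomp}, which identifies $(1+\lvert w\rvert +\lvert z\rvert)^{p-2}\lvert z\rvert^2$ with $E_p^w(z)$ up to constants depending only on $p$, produces the claimed bound $\lvert F_w(z)\rvert \leq CE_p^w(z)$.

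The only nontrivial point is the auxiliary estimate used in the second case, which is delicate because $(1+\lvert w+tz\rvert)^{p-2}$ becomes large exactly where $w+tz$ is small. It is handled by a standard case split on whether $\lvert w\rvert\geq 2\lvert z\rvert$ (so $\lvert w+tz\rvert\sim \lvert w\rvert$ uniformly in $t$) or $\lvert w\rvert<2\lvert z\rvert$ (so $(1+\lvert w\rvert+\lvert z\rvert)\sim (1+\lvert z\rvert)$ and one uses a change of variable along the segment joining $w$ to $w+z$, exploiting $p>1$ so that the resulting singular integrand remains integrable). As this auxiliary inequality is by now standard, I would invoke it rather than reprove it, keeping the exposition in line with the rest of Section \ref{sec:aux}.
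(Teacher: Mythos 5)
Your proposal matches the paper's proof. For \eqref{eq:ushiftedest1} you follow the paper verbatim: Taylor's integral formula, the pointwise bound from \ref{it:int2deriv}, Lemma~2.1 of \cite{CFM98} to collapse the $t$-integral to $C(p)(1+\lvert w\rvert+\lvert z\rvert)^{p-2}$, and then \eqref{eq:refcomp}. For \eqref{eq:shiftedest2} the paper only says ``direct calculation and Lemma~\ref{lem:intLip} by considering $\lvert z\rvert\leq 1$ and $\lvert z\rvert>1$ separately,'' and your split is exactly that: near $z=0$ you bound $F^{\pprime}$ locally via \ref{it:intreg}, and for large $z$ you bound the three terms of $F_w$ using the $p$-growth and the local bound on $F^{\prime}(w)$ (the latter being precisely what Lemma~\ref{lem:intLip} supplies at $\lvert w\rvert\leq m$), then compare to $E_p(z)\geq a_1\min\{\lvert z\rvert^p,\lvert z\rvert^2\}$ from \eqref{eq:Eest1}. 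One small caveat, shared with the paper: in the case $\lvert z\rvert\leq 1$ the constant acquires a dependence on the local $C^2$-norm of $F$ through \ref{it:intreg}, since \ref{it:intp} and \ref{it:intqc} alone do not quantify $\lvert F^{\pprime}\rvert$; this is harmless for the application but worth keeping in mind.
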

	\begin{proof}
	The estimate (\ref{eq:shiftedest2}) can be obtained with direct calculation and Lemma \ref{lem:intLip} by considering the cases $\lvert z\rvert\leq 1$ and $\lvert z\rvert>1$ separately.
	\par For (\ref{eq:ushiftedest1}), by definition, Taylor's theorem and \ref{it:int2deriv}, we have the estimate
		\begin{align*}
		\lvert F_w(z)\rvert &= \lvert F(z+w)-F(w)-F^{\prime}(w)z\rvert\\
		&=\lvert \int_0^1 F^{\pprime}(w+tz)(1-t)\id t[z,z]\rvert\\
		&\leq L\int_0^1 \frac{1-t}{(1+ \abs{w+tz})^{2-p}}\id t \abs{z}^2.
		\end{align*} Lemma 2.1 in \cite{CFM98} implies that the integral in the last line is controlled by $C(p)(1+\abs{w}+\abs{z})^{p-2}$. The estimate (\ref{eq:refcomp}) then gives the desired result.
	\end{proof}

\begin{lem}\label{lem:shiftedest3}
Suppose that $F\colon \R^{N\times n} \to\R$ satisfies \textnormal{\ref{it:intqc},\ref{it:intreg}} with $p\in [1,2)$. Then for any $m>0$ and any $w\in\R^{N\times n}$ with $\lvert w\rvert\leq m$, there exists a constant $C=C(m,n,N,L,p)>0$ such that
	\begin{equation}\label{eq:shiftedest3}
	\lvert F^{\pprime}_w(0)z-F^{\prime}_w(z)\rvert \leq CE(z).
	\end{equation} Alternatively, with \textnormal{\ref{it:int2deriv}, \ref{it:int2deLip}} and no bound for $w$, we have 
	\begin{equation} \label{eq:ushiftedest2}
		\lvert F^{\pprime}_w(0)z-F^{\prime}_w(z)\rvert \leq C (1+\abs{w})^{p-2}E^w(z)
	\end{equation} with $C=C(n,N,L,p)>0$.
\end{lem}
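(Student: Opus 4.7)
The starting point is Taylor's identity: since $F'_w(z)=F'(w+z)-F'(w)$ and $F^{\pprime}_w(0)=F^{\pprime}(w)$, one has
\[ F^{\pprime}(w)z-F'_w(z)=\int_0^1\bigl(F^{\pprime}(w)-F^{\pprime}(w+tz)\bigr)z\id t, \]
so both inequalities reduce to controlling the integrand via suitable regularity of $F^{\pprime}$.

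For the first estimate (\ref{eq:shiftedest3}) I split on $\abs{z}\leq 1$ and $\abs{z}>1$. In the small regime both $w$ and $w+tz$ lie in $\bar B(0,m+1)$; since $F\in C^{2,1}_{loc}$ by \ref{it:intreg}, the local Lipschitz continuity of $F^{\pprime}$ on this ball yields $\abs{F^{\pprime}(w)-F^{\pprime}(w+tz)}\leq C(m)t\abs{z}$, so the integral is bounded by $C(m)\abs{z}^2\leq C(m)E(z)$ by Lemma \ref{lem:Eest}. In the large regime $\abs{F^{\pprime}(w)}$ is bounded by a constant depending only on $m$, while the standard $(p-1)$-growth $\abs{F^{\prime}(\xi)}\leq C(1+\abs{\xi}^{p-1})$ (a consequence of \ref{it:intqc} and \ref{it:intp} for $C^{2}$ quasiconvex integrands of $p$-growth) applied to $\xi=w+z$ and $\xi=w$ reduces $\abs{F^{\pprime}(w)z-F'_w(z)}\leq\abs{F^{\pprime}(w)}\abs{z}+\abs{F^{\prime}(w+z)}+\abs{F^{\prime}(w)}$ to an $O(\abs{z})$ estimate, which is dominated by $E(z)$ because $E(z)\geq a_1\abs{z}$ when $\abs{z}>1$.

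For the second estimate (\ref{eq:ushiftedest2}), inserting \ref{it:int2deLip} into the Taylor representation gives
\[ \abs{F^{\pprime}(w)z-F'_w(z)}\leq L\abs{z}^2 J,\qquad J:=\int_0^1\frac{t\id t}{(a+\abs{w+tz})^{3-p}},\quad a:=1+\abs{w}. \]
Noting that $E^w(z)=\abs{z}^2/(\sqrt{a^2+\abs{z}^2}+a)$ is comparable to $\abs{z}^2/(a+\abs{z})$, the goal is to prove $J\leq C(p)/\bigl(a^{2-p}(a+\abs{z})\bigr)$. In the regime $\abs{z}\leq a$ the trivial bound $a+\abs{w+tz}\geq a$ yields $J\leq 1/(2a^{3-p})$, matching the target since $a+\abs{z}$ is then comparable to $a$. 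In the regime $\abs{z}>a$ I partition $[0,1]$ at $t_{\ast}:=a/\abs{z}\in(0,1)$: on $[0,t_{\ast}]$ the trivial bound contributes at most $a^{p-1}/(2\abs{z}^2)$; on $[t_{\ast},1]$ one has $t\abs{z}\geq a\geq\abs{w}$, so the reverse triangle inequality yields $\abs{w+tz}\geq t\abs{z}-\abs{w}$ and hence $a+\abs{w+tz}\geq t\abs{z}$, giving a contribution of at most $\abs{z}^{p-3}\int_{t_{\ast}}^{1}t^{p-2}\id t\leq\abs{z}^{p-3}/(p-1)$. Summing these two contributions and comparing with the target reduces to the elementary inequality $a<\abs{z}$ (used together with $p\in(1,2)$).

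The main technical point is this large-$\abs{z}$ regime. A naive application of Lemma 2.1 in \cite{CFM98}, via the bound $(a+\abs{w+tz})^{3-p}\geq a(1+\abs{w+tz})^{2-p}$ together with $\int_0^1(1+\abs{w+tz})^{p-2}\id t\leq C(p)(1+\abs{w}+\abs{z})^{p-2}$, delivers only $J\leq C(p)(a+\abs{z})^{p-2}/a$, which exceeds the required $a^{p-2}/(a+\abs{z})$ by a factor of $((a+\abs{z})/a)^{p-1}$ that blows up as $\abs{z}\to\infty$. The partition at $t_{\ast}=a/\abs{z}$, isolating the region where $\abs{w+tz}$ may vanish from the region where it is forced to grow linearly in $t$, is precisely what yields the sharp bound.
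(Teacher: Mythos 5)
Your proof is correct. For (\ref{eq:shiftedest3}), and for the regime $\abs{z}\leq 1+\abs{w}$ of (\ref{eq:ushiftedest2}), your argument coincides with the paper's. For $\abs{z}>1+\abs{w}$ in (\ref{eq:ushiftedest2}) you take a genuinely different route: you remain inside the Taylor representation and bound the integral $J$ sharply by splitting at $t_\ast=a/\abs{z}$, using the trivial bound on $[0,t_\ast]$ and the reverse-triangle lower bound $a+\abs{w+tz}\geq t\abs{z}$ on $[t_\ast,1]$. The paper instead abandons the integral in this regime and bounds the three terms $\abs{F^{\pprime}(w)z}$, $\abs{F^{\prime}(w+z)}$, $\abs{F^{\prime}(w)}$ directly, via \ref{it:int2deriv} for the first and the growth bound $\abs{F^{\prime}(\xi)}\leq CL(1+\abs{\xi})^{p-1}$ of Lemma \ref{lem:intLip} for the other two, finishing with $E^w(z)\gtrsim\abs{z}$ when $\abs{z}>1+\abs{w}$. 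The paper's route is shorter, but it invokes Lemma \ref{lem:intLip}, whose proof uses quasiconvexity and $p$-growth of $F$, assumptions not listed among \ref{it:int2deriv} and \ref{it:int2deLip}; your argument uses only \ref{it:int2deLip} (plus $p>1$ for convergence of $\int t^{p-2}\id t$), which is cleaner relative to the stated hypotheses of the second part. Your remark that a naive appeal to \cite{CFM98}, Lemma 2.1 loses a factor $((a+\abs{z})/a)^{p-1}$ correctly identifies why the split at $t_\ast$ is needed if one insists on estimating $J$; the paper simply sidesteps that obstacle by not estimating $J$ at all in this regime.
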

\begin{proof}
The estimate (\ref{eq:shiftedest3}) can be easily obtained by considering the two cases separately. When $\lvert z\rvert\leq 1$, there holds
	\begin{align*}
	&\ \lvert F^{\pprime}_w(0)z-F^{\prime}_w(z)\rvert = \lvert F^{\pprime}(w)z-(F^{\prime}(w+z)-F^{\prime}(w))\rvert\\
	 =&\ \abs{ \int_0^1(F^{\pprime}(w)-F^{\pprime}(w+tz))\cdot z \id t} \leq C\int_0^1 t\lvert z\rvert^2\id t \leq CE(z),
	\end{align*} where the last line is from \ref{it:intreg} and that $w+tz,w\in B(0,m+1)$. In the other case, we estimate the three terms directly with Lemma \ref{lem:intLip}:
	\begin{align}
	&\ \lvert F^{\pprime}_w(0)z-F^{\prime}_w(z)\rvert = \lvert F^{\pprime}(w)z-(F^{\prime}(w+z)-F^{\prime}(w))\rvert \label{eq:shiftedmid1}\\
	\leq &\ C(m)\lvert z\rvert+CL(2+\abs{w+tz}^{p-1}+\abs{z}^{p-1}) \overset{\lvert z\rvert>1}{\leq} C(m,L)\lvert z\rvert \leq CE(z).\notag
	\end{align} 
	\par For (\ref{eq:ushiftedest2}), the proof is in a similar manner. When $\abs{z}\leq 1+\abs{w}$, the condition \ref{it:int2deLip} implies
	\begin{align*}
	 \abs{F^{\pprime}_w(0)z-F^{\prime}_w(z)} &= \abs{ \int_0^1 (F^{\pprime}(w)-F^{\pprime}(w+tz))\cdot z \id t} \\
	&\leq L\int_0^1 \frac{t\abs{z}^2}{(1+\abs{w}+\abs{w+tz})^{3-p}} \id t\\
	&\overset{p<2}{\leq} L (1+\abs{w})^{p-1} \frac{\abs{z}^2}{(1+\abs{w})^2} \\
	& \leq C(1+\abs{w})^{p-2}E^w(z).
	\end{align*} When $\abs{z}>1+\abs{w}$, the estimate can be obtained in a way similar to (\ref{eq:shiftedmid1}) with Lemma \ref{lem:intLip}.
\end{proof}

\subsection{Local Lipschitz continuity and mean coercivity}
	\par In this subsection, we state two corollaries of the quasiconvexity of $F$. One is the local Lipschitz continuity of $F$ and the other is its $L^p$ mean coercivity.	
	\par It is well-known that separately convex functions are locally Lipschitz (see \cite{Morrey66}, p.112, and \cite{Fusco80, Marcellini85}). Lemma 2.2 in \cite{BKK00} gives a better estimate constant. As a corollary of the above results, the following lemma gives the growth of the derivative of a quasiconvex integrand.
	\begin{lem}\label{lem:intLip}
	Suppose that $G\colon \R^{N\times n}\to\R$ is a real-valued function and of $p$-growth with $p\in [1,\infty)$, i.e., \[ \lvert G(z)\rvert\leq L(1+\lvert z\rvert^p) \] for some $L>0$ and any $z\in \R^{N\times n}$. If $G$ is furthermore quasiconvex, then there exists a constant $C=C(n,N,p)>0$ such that 
		\begin{equation}\label{eq:intLip}
		\lvert G^{\prime}(z)\rvert\leq CL(1+\lvert z\rvert^{p-1}).
		\end{equation} In particular, $G$ is Lipschitz when $p=1$.
	\end{lem}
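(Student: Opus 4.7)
The plan is to deduce the claim from two well-known facts already cited in the text: (i) quasiconvexity implies rank-one convexity (\cite{Dac08}, Theorem 5.3), and (ii) a separately convex function admits a quantitative local Lipschitz estimate, namely the one in \cite{BKK00}, Lemma 2.2. The strategy is then to apply this Lipschitz bound on a ball of radius comparable to $1+\abs{z}$ and to control the sup norm of $G$ on that ball via the $p$-growth hypothesis.

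First I would observe that rank-one convexity implies separate convexity: fixing all but one entry of a matrix $z\in\RNn$ and varying the remaining one corresponds to moving along a line $t\mapsto z+t(e_i\otimes e_j)$, whose direction has rank one. Hence $G$ is convex in each of its $Nn$ coordinates. Together with the $p$-growth bound $\abs{G(w)}\le L(1+\abs{w}^p)$, separate convexity forces $G$ to be locally bounded (both above and below) on every compact set.

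Next, I would invoke \cite{BKK00}, Lemma 2.2, which for a separately convex function bounded above on $B_{2r}(z)$ yields an estimate of the form
\begin{equation*}
\abs{G'(z)}\le \mathrm{Lip}\bigl(G, B_r(z)\bigr)\le \frac{C(n,N)}{r}\,\sup_{w\in B_{2r}(z)}\abs{G(w)}.
\end{equation*}
Choosing $r := 1+\abs{z}$, every $w\in B_{2r}(z)$ satisfies $\abs{w}\le 3(1+\abs{z})$, so the $p$-growth of $G$ gives
\begin{equation*}
\sup_{w\in B_{2r}(z)}\abs{G(w)} \le L\bigl(1+3^p(1+\abs{z})^p\bigr) \le C(p)\,L\,(1+\abs{z})^p.
\end{equation*}
Substituting this back yields
\begin{equation*}
\abs{G'(z)}\le \frac{C(n,N)\,C(p)\,L\,(1+\abs{z})^p}{1+\abs{z}}=C(n,N,p)\,L\,(1+\abs{z})^{p-1},
\end{equation*}
which is \eqref{eq:intLip}. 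When $p=1$ the right-hand side is the constant $2CL$, so $\abs{G'}$ is uniformly bounded and $G$ is globally Lipschitz, giving the ``in particular'' clause.

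I do not expect a serious obstacle: the proof is essentially a packaging of the two cited results, and the only minor point of care is to make sure that the quantitative Lipschitz estimate in \cite{BKK00} can indeed be applied to $G$ itself rather than to $G$ minus its infimum on the ball, but this is exactly ensured by the two-sided bound that separate convexity plus the $p$-growth upper bound provide. The choice of radius $r=1+\abs{z}$ is what couples the growth exponent $p$ on the numerator with the factor $r^{-1}$ on the denominator to produce the exponent $p-1$ on the right-hand side.
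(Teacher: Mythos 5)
Your proposal is correct and follows exactly the route the paper indicates: pass from quasiconvexity to rank-one convexity to separate convexity, then apply the quantitative local Lipschitz bound of \cite{BKK00}, Lemma 2.2 on a ball of radius $r=1+\abs{z}$, with the $p$-growth bound controlling the oscillation of $G$ on $B_{2r}(z)$. The paper does not write out this argument but merely cites the same sources, so your proof is a faithful filling-in of the omitted details; the only refinement worth noting is that the cited lemma is typically stated in terms of the oscillation of $G$ rather than $\sup\abs{G}$, but as you already observe the two-sided $p$-growth bound makes these interchangeable up to a factor of $2$.
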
 
	\par The next result is the $L^p$ mean coercivity of $F$, which helps us control the $L^p$-integral of $\lvert \nabla v\rvert$ for $v\in W^{1,p}$ by $\int F(\nabla v)\id x$. For a thorough discussion of the connection between coercivity and quasiconvexity, see \cite{ChenKris17}.
	\begin{lem}\label{lem:1coercive}
	Suppose that $F\colon \R^{N\times n}\to \R$ satisfies \textnormal{\ref{it:intp}} and \textnormal{\ref{it:intqc}} with $p\in [1,2)$. Fix a ball $B_R=B(x_0,R)\subset \R^n$ and $u\in W^{1,p}(B_R,\R^N)$, then there exist $a_3=a_3(p,\ell), a_4=a_4(n,N,L,\ell,p,F) \in \R, a_5 = a_5(n,N,L,\ell,p)>0$ such that 
		\begin{equation}\label{eq:1coercive}
		 a_3\fint_{B_R}\lvert \nabla v\rvert^p\id x+ a_4 \leq \fint_{B_R} F(\nabla v)\id x +a_5\fint_{B_R}\abs{\nabla u}^p\id x
		\end{equation} for any $v\in W^{1,p}_u(B_R,\R^N)$.
	\end{lem}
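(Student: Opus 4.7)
The plan is to exploit the strong quasiconvexity hypothesis \ref{it:intqc}---i.e., the quasiconvexity of $G := F - \ell E_p$---in its $W^{1,p}_0$-form on $B_R$, then swap the resulting integrand back from $\nabla v - \nabla u$ to $\nabla v$ via the Lipschitz-type bound for $G$ provided by Lemma \ref{lem:intLip}, and finally close with Young's inequality together with the elementary lower bound $E_p(z) \geq a_1 |z|^p - a_1$ from Lemma \ref{lem:Eest}.

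More precisely, I would first note that $G = F - \ell E_p$ is quasiconvex and of $p$-growth (since $E_p$ has $p$-growth by Lemma \ref{lem:Eest}), so by a standard density argument one has
$$\fint_{B_R} G(z + \nabla \psi)\id x \geq G(z)$$
for every $z \in \R^{N\times n}$ and every $\psi \in W^{1,p}_0(B_R, \R^N)$. The hypothesis $v \in W^{1,p}_u(B_R, \R^N)$ gives $\psi := v - u \in W^{1,p}_0(B_R, \R^N)$, so applied at $z = 0$ this yields $\fint_{B_R} G(\nabla v - \nabla u)\id x \geq G(0) = F(0)$. Next, by Lemma \ref{lem:intLip} applied to $G$, $\abs{G^{\prime}(z)} \leq CL(1 + \abs{z}^{p-1})$; combined with the mean value theorem and the sub-additivity $(a+b)^{p-1} \leq a^{p-1} + b^{p-1}$ on $[0,\infty)$ (valid because $p - 1 \in [0,1]$), this gives
$$\abs{G(\nabla v) - G(\nabla v - \nabla u)} \leq CL\brac{1 + \abs{\nabla v}^{p-1} + \abs{\nabla u}^{p-1}}\abs{\nabla u}.$$
For $p \in (1,2)$, Young's inequality splits the cross term as $\abs{\nabla v}^{p-1}\abs{\nabla u} \leq \epsilon \abs{\nabla v}^p + C_\epsilon \abs{\nabla u}^p$; the case $p = 1$ is trivial since then $\abs{\nabla v}^{p-1} \equiv 1$ and no splitting is needed.

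Integrating the resulting pointwise inequality over $B_R$, combining it with $\fint_{B_R} G(\nabla v - \nabla u)\id x \geq F(0)$, substituting back $G = F - \ell E_p$, and invoking Lemma \ref{lem:Eest} in the form $E_p(z) \geq a_1 \abs{z}^p - a_1$, I would obtain
$$(\ell a_1 - CL\epsilon)\fint_{B_R}\abs{\nabla v}^p\id x \leq \fint_{B_R} F(\nabla v)\id x + CL(1+C_\epsilon)\fint_{B_R}\abs{\nabla u}^p\id x + K,$$
where $K$ absorbs $F(0)$ together with fixed constants depending on $L, \ell, p$. Finally I would fix $\epsilon := \ell a_1 / (2CL)$ so that the coefficient of $\fint \abs{\nabla v}^p$ on the left becomes $\ell a_1/2 > 0$; this yields the stated inequality with $a_3 = \ell a_1/2 = a_3(p,\ell)$, $a_5 = CL(1+C_\epsilon) = a_5(n,N,L,\ell,p)$, and $a_4 = a_4(n,N,L,\ell,p,F)$, matching the required dependencies.

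The only delicate point is the absorption of $\epsilon \fint \abs{\nabla v}^p$ into the left-hand side: this works precisely because the strong quasiconvexity constant $\ell$ in \ref{it:intqc} is strictly positive, so $\ell a_1 - CL\epsilon > 0$ for $\epsilon$ small. The $p = 1$ endpoint is actually cleaner, as $\abs{G^{\prime}(z)} \leq CL$ uniformly and the argument goes through term by term without Young's.
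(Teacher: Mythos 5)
Your proposal is correct and follows essentially the same route as the paper: both use the $W^{1,p}_0$-form of the strong quasiconvexity with the test function $v-u$, a Lipschitz-type bound from Lemma \ref{lem:intLip} to move from $\nabla v - \nabla u$ to $\nabla v$, and Young's inequality to absorb the cross term. The only (cosmetic) difference is that you apply Lemma \ref{lem:intLip} to $G = F - \ell E_p$ and then substitute back, whereas the paper estimates $\fint F(\nabla v - \nabla u) - \fint F(\nabla v)$ directly using the Lipschitz bound for $F$ and keeps the excess $E_p(\nabla v - \nabla u)$ on the coercive side; both yield identical constants and dependencies.
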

	\begin{proof}
	First, with the triangle inequality and (\ref{eq:Eest3}) we have 
		\begin{align}
		\fint_{B_R} \abs{\nabla v}^p \id x &\leq C_p \fint_{B_R}(\abs{\nabla v-\nabla u}^p +\abs{\nabla u}^p)\id x \label{eq:coermid1}\\
		&\leq C_p \fint_{B_R}(1+a_1^{-1}E_p(\nabla v-\nabla u) +\abs{\nabla u}^p)\id x.\notag
		\end{align} Notice that $v-u \in W^{1,p}_0(B_R,\R^N)$, then \ref{it:intqc} implies 
		\begin{equation}\label{eq:coermid2}
		\ell \fint_{B_R}E_p(\nabla v-\nabla u)\id x \leq \fint_{B_R}F(\nabla v-\nabla u)\id x -F(0).
		\end{equation} To estimate the integral on the right-hand side, we apply Lemma \ref{lem:intLip} to get
		\begin{align}
		& \abs{\fint_{B_R}(F(\nabla v-\nabla u)-F(\nabla v))\id x} \leq \fint_{B_R}\int_0^1 \abs{F^{\prime}(\nabla v-t\nabla u)} \abs{\nabla u}\id t\id x \label{eq:coermid3}\\
		&\leq CL\fint_{B_R}\int_0^1 (1+\abs{\nabla v-t\nabla u}^{p-1})\abs{\nabla u}\id t\id x \notag\\
		&\leq CL\int_{B_R}(\abs{\nabla u}+\abs{\nabla u}^p +\abs{\nabla u}\abs{\nabla v}^{p-1})\id x \notag\\
		&\leq CL \fint_{B_R}(1+(1+\sigma^{1-p})\abs{\nabla u}^p +\sigma\abs{\nabla v}^p)\id x,\notag
		\end{align} where the $\sigma$ is to be determined. Combining (\ref{eq:coermid1})-(\ref{eq:coermid3}), we know that 
		\begin{align*}
		\fint_{B_R} \abs{\nabla v}^p \id x &\leq c_1 \fint_{B_R}(1+(1+\sigma^{1-p})\abs{\nabla u}^p +\sigma \abs{\nabla v}^p)\id x\\ 
		&\hspace{.6cm} +c_2\fint_{B_R}(F(\nabla v)-F(0))\id x+C_p,
		\end{align*} where $c_1=c_1(n,N,p,L,\ell)>0, c_2=c_2(p,\ell)>0$. Take $\sigma = \frac{1}{2c_1}$, then (\ref{eq:1coercive}) follows. 
	\end{proof}
	\begin{rmk}\label{rmk:1coercive}
	The convexity of $\lvert \cdot\rvert$ together with Remark \ref{rmk:intapprox} tells us that (\ref{eq:1coercive}) can be extended to maps in $BV_u(B_R,\R^N)$ if $p=1$. 
	\end{rmk}
	
\section{Partial regularity for $Du$}\label{sec:partialreg}
	\par This section is for the proof of Theorem \ref{thm:1partialreg}. The function $F$ is assumed to grow linearly near $\infty$ (i.e., $p=1$) and we consider $BV$ $\omega$-minimizers.
\subsection{Caccioppoli-type inequality}
	\par We now give a Caccioppoli-type inequality of the second kind, which is a modified version of Proposition 4.3 in \cite{KriGm19}.
	\begin{prop}\label{prop:1caccioppoli}
	Suppose that $F\colon \R^{N\times n}\to \R$ satisfies \textnormal{\ref{it:intp}-\ref{it:intreg}} with $p=1$, and $u \in BV_{loc}(\Omega,\R^N)$ is an $\omega$-minimizer of $\bar{\F}$ with constant $R_0>0$, where $\omega$ satisfies \textnormal{\ref{it:omega1}}. Then for any $m>0$, there exists $c=c(m,n,N,L, \ell)\geq 1$ such that the following holds: for any $B(x_0,R) \ssubset \Omega$ with $R<R_0$ and any affine map $a\colon \R^n\to \R^N$ satisfying $\lvert \nabla a\rvert\leq m$, we have 
		\begin{equation}\label{eq:1caccioppoli}
		\int_{B_{\frac{R}{2}}} E(D(u-a)) \leq c \left(\int_{B_R} E\left(\frac{u-a}{R}\right)\id x + \omega(R)R^n \right).
		\end{equation}
	\end{prop}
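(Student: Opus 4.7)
Let $A:=\nabla a\in\RNn$ (so $|A|\le m$) and $w:=u-a$, so that $Dw=Du-A\Le^n$. Fix two radii $R/2\le r_1<r_2\le R$ and a cutoff $\eta\in C^\infty_c(B_{r_2})$ with $\eta\equiv 1$ on $B_{r_1}$, $0\le\eta\le 1$, $|\nabla\eta|\le 2/(r_2-r_1)$. Take as competitor $v:=a+(1-\eta)w\in BV_u(B_{r_2})$, so that the ``cutoff error'' $\varphi:=u-v=\eta w$ lies in $BV_0(B_{r_2})$ because $\eta$ vanishes on $\partial B_{r_2}$.

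\textbf{Reduction to the shifted integrand.} Insert $v$ into the $\omega$-minimality of $u$ and rewrite $F(z)=F_A(z-A)+F(A)+F'(A)(z-A)$, consistently with the relaxation (\ref{eq:flrelaxed}): the identity $F^\infty=F_A^\infty+F'(A)\cdot$ (obtained by a direct inspection of the definition of $F^\infty$) allows the decomposition to pass to the singular parts. The constant $F(A)$ cancels and the linear contribution becomes $F'(A)\cdot\int_{B_{r_2}}d D\varphi$; extending $\varphi$ by zero produces a compactly supported $BV$ map of $\R^n$ with no mass on $\partial B_{r_2}$, so this integral vanishes. We obtain
\[
\int_{B_{r_2}} F_A(Du-A)\le\int_{B_{r_2}} F_A(Dv-A)+\omega(R)\int_{B_{r_2}}(1+|Dv|). \qquad(\dagger)
\]
Now apply the shifted quasiconvexity estimate (\ref{eq:shiftedqc}) to $\varphi$, after extending it from $W^{1,1}_0(B_{r_2})$ to $BV_0(B_{r_2})$ via area-strict approximation (Lemma~\ref{lem:intapprox1}). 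Since $\langle A\rangle^{-3}\ge\langle m\rangle^{-3}$ and $D\varphi=Du-A\Le^n$ on $B_{r_1}$, this yields $c_1(m)\int_{B_{r_1}} E(Du-A)\le\int_{B_{r_2}} F_A(D\varphi)$.

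\textbf{Annulus bound and hole-filling.} Split $\int_{B_{r_2}} F_A(D\varphi)=\int_{B_{r_1}} F_A(Du-A)+\int_{B_{r_2}\setminus B_{r_1}} F_A(D\varphi)$, substitute $(\dagger)$ in the first summand, and control every remaining $F_A$-contribution pointwise by $|F_A(z)|\le c(m)E(z)$ from (\ref{eq:shiftedest2}). Combining this with the elementary estimates of Lemma~\ref{lem:Eest} and the pointwise inequality $|Dv-A|\le (1-\eta)|Dw|+|w||\nabla\eta|$ yields
\[
c_1(m)\int_{B_{r_1}} E(Du-A)\le c_2(m)\int_{B_{r_2}\setminus B_{r_1}} E(Du-A)+c_2(m)\int_{B_{r_2}} E\bigl(\tfrac{w}{r_2-r_1}\bigr)\,dx+\omega(R)\int_{B_{r_2}}(1+|Dv|).
\]
For the last term, use $|Dv|\le 1+E(Dv)$ and then $E(Dv)\le C_m(1+E(Du-A)+E(w/(r_2-r_1)))$; since $\omega\le 1$, the piece $\omega(R)\int E(Du-A)$ only increases the $c_2(m)$-constant, while $\omega(R)\cdot C_mR^n$ produces the desired $\omega(R)R^n$ on the right. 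Adding $c_2(m)\int_{B_{r_1}} E(Du-A)$ to both sides gives the hole-filled inequality $\Phi(r_1)\le\theta\,\Phi(r_2)+(\text{good terms})$ with $\theta:=c_2/(c_1+c_2)\in(0,1)$, where $\Phi(r):=\int_{B_r} E(Du-A)$. Since $E(w/(r_2-r_1))\le (R/(r_2-r_1))^2\,E(w/R)$ by Lemma~\ref{lem:Eest}, Giusti's standard iteration lemma (Lemma~6.1 in \cite{Giusti03}) applied on radii in $[R/2,R]$ delivers the desired Caccioppoli estimate.

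\textbf{Main obstacle.} The only genuinely delicate step is to make the shifted-integrand reduction $(\dagger)$ and the shifted quasiconvexity estimate work in the $BV$ setting: one must interpret $F_A(D\,\cdot\,)$ correctly on the singular part of $Du$ via (\ref{eq:flrelaxed}), verify that $F_A$ is rank-one convex of linear growth (so that the area-strict continuity of Lemma~\ref{lem:intapprox1} applies to both $F_A$ and $E$), and exploit the cancellation $\int_{B_{r_2}}dD\varphi=0$ for $\varphi\in BV_0(B_{r_2})$ to kill the linear term. Once these are in place the remainder is the now-standard hole-filling/iteration scheme.
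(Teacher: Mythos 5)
Your proposal is correct and follows essentially the same route as the paper: shift to $F_A$ via a cutoff construction, apply the shifted quasiconvexity \eqref{eq:shiftedqc} (extended to $BV$ test maps by approximation), control the remaining $F_A$-terms by $|F_A|\le CE$ from \eqref{eq:shiftedest2}, and conclude with hole-filling and Lemma~\ref{lem:iteraineq}. The only cosmetic difference is that you spell out the reduction to the shifted integrand (the cancellation of $F(A)$ and of the linear term via $\int dD\varphi=0$ together with the identity $F^\infty=F_A^\infty+F'(A)\cdot$), whereas the paper compresses this into the remark that $\tilde u$ is an $\omega$-minimizer of the relaxed functional for $\tilde F$, and that the paper passes to $BV$ test maps by mollification rather than by invoking Lemma~\ref{lem:intapprox1} directly.
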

	
	\begin{proof}
	Let $\tilde{F}=F_{\nabla a}$, $\tilde{u}=u-a$. Then $\tilde{u}$ is an $\omega$-minimizer of the relaxed functional corresponding to $\tilde{F}$. Fix $\frac{R}{2}<t<s<R$. Take a smooth cut-off function $\rho$ between $B_t$ and $B_s$ with $\rho \in C_c^{\infty}(B_s)$ and $\lvert \nabla \rho\rvert\leq \frac{2}{s-t}$, and set $\varphi = \rho\tilde{u}, \psi = (1-\rho)\tilde{u}$. Let $\{\phi_{\varepsilon}\}$ be the standard mollifiers and $\varphi_{\varepsilon} = \varphi\ast \phi_{\varepsilon}$, then $\varphi_{\varepsilon} \in W^{1,1}_0(B_s,\R^N)$ when $\varepsilon <\mbox{dist}(\mbox{supp}(\rho),\partial B_s)$.
	\par The strong quasiconvexity of $F$ gives, as in (\ref{eq:shiftedqc}),  \[ C(m,\ell)\int_{B_s} E(\nabla \varphi_{\varepsilon}) \id x \leq \int_{B_s}\tilde{F}(\nabla \varphi_{\varepsilon})\id x. \] Take $\varepsilon \to 0$, then \[ C\int_{B_s} E(D \varphi)\leq \int_{B_s}\tilde{F}(D \varphi). \] We can further proceed as follows:
		\begin{align*}
		&\ C\int_{B_t} E(D\tilde{u}) \leq C\int_{B_s}E(D\varphi) \leq \int_{B_s} \tilde{F}(D\varphi)\\
		=&\ \int_{B_s}\tilde{F}(D\tilde{u})+\int_{B_s} \tilde{F}(D\varphi)-\int_{B_s}\tilde{F}(D\tilde{u})\\
		\leq &\ \int_{B_s}\tilde{F}(D\psi) + \omega(s)\int_{B_s}(1+\lvert D\psi\rvert ) +\int_{B_s} \tilde{F}(D\varphi)-\int_{B_s}\tilde{F}(D\tilde{u})\\
		\overset{(\ref{eq:shiftedest2})}{\leq}&\ C\int_{B_s}E(D\psi) + \omega(s)\int_{B_s}(1+\lvert D\psi \rvert)+C\int_{B_s\setminus B_t}(E(D\varphi)+E(D\tilde{u})).
		\end{align*}The second term can be estimated by the triangle inequality and (\ref{eq:Eest3}): \[
		\omega(s)\int_{B_s}(1+\lvert D\psi \rvert) \leq\omega(s) \left(\omega_n s^n + \int_{B_s}\lvert D\psi\rvert \right) \leq 2\omega(s)\omega_ns^n+C\int_{B_s}E(D\psi).\] Inserting this into the estimate of $C\int_{B_t}E(D\tilde{u})$, we obtain
		\begin{align*}
		&\ \int_{B_t} E(D\tilde{u}) \leq C\int_{B_s}E(D\psi) + \int_{B_s\setminus B_t}(E(D\varphi)+E(D\tilde{u})) + C\omega(s)s^n\\
		=&\ \int_{B_s} E((1-\rho)D\tilde{u}-\tilde{u}\otimes \nabla \rho) + \int_{B_s\setminus B_t} (E(D\tilde{u})+E(\rho D\tilde{u}+\tilde{u}\otimes \nabla\rho))+C\omega(s)s^n\\
		\leq&\ C\int_{B_s\setminus B_t}E(D\tilde{u}) +C\int_{B_s}E\left(\frac{\tilde{u}}{s-t}\right)\id x + C\omega(R)R^n.
		\end{align*} Now we can apply the hole-filling trick, adding $C\int_{B_t}E(D\tilde{u})$ to both sides, and divide the inequality by $C+1$. Finally, by the following iteration lemma we have the desired inequality.
	\end{proof}
	\begin{lem}\label{lem:iteraineq}
Suppose that $\theta \in (0,1), R>0$ and the two functions $\Phi, \Psi\colon (0,R]\to \R$ are positive. $\Phi$ is bounded, and $\Psi$ is decreasing with $\Psi(\sigma \rho) \leq \sigma^{-2}\Psi(\rho)$ for any $\rho\in (0,R], \sigma\in (0,1]$. If for any $\frac{R}{2}\leq t<s \leq R$ there holds
	\begin{equation}\label{eq:iteraineq}
	\Phi(t) \leq \theta\, \Phi(s)+\Psi(s-t)+B
	\end{equation} for some $B>0$, then we have, for some $C=C(\theta)>0$,
	\begin{equation}
	\Phi\left(\frac{R}{2}\right) \leq C(\Psi(R)+B).
	\end{equation}
\end{lem}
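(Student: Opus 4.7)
The plan is to apply a standard hole-filling iteration along a geometric sequence of radii filling the gap between $R/2$ and $R$. Choose $\tau \in (\sqrt{\theta},1)$, for instance $\tau = \theta^{1/4}$, so that $\theta\tau^{-2}<1$. Define $t_0 = R/2$ and $t_{i+1} = t_i + (1-\tau)\tau^i R/2$; since $\sum_{i\geq 0}(1-\tau)\tau^i = 1$, we have $t_i \uparrow R$ as $i\to\infty$, with $t_{i+1}-t_i = (1-\tau)\tau^i R/2 \in (0, R/2]$.

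Applying (\ref{eq:iteraineq}) with $t=t_i$ and $s=t_{i+1}$ gives
\begin{equation*}
\Phi(t_i) \leq \theta\,\Phi(t_{i+1}) + \Psi(t_{i+1}-t_i) + B.
\end{equation*}
Using the scaling hypothesis $\Psi(\sigma\rho)\leq \sigma^{-2}\Psi(\rho)$ with $\rho=R$ and $\sigma = (1-\tau)\tau^i/2 \in (0,1]$, one bounds $\Psi(t_{i+1}-t_i) \leq 4(1-\tau)^{-2}\tau^{-2i}\Psi(R)$.

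Iterating the recursion $k$ times yields
\begin{equation*}
\Phi(R/2) \leq \theta^k\Phi(t_k) + \sum_{i=0}^{k-1}\theta^i\Bigl[4(1-\tau)^{-2}\tau^{-2i}\Psi(R) + B\Bigr].
\end{equation*}
Since $\Phi$ is bounded on $(0,R]$ and $\theta<1$, the leading term $\theta^k\Phi(t_k)$ tends to $0$ as $k\to\infty$. By our choice of $\tau$ the geometric series $\sum_{i\geq 0}(\theta\tau^{-2})^i$ converges, and $\sum_{i\geq 0}\theta^i = (1-\theta)^{-1}<\infty$, so letting $k\to\infty$ gives
\begin{equation*}
\Phi(R/2) \leq \frac{4}{(1-\tau)^2(1-\theta\tau^{-2})}\Psi(R) + \frac{B}{1-\theta},
\end{equation*}
which is the desired inequality with $C = C(\theta)$ since $\tau$ depends only on $\theta$.

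There is no serious obstacle here: the only point requiring care is selecting $\tau$ to simultaneously balance (i) the requirement $\tau<1$ so that the step sizes $(1-\tau)\tau^i R/2$ stay positive and sum to $R/2$, and (ii) the requirement $\theta\tau^{-2}<1$ so that the geometric series arising from iterating the $\Psi$-term converges. Any $\tau\in(\sqrt{\theta},1)$ achieves both, which is why the final constant can be taken to depend only on $\theta$.
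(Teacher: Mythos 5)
Your proof is correct, and it is precisely the standard hole-filling iteration argument from Giusti's Lemma 6.1, which is exactly the source the paper cites for this lemma (the paper does not write out a proof, only notes that it follows by modifying that reference). The choice $\tau = \theta^{1/4}$ guarantees both $\tau < 1$ and $\theta\tau^{-2} = \sqrt{\theta} < 1$, the geometric radii $t_i = R - \tau^i R/2$ stay in $[R/2, R)$, the step sizes $\sigma = (1-\tau)\tau^i/2$ lie in $(0,1]$ so the scaling hypothesis on $\Psi$ applies, and the two resulting geometric series converge to give a constant depending only on $\theta$. Nothing is missing; this is essentially the paper's intended proof.
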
 This lemma is widely used in the proofs of Caccioppoli-type inequalities and can be shown by modifying Lemma 6.1 in \cite{Giusti03}.

\subsection{Euler-Lagrange inequality}\label{subsec:ELineq}
	\par The minimizers of regular functionals satisfy the corresponding Euler-Lagrange equations. In the case of $\omega$-minimizers, we do not have such equations hold anymore, while a corresponding inequality can be obtained instead with the help of Ekeland's variational principle(see \cite{Eke74} and \cite{Giusti03}, Theorem 5.6).
	\begin{lem}[Ekeland variational principle]\label{lem:Ekeland} Suppose that $(X,d)$ is a complete metric space and $\mathcal{F}\colon X \to \R\cup \{\infty\}$ is a lower-semicontinuous function with respect to $d$, not identically $\infty$ and has a lower bound. If for some $u\in X$ and $\varepsilon>0$ we have \[ \mathcal{F}(u) \leq \inf_{v\in X}\mathcal{F}(v) +\varepsilon, \] then there exists a $w\in X$ satisfying the following:
		\begin{enumerate}[label=\textup{(\alph{enumi})}]
		\item $d(u,w) \leq \sqrt{\varepsilon}$;
		\item $\mathcal{F}(w) \leq \mathcal{F}(u)$;
		\item $\mathcal{F}(w) \leq \mathcal{F}(v)+\sqrt{\varepsilon}d(v,w)$ for any $v\in X$.
		\end{enumerate}
	\end{lem}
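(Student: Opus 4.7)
\textbf{Proof plan for Lemma \ref{lem:Ekeland}.} The plan is to follow the classical Br\'ezis--Browder style argument based on a partial order tailored to $\mathcal{F}$. Define
\[ v \preceq w \iff \mathcal{F}(v) + \sqrt{\varepsilon}\, d(v,w) \leq \mathcal{F}(w). \]
Reflexivity is trivial; antisymmetry follows because adding $v\preceq w$ and $w\preceq v$ forces $d(v,w)=0$; transitivity follows from the triangle inequality applied after adding the two defining inequalities. The desired point $w$ will be a $\preceq$-minimal element lying below $u$ in this order, because minimality is exactly condition (c) and the single relation $w\preceq u$ yields both (a) and (b), using $\sqrt{\varepsilon}\, d(w,u) \leq \mathcal{F}(u)-\mathcal{F}(w)\leq \mathcal{F}(u)-\inf_X \mathcal{F}\leq \varepsilon$.

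To produce $w$, I would construct a sequence $(u_n)\subset X$ by setting $u_0:=u$ and, having chosen $u_n$, letting $S_n:=\{v\in X:v\preceq u_n\}$ and $m_n:=\inf_{v\in S_n}\mathcal{F}(v)$ (finite, since $\mathcal{F}$ is bounded below and $u_n\in S_n$). Pick $u_{n+1}\in S_n$ with
\[ \mathcal{F}(u_{n+1})\leq m_n + 2^{-n}. \]
From $u_{n+1}\preceq u_n$ one obtains $\sqrt{\varepsilon}\,d(u_{n+1},u_n)\leq \mathcal{F}(u_n)-\mathcal{F}(u_{n+1})$, and summing in $n$ yields a telescoping bound controlled by $\mathcal{F}(u_0)-\inf_X \mathcal{F}$; hence $(u_n)$ is Cauchy and converges to some $w\in X$ by completeness. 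Passing to the limit in $u_{n+k}\preceq u_n$ via lower semicontinuity of $\mathcal{F}$ (the term $\mathcal{F}(u_{n+k})$ is the only one that need not be continuous, but the inequality goes the right way) shows $w\preceq u_n$ for every $n$, and in particular $w\preceq u_0=u$, which already gives (a) and (b).

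For (c) I would argue by contradiction: if some $v\in X$ violates (c), then $\mathcal{F}(v)+\sqrt{\varepsilon}\,d(v,w)<\mathcal{F}(w)$, so $v\preceq w$ strictly, in particular $\mathcal{F}(v)<\mathcal{F}(w)$. By transitivity $v\preceq u_n$ for every $n$, so $v\in S_n$ and therefore $\mathcal{F}(v)\geq m_n$. On the other hand $w\preceq u_{n+1}$ gives $\mathcal{F}(w)\leq \mathcal{F}(u_{n+1})\leq m_n+2^{-n}\leq \mathcal{F}(v)+2^{-n}$. Letting $n\to\infty$ contradicts the strict inequality $\mathcal{F}(v)<\mathcal{F}(w)$.

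The main subtle point I expect is the passage to the limit $u_n\to w$ together with the implication $w\preceq u_n$: this is where the lower semicontinuity hypothesis enters, and it is the reason one cannot weaken ``lower semicontinuous'' to ``Borel'' in the statement. Once this is handled cleanly, properties (a)--(c) fall out of the construction with only book-keeping.
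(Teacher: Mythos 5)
Your proof is correct and is the classical Br\'ezis--Browder/Ekeland argument; the paper itself does not prove this lemma but simply cites Ekeland (1974) and Giusti, Theorem~5.6, where essentially the same order-theoretic construction appears (with a geometric-averaging choice of $u_{n+1}$ in place of your $2^{-n}$ tolerance, a cosmetic difference). One small point worth flagging: the relation $\preceq$ is not literally antisymmetric on the set where $\mathcal{F}=\infty$, but this is harmless since $\mathcal{F}(u_0)=\mathcal{F}(u)<\infty$ forces every $v\in S_n$ to have finite $\mathcal{F}$-value, and the argument never leaves this set.
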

	\par Suppose that $F\colon \RNn \to \R$ satisfies \textnormal{\ref{it:intp}-\ref{it:intreg}} with $p=1$, $\omega$ satisfies \textnormal{\ref{it:omega1}} and $u\in BV_{loc}(\Omega,\R^N)$ is an $\omega$-minimizer of $\bar{\F}$ with constant $R_0>0$. Take $B_R=B(x_0,R)\ssubset \Omega$ such that $R<R_0$, $\lvert Du\rvert(\partial B_R)=0$ and $u\vert _{\partial B_R}\in BV(\partial B_R,\R^N)$, which is possible by Lemma \ref{lem:BVFubini}. For any $\delta>0$, Remark \ref{rmk:intapprox} implies that there exists $u_{\delta}\in W^{1,1}_u(\B_R,\R^N)$ such that 
		\begin{align}
		&\fint_{B_R}\frac{\lvert u-u_{\delta}\rvert}{R}\id x <\delta, \quad \left\lvert \fint_{B_R}E(Du)-\fint_{B_R}E(\nabla u_{\delta})\id x \right\rvert<\delta, \label{eq:1ELmid3}\\
		&\left\lvert \fint_{B_R}F(\nabla u_{\delta})\id x - \fint_{B_R}F(Du) \right\rvert < \delta. \label{eq:1ELmid2}
		\end{align} By the $\omega$-minimality of $u$ we know 
		\begin{equation}\label{eq:1ELmid1}
		\bar{\F}(u,B_R) \leq  \bar{\F}(v,B_R) + \omega(R)\int_{B_R}(1+|Dv|),\quad \mbox{for any } v \in BV_u(B_R,\R^N).
		\end{equation}
	Again from Remark \ref{rmk:intapprox}, we know that \[ \inf_{v\in W^{1,1}_u(B_R,\R^N)}\F (v,B_R) = \inf_{v\in BV_u(B_R,\R^N)}\bar{\F}(v,B_R)=:I \] and there exists $\{v_j\}\subset W^{1,1}_v(B_R,\R^N)$ such that $\F(v_j,B_R)\to I$. The  mean coercivity of $F$ (Lemma \ref{lem:1coercive}) implies 
		\begin{align*}
		\fint_{B_R} (1+\lvert Dv_j\rvert)\id x &\leq 1 + \frac{1}{a_3}\brac{\fint_{B_R}F(\nabla v_j)\id x +a_5 \fint_{B_R}\abs{Du}-a_4}\\
		&\leq 1-\frac{a_4}{a_3} +\frac{a_5}{a_3}\fint_{B_R}\abs{Du} + \frac{1}{a_3}\brac{\fint_{B_R} F(Du)+\delta_j}\\
		&\leq \fint_{B_R}\brac{a_6+a_7\abs{Du}} +\frac{\delta_j}{a_3},
		\end{align*} where $\delta_j := \F(v_j,B_R)- I \to 0$ as $j\to\infty$, and $a_6=1+\frac{L-a_4}{a_3}$, $a_7 =\frac{a_5+L}{a_3}$. Take $v$ to be $v_j$ in (\ref{eq:1ELmid1}) and let $j\to \infty$, then we have
		\begin{equation}
		\bar{\F}(u,B_R) \leq \inf_{v\in BV_u(B_R,\R^N)}\bar{\F}(v,B_R) +\omega(R)\int_{B_R}\left(a_6+a_7\lvert Du\rvert\right).
		\end{equation}
	\par Set $\varepsilon = \omega(R)\fint_{B_R}(a_6+a_7\lvert Du\rvert)$. From the above estimate of $\int (1+\lvert Dv_j\rvert)\id x$ we can see $\varepsilon >0$. Then by (\ref{eq:1ELmid2}) we have 
		\begin{align*}
		\mathscr{F}(u_{\delta},B_R) &\leq \inf_{v \in BV_u(B_R,\R^N)}\bar{\mathscr{F}}(v,B_R)+\omega_nR^n(\varepsilon+\delta)\\
		 &= \inf_{v \in W^{1,1}_u(B_R,\R^N)}\mathscr{F}(v,B_R)+\omega_nR^n(\varepsilon+\delta),
		\end{align*} where $\omega_n = \Le^n(B(0,1))$ is the Lebesgue measure of the unit ball in $\R^n$. Consider the complete metric space $X = W^{1,1}_u(B_R,\R^N)$ with $d(w_1,w_2) = \fint_{B_R}\lvert \nabla (w_1-w_2)\rvert\id x$. To apply the Ekeland variational principle Lemma \ref{lem:Ekeland}, we take $\mathcal{F}(w) = \fint_{B_R}F(\nabla w)\id x$ and replace $\varepsilon$ by $\varepsilon+\delta$. Then there is $w \in W^{1,1}_u(B_R,\R^N)$ such that
		\begin{enumerate}[label=(\alph*)]
		\item\label{it:1Ekeland1} $d(u_{\delta},w) \leq \sqrt{\varepsilon+\delta}$;
		\item $\mathcal{F}(w) \leq \mathcal{F}(u_{\delta})$;
		\item\label{it:1Ekeland3} $\mathcal{F}(w) \leq \mathcal{F}(v) + \sqrt{\varepsilon+\delta}\,d(w,v)$, for any $v \in X=W^{1,1}_u(B_R,\R^N)$.
		\end{enumerate} For any $\varphi \in W^{1,1}_0(B_R,\R^N)$, we take $v=w+t\varphi$ and insert it into \ref{it:1Ekeland3} to obtain \[ \fint_{B_R}F(\nabla w)\id x \leq \fint_{B_R}F(\nabla (w+t\varphi))\id x + \sqrt{\varepsilon + \delta} \lvert t\rvert \fint_{B_R}\lvert \nabla \varphi\rvert\id x. \] Differentiate with respect to $t$, then we have an Euler-Lagrange inequality
		\begin{equation}\label{eq:1ELineq}
		\left\lvert \fint_{B_R}F^{\prime}(\nabla w) \nabla \varphi \id x \right\rvert \leq \sqrt{\varepsilon + \delta}\fint_{B_R}\lvert \nabla \varphi\rvert\id x.
		\end{equation}

\subsection{Harmonic approximation}\label{subsec:harapprox}
	\par Now we compare $u$ with a harmonic map $h$ which coincides with it on the boundary of a certain ball. With the estimate of $u-h$, we are able to transfer some regularity of $h$ to $u$.
	\begin{prop}\label{prop:1harapprox}
	Suppose that the function $F\colon \RNn \to \R$ satisfies \textnormal{\ref{it:intp}-\ref{it:intreg}} with $p=1$, $\omega$ satisfies \textnormal{\ref{it:omega1}} and $u\in BV_{loc}(\Omega,\R^N)$ is an $\omega$-minimizer of $\bar{\F}$ with constant $R_0>0$. Let $m>0$ be a fixed constant, $a\colon \R^n \to \R^N$ be affine with $\abs{\nabla a}\leq m$ and $\tilde{F}:=F_{\nabla a}$. Assume that $B_R = B(x_0,R) \ssubset \Omega$ is a ball such that $\lvert Du\rvert(\partial B_R)=0$ and $u\vert_{\partial B_R} \in BV(\partial B_R,\R^N)$. Then the system 
		\begin{equation}\label{eq:1ellipsystem}
		\left\{\begin{aligned}
		&-\mathrm{div}(\tilde{F}^{\pprime}(0)\nabla h) = 0,& &\mathrm{in }\ B_R\\
		&h\vert_{\partial B_R}=u\vert_{\partial B_R},& &\mathrm{on }\ \partial B_R
		\end{aligned} \right.
		\end{equation} admits a unique solution $h \in W^{1,r}(B_R,\R^N)$ such that 
		\begin{equation}\label{eq:1harapproxest}
		\left(\fint_{B_R}\lvert \nabla h-\nabla a\rvert^r\id x \right)^{\frac{1}{r}} \leq C \fint_{\partial B_R}\lvert D_{\tau}(u-a)\rvert.
		\end{equation} The exponent $r$ is as in Lemma \textnormal{\ref{lem:BVbdry}} and $C=C(m,n,N,\frac{L}{\ell},r)>0$. Furthermore, for any $q\in (1,\frac{n}{n-1})$, there exists a constant $C=C(m,n,N,L,\ell,q)>0$ such that
		\begin{equation}\label{eq:1harapprox}
		\fint_{B_R} E\left(\frac{u-h}{R}\right)\id x \leq C\left(\fint_{B_R}E(D(u-a)) \right)^{q}+C(\sqrt{\varepsilon}+\sqrt{\varepsilon}^q),
		\end{equation} where $\varepsilon = \omega(R)\fint_{B_R}(a_6+a_7\lvert Du\rvert)$ is as in last subsection. 
	\end{prop}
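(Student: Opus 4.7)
The argument splits into two parts.

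\emph{Existence and boundary estimate} \textnormal{(\ref{eq:1harapproxest})}: Since $|\nabla a|\le m$, the constant tensor $\tilde F''(0)=F''(\nabla a)$ satisfies the strong Legendre--Hadamard condition \textnormal{(\ref{eq:LHcondition})} with constants depending only on $n,N,L,\ell,m$: the lower bound is \textnormal{(\ref{eq:LH})} evaluated at $w=\nabla a$, and the upper bound comes from $F\in C^{2,1}_{loc}$. The hypothesis $u|_{\partial B_R}\in BV(\partial B_R)$ together with Lemma \ref{lem:BVbdry} applied to $u-a$ yields $(u-a)|_{\partial B_R}\in W^{1-\frac1r,r}(\partial B_R)$ with Gagliardo semi-norm controlled by $R^{1/r}\fint_{\partial B_R}|D_\tau(u-a)|$. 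Since $h-a$ is $\tilde F''(0)$-harmonic with that boundary trace, Lemma \ref{lem:ellip2}(a) combined with Remark \ref{rmk:ellipest}, after rescaling to the unit ball, yields existence and \textnormal{(\ref{eq:1harapproxest})}.

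\emph{Comparison estimate} \textnormal{(\ref{eq:1harapprox})}: Because $u$ is merely $BV$, one cannot test the weak form of \textnormal{(\ref{eq:1ellipsystem})} directly against $u-h$; instead I route through the Ekeland approximant $w\in W^{1,1}_u(B_R)$ from Subsection \ref{subsec:ELineq}, which satisfies the Euler--Lagrange inequality \textnormal{(\ref{eq:1ELineq})} together with $\fint_{B_R}|\nabla(u_\delta-w)|\le\sqrt{\varepsilon+\delta}$. The difference $w-h$ lies in $W^{1,1}_0(B_R)$ and will be controlled by duality. Fix $q^{*}>n$, and for $\psi\in L^{q^{*}}(B_R,\R^N)$ let $\Phi\in W^{2,q^{*}}\cap W^{1,q^{*}}_0(B_R,\R^N)$ solve $-\mathrm{div}(\tilde F''(0)\nabla\Phi)=\psi$ via Lemma \ref{lem:ellip2}(b); Sobolev--Morrey embedding gives $\nabla\Phi\in L^\infty$ with a quantitative bound in terms of $\|\psi\|_{L^{q^{*}}}$. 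Integration by parts, the symmetry of $\tilde F''(0)$, and the $\tilde F''(0)$-harmonicity of $h-a$ transform the pairing into
\[
\int_{B_R}(w-h)\cdot\psi\;=\;\int_{B_R}\tilde F''(0)\bigl[\nabla w-\nabla a,\,\nabla\Phi\bigr].
\]
Decomposing this integrand as $\tilde F'(\nabla w-\nabla a)\nabla\Phi$ plus a remainder, using $\tilde F'(z)=F'(z+\nabla a)-F'(\nabla a)$ together with $\int\nabla\Phi=0$ to rewrite the first piece as $\int F'(\nabla w)\nabla\Phi$, and applying respectively \textnormal{(\ref{eq:1ELineq})} and the pointwise estimate \textnormal{(\ref{eq:shiftedest3})} to the two pieces, bounds $|\int(w-h)\psi|$ by $C\|\nabla\Phi\|_{L^\infty}\bigl[\sqrt{\varepsilon+\delta}\,|B_R|+\int E(\nabla w-\nabla a)\bigr]$. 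Taking the supremum over $\psi$ in the unit $L^{q^{*}}$-ball converts this into an $L^{q^{*\prime}}$-bound on $w-h$.

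\emph{Passage to $u$ and main obstacle}: H\"older and interpolation between the $L^{q^{*\prime}}$-bound and the sub-linearity of $E$ then produce the desired $\fint_{B_R}E((w-h)/R)$-bound, with the exponent $q\in(1,\frac{n}{n-1})$ in \textnormal{(\ref{eq:1harapprox})} arising precisely as the Sobolev-conjugate exponent $q^{*\prime}$ on letting $q^{*}\downarrow n$. The triangle inequality, $d(u_\delta,w)\le\sqrt{\varepsilon+\delta}$, the area-strict convergence $u_\delta\to u$ from \textnormal{(\ref{eq:1ELmid3})}, and Lemma \ref{lem:intapprox1} then transfer the bound from $w-h$ to $u-h$ and compare $\int E(\nabla w-\nabla a)$ with $\int E(D(u-a))$, accounting for the $\sqrt{\varepsilon}$ and $\sqrt{\varepsilon}^q$ correction terms. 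The main difficulty is reconciling the mere $W^{1,1}$-regularity of the $\omega$-minimizer with the $W^{1,r}$-regularity of $h$: the duality against $\nabla\Phi\in L^\infty$ delivered by the choice $q^{*}>n$ in Lemma \ref{lem:ellip2}(b) is exactly what makes both the Euler--Lagrange inequality \textnormal{(\ref{eq:1ELineq})} and the shifted pointwise estimate \textnormal{(\ref{eq:shiftedest3})} usable in this low-integrability regime.
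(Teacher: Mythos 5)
Your proposal is correct and its skeleton mirrors the paper's proof: establish the Legendre--Hadamard condition for $\tilde F''(0)$, invoke Lemma~\ref{lem:BVbdry} and Lemma~\ref{lem:ellip2}\ref{it:ellipDiri} with Remark~\ref{rmk:ellipest} for existence and (\ref{eq:1harapproxest}), route the comparison through the Ekeland approximant $w$, pair $w-h$ against the gradient of a solution to an auxiliary elliptic problem, split the bilinear form into a piece handled by the Euler--Lagrange inequality (\ref{eq:1ELineq}) (after killing the constant $F'(\nabla a)\int\nabla\Phi$) and a remainder handled by (\ref{eq:shiftedest3}), and finally pass from $w$ to $u$ via $d(u_\delta,w)\le\sqrt{\varepsilon+\delta}$, area-strict convergence and Lemma~\ref{lem:intapprox1}.

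The one genuine difference is the duality step itself. The paper picks the specific right-hand side $T(\Psi)$ (the radial truncation of $\Psi=(w-h)/R$, rescaled) so that $\int T(\Psi)\cdot\Psi=\int\min\{|\Psi|,|\Psi|^2\}\gtrsim\int E(\Psi)$, while simultaneously $\|T(\Psi)\|_{L^s}^s\le\int E(\Psi)$; this produces a self-referential inequality $X\le CAX^{1/s}$ in $X:=\fint E(\Psi)$, which is solved to give $X\le CA^{s'}$. You instead test against an arbitrary $\psi$ in the unit ball of $L^{q^*}$, which is a genuine duality and directly yields a linear bound $\|\Psi\|_{L^{q^{*\prime}}}\lesssim A$; you then convert this to the nonlinear estimate on $E$ via the elementary pointwise inequality $E(z)\le\min\{|z|,|z|^2\}\le|z|^{q^{*\prime}}$ (valid since $1\le q^{*\prime}\le 2$). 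Both give the same exponent $q=q^{*\prime}=s'$, running over $(1,\tfrac{n}{n-1})$ as $q^*=s$ runs over $(n,\infty)$. Your version avoids the bootstrapping and is arguably a bit more transparent; the paper's truncation has the minor advantage that the $L^s$ bound on the datum is already phrased in terms of $\fint E(\Psi)$ so no separate passage from an $L^{q'}$ norm back to $E$ is needed. Either route works, provided (as you do) you solve the dual problem on the unit ball after rescaling, choose $q^*>n$ so that Morrey's embedding lands $\nabla\Phi$ in $L^\infty\cap C^0$ (admissible as a test map in (\ref{eq:1ELineq})), and track that $w-h\in W^{1,1}_0\subset L^{n/(n-1)}\supset L^{q^{*\prime}}$ a priori so the duality pairing is legitimate.
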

	\begin{proof}
	From \ref{it:intreg} and (\ref{eq:LH}) we have that $\lvert \tilde{F}^{\pprime}(0)\rvert \leq C(m)$ and satisfies the Legendre-Hadamard condition. By Lemma \ref{lem:BVbdry} we know that $u\vert_{\partial B_R} \in W^{1-\frac{1}{r},r}(\partial B_R,\R^N)$ for a proper $r$ and \[\left(\fint_{\partial B_R}\int_{\partial B_R} \frac{\lvert u(x)-u(y)\rvert^r}{\lvert x-y\rvert^{n+r-2}}\id \mathcal{H}^{n-1}(x)\id \mathcal{H}^{n-1}(y)\right)^{\frac{1}{r}} \leq CR^{\frac{1}{r}}\fint_{\partial B_R}\lvert D_{\tau}u\rvert.\] Lemma \ref{lem:ellip2} implies the existence of a unique solution $h\in W^{1,r}(B_R,\R^N)$ to (\ref{eq:1ellipsystem}). By replacing $u$ by $\tilde{u}$, we have the estimate (\ref{eq:1harapproxest}).
	 \par Let $\delta, u_{\delta}$ and $w$ be as in last subsection. Take an arbitrary $\varphi \in C_c^{\infty}(B_R,\R^N)$, then we have 
		\begin{align}
		&\quad \fint_{B_R} \tilde{F}^{\pprime}(0)[\nabla (w-h),\nabla \varphi]\id x= \fint_{B_R} \tilde{F}^{\pprime}(0)[\nabla \tilde{w},\nabla \varphi]\id x \notag\\
		&= \fint_{B_R}(\tilde{F}^{\pprime}(0)[\nabla \tilde{w},\nabla \varphi]-\tilde{F}^{\prime}(\nabla \tilde{w})\nabla \varphi) \id x + \fint_{B_R} F^{\prime}(\nabla w) \nabla \varphi \id x \label{eq:1harmid1}\\
		&\leq C\fint_{B_R}E(\nabla \tilde{w})\lvert \nabla\varphi\rvert\id x +\sqrt{\varepsilon+\delta}\fint_{B_R}\lvert \nabla \varphi\rvert\id x,\notag
		\end{align} where $\tilde{w}=w-a$ and the last line is obtained with Lemma \ref{lem:shiftedest3} and (\ref{eq:1ELineq}). By approximation, $\varphi$ can be taken in $W^{1,\infty}_0\cap C^1(B_R,\R^N)$. To obtain the desired estimate, we need to find a proper test map $\varphi$, before which we scale to the unit ball $B(0,1) (=:B)$. 
	\par Define $\psi:=w-h$, and set \[ \Psi(y):= \frac{\psi(x_0+Ry)}{R},\quad \Phi(y):=\frac{\varphi(x_0+Ry)}{R},\quad \tilde{W}(y):=\frac{\tilde{w}(x_0+Ry)}{R}. \] Consider the system, with $\A:= \tilde{F}^{\pprime}(0)$,
		\begin{equation}\label{eq:1dualelliptic}
		\left\{\begin{aligned}
		&-\mbox{div}(\A\nabla \Phi) = T(\Psi),& &\mbox{in }B\\
		&\Phi\vert_{\partial B} = 0,& &\mbox{on }\partial B,
		\end{aligned} \right.
		\end{equation} where 
		\begin{equation}\label{eq:truncation}
		 T(\Psi) := \left\{\begin{aligned}
		&\Psi,& &\lvert \Psi\rvert \leq 1\\
		&\frac{\Psi}{\lvert \Psi\rvert},& &\lvert \Psi\rvert>1.
		\end{aligned} \right. 
		\end{equation} As $T(\Psi) \in L^{\infty}(B_R,\R^N)$, the solution $\Phi$ exists and lies in $W^{1,s}_0\cap W^{2,s}(B_R,\R^N)$ for any $s>1$. We take $s>n$ so that by Morrey's inequality 
		\begin{equation}
		\|\nabla \Phi\|_{L^{\infty}} \leq C\|\Phi\|_{W^{2,s}} \leq C\|T(\Psi)\|_{L^s} \leq C\left(\fint_{B}E(\Psi)\id x \right)^{\frac{1}{s}}.
		\end{equation} Thus, the following can be deduced from (\ref{eq:1harmid1})
		\begin{align*}
		\fint_{B}E(\Psi) \id x &\leq a_2 \fint_{B}\min\{\lvert \Psi\rvert,\lvert \Psi\rvert^2\}\id x = a_2 \fint_{B}[T(\Psi),\Psi]\id x\\
		&=a_2 \fint_{B}\A[\nabla \Phi,\nabla \Psi]\id x = a_2 \fint_{B}\A[\nabla \Psi,\nabla \Phi]\id x\\
		&\overset{(\ref{eq:1harmid1})}{\leq} C \fint_{B}E(\nabla \tilde{W})\lvert \nabla \Phi\rvert\id x + a_2 \sqrt{\varepsilon+\delta}\fint_{B}\lvert \nabla \Phi\rvert\id x\\
		&\leq C\left(\fint_{B}E(\nabla \tilde{W})\id x+\sqrt{\varepsilon+\delta} \right)\left(\fint_{B}E(\Psi)\id x\right)^{\frac{1}{s}}.
		\end{align*} Setting $q=s^{\prime}=\frac{s}{s-1}$, we can obtain 
		\begin{equation}\label{eq:1harmid2}
		\fint_{B}E(\Psi)\id x \leq C\left(\fint_{B}E(\nabla \tilde{W})\id x \right)^q+C(\varepsilon + \delta)^{\frac{q}{2}}.
		\end{equation} Back to $B_R$, the above inequality becomes 
		\begin{equation}\label{eq:1harmid3} 
		\fint_{B_R}E\left(\frac{w-h}{R}\right)\id x \leq C\left(\fint_{B_R}E(\nabla (w-a))\id x \right)^q+C(\varepsilon + \delta)^{\frac{q}{2}}. 
		\end{equation} To compare $u$ and $h$, we decompose $u-h$ as $(u-u_{\delta})+(u_{\delta}-w)+(w-h)$:
		\begin{align*}
		&\ \fint_{B_R}E\left(\frac{u-h}{R}\right)\id x \leq C\fint_{B_R}\left(E\left(\frac{u-u_{\delta}}{R}\right)+E\left(\frac{u_{\delta}-w}{R}\right)+E\left(\frac{w-h}{R}\right) \right)\id x \\
		\leq &\ C\fint_{B_R}\frac{\lvert u-u_{\delta}\rvert}{R}\id x + C\fint_{B_R}\frac{\lvert u_{\delta}-w\rvert}{R}\id x + C\fint_{\B_R}E\left(\frac{w-h}{R}\right)\id x\\
		\leq &\ C\delta +C\fint_{B_R}\lvert \nabla (u_{\delta}-w)\rvert\id x + C\fint_{\B_R}E\left(\frac{w-h}{R}\right)\id x\\
		\leq &\ C\delta + C\sqrt{\varepsilon+\delta} +C\left(\fint_{B_R}E(\nabla (w-a))\id x \right)^q+C(\varepsilon + \delta)^{\frac{q}{2}},
		\end{align*} where the third line comes from (\ref{eq:1ELmid3}) and Poincar\'{e}'s inequality, and the fourth the difference between $u_{\delta}$ and $w$ (see \ref{it:1Ekeland1}) and (\ref{eq:1harmid3}).  The term concerning $w-a$ can be controlled in terms of $u-a$:
		\begin{align*}
		&\ \fint_{B_R} E(\nabla (w-a))\id x \leq C\fint_{B_R}\left(E(\nabla (w-u_{\delta})+E(\nabla \tilde{u}_{\delta})\right)\id x\\
		\leq &\ C\fint_{B_R}\lvert \nabla (w-u_{\delta})\rvert\id x +C\left(\fint_{B_R}E(\nabla\tilde{u}_{\delta})-\fint_{B_R}E(D\tilde{u}) \right)+C\fint_{B_R}E(D\tilde{u})\\
		\leq &\ C\sqrt{\varepsilon+\delta}+C (\delta)+C\fint_{B_R}E(D\tilde{u}),
		\end{align*} where $C(\delta)$ is a $\delta$-related constant and goes to $0$ as $\delta\to 0$ (see Remark \ref{rmk:intapprox}). Combining the estimates above and taking $\delta\to 0$, we have (\ref{eq:1harapprox}) hold.
	\end{proof}
	
\subsection{Excess decay estimate}
	\par For any ball $B(x_0,R)\ssubset \Omega$, define the excess of $u$ as \[ \mathscr{E}(x_0,R) := \fint_{B_R}E(Du-(Du)_{B_R}). \]
	\begin{prop}\label{prop:1excessest}
	Suppose that $F\colon \R^{N\times n} \to \R$ satisfies \textnormal{\ref{it:intp}-\ref{it:intreg}} with $p=1$ and $\omega\colon [0,\infty) \to [0,\infty)$ satisfies \textnormal{\ref{it:omega1}-\ref{it:omega3}}. The map $u\in BV_{loc}(\Omega,\R^N)$ is an $\omega$-minimizers of $\bar{\F}$ with constant $R_0>0$. If the ball $B_R=B(x_0,R) \ssubset \Omega$ with $R<R_0$ is such that
		\begin{equation}
		\lvert (Du)_{B_R}\rvert<m,   \quad \fint_{B_R}\lvert Du-(Du)_{B_R}\rvert \leq 1
		\end{equation} for some $m>0$, then we have 
		\begin{equation}\label{eq:1excessest}
		\mathscr{E}(\sigma R) \leq c (\sigma^2 + \sigma^{-(n+2)}\mathscr{E}(R)^{q-1})\mathscr{E}(R)+c\sigma^{-(n+2)}\sqrt{\omega(R)}
		\end{equation} holds for any $\sigma \in (0,1)$ and any $q\in (1,\frac{n}{n-1})$ with some $c=c(m,n,N,L,\ell,q)>0$.
	\end{prop}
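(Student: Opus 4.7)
The plan is to adapt the classical harmonic-comparison scheme to this linear-growth $BV$ $\omega$-minimality setting. Set
$$a(x):=(u)_{B_R}+(Du)_{B_R}(x-x_0),$$
so that $\lvert\nabla a\rvert<m$ and $\fint_{B_R}E(D(u-a))=\mathscr{E}(R)$. First, using Lemma~\ref{lem:BVFubini} with $r_1=3R/4$, $r_2=R$, I would select a radius $\rho\in(3R/4,R)$ for which $\lvert Du\rvert(\partial B_\rho)=0$, $u\vert_{\partial B_\rho}\in BV(\partial B_\rho,\R^N)$, and
$$\fint_{\partial B_\rho}\lvert D_\tau(u-a)\rvert\leq \tfrac{C}{R}\fint_{B_R}\lvert Du-\nabla a\rvert.$$
Next I apply the harmonic-approximation Proposition~\ref{prop:1harapprox} on $B_\rho$ with $\tilde F:=F_{\nabla a}$ to obtain an $\tilde F''(0)$-harmonic map $h$ agreeing with $u$ on $\partial B_\rho$. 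Since $\varepsilon\leq C(m)\omega(R)\leq 1$ and $\sqrt{\varepsilon}^q\leq\sqrt{\varepsilon}$, and since Lemma~\ref{lem:Emeanest2} upgrades $\mathscr{E}(R)\leq 1$ to $\fint_{B_R}\lvert Du-\nabla a\rvert\leq\sqrt{3\mathscr{E}(R)}$, the estimates \eqref{eq:1harapproxest} and \eqref{eq:1harapprox} become
$$\Bigl(\fint_{B_\rho}\lvert\nabla h-\nabla a\rvert^r\,dx\Bigr)^{1/r}\leq C\sqrt{\mathscr{E}(R)},\qquad \fint_{B_\rho}E\Bigl(\tfrac{u-h}{\rho}\Bigr)dx\leq C\mathscr{E}(R)^q+C\sqrt{\omega(R)}.$$

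By Lemmas~\ref{lem:LH} and \ref{lem:intLip}, $\tilde F''(0)$ is Legendre--Hadamard elliptic with constants depending only on $m$, so the interior estimate of Lemma~\ref{lem:ellip1} applied with $z=\nabla a$ yields
$$\rho\sup_{B_{\rho/2}}\lvert\nabla^2 h\rvert\leq C\fint_{B_\rho}\lvert\nabla h-\nabla a\rvert\,dx\leq C\sqrt{\mathscr{E}(R)}.$$
I would then introduce the Taylor-affine map $\bar a(x):=h(x_0)+\nabla h(x_0)(x-x_0)$, which satisfies $\lvert\nabla\bar a\rvert\leq m+C\sqrt{\mathscr{E}(R)}\leq m+C=:m'$, and by a second-order Taylor expansion
$$\Bigl\lvert\tfrac{h(x)-\bar a(x)}{2\sigma R}\Bigr\rvert\leq C\sigma R\sup_{B_{\rho/2}}\lvert\nabla^2 h\rvert\leq C_1\sigma\sqrt{\mathscr{E}(R)}\quad\text{on }B_{2\sigma R},$$
whenever $2\sigma R\leq\rho/2$. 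Fix $\sigma_0\in(0,3/16)$ small enough that $C_1\sigma_0\leq 1$, so that for every $\sigma\in(0,\sigma_0]$ this pointwise quantity is at most $1$.

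For such $\sigma$ the Caccioppoli-type Proposition~\ref{prop:1caccioppoli} applied on $B_{2\sigma R}$ to $u$ and $\bar a$ (with constant $m'$), combined with Lemma~\ref{lem:Emeanest1} extended to Radon measures via Remark~\ref{rmk:intapprox}, gives
$$\mathscr{E}(\sigma R)\leq C\fint_{B_{2\sigma R}}E\Bigl(\tfrac{u-\bar a}{2\sigma R}\Bigr)dx+C\omega(2\sigma R).$$
Splitting $u-\bar a=(u-h)+(h-\bar a)$, the $(u-h)$ piece is bounded by pulling the average back to $B_\rho$ and using the scaling $E(\lambda z)\leq\lambda^2 E(z)$ (for $\lambda=\rho/(2\sigma R)\geq 1$) from Lemma~\ref{lem:Eest}, together with $\lvert B_\rho\rvert/\lvert B_{2\sigma R}\rvert\leq C\sigma^{-n}$; this contributes at most $C\sigma^{-(n+2)}(\mathscr{E}(R)^q+\sqrt{\omega(R)})$. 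The $(h-\bar a)$ piece satisfies $E(\cdot)\leq a_2\lvert\cdot\rvert^2\leq C\sigma^2\mathscr{E}(R)$ by the previous pointwise bound. Since $\omega(2\sigma R)\leq\omega(R)\leq\sqrt{\omega(R)}$ by \ref{it:omega1} absorbs into $C\sigma^{-(n+2)}\sqrt{\omega(R)}$, and writing $\mathscr{E}(R)^q=\mathscr{E}(R)^{q-1}\mathscr{E}(R)$, these assemble into the asserted bound. The range $\sigma\in(\sigma_0,1)$ is handled by the crude estimate $\mathscr{E}(\sigma R)\leq 4\sigma^{-n}\mathscr{E}(R)\leq C\sigma_0^{-n-2}\sigma^2\mathscr{E}(R)$, absorbed into $c$.

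I expect the delicate balance of the Taylor piece to be the main obstacle: producing the factor $\sigma^2$ rather than $\sigma$ requires the pointwise bound $\lvert(h-\bar a)/(2\sigma R)\rvert\leq 1$, which in turn demands the improved $L^1$ estimate $\fint\lvert Du-\nabla a\rvert\leq C\sqrt{\mathscr{E}(R)}$ from Lemma~\ref{lem:Emeanest2}, and so relies on the second smallness hypothesis $\fint\lvert Du-(Du)_R\rvert\leq 1$. A secondary technical subtlety is the transition from $B_R$ to the auxiliary $B_\rho$: because $u$ is only $BV$, Proposition~\ref{prop:1harapprox} cannot be applied on an arbitrary ball and forces the Fubini-type radius selection of Lemma~\ref{lem:BVFubini} to couple the boundary trace $u\vert_{\partial B_\rho}$ quantitatively to the bulk quantity $\fint_{B_R}\lvert Du-\nabla a\rvert$.
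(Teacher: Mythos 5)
Your proposal follows essentially the same route as the paper's proof: the same affine comparison map $a$, the same Fubini-type radius selection, the same harmonic approximation on $B_\rho$, the same Taylor-affine map of the harmonic comparison, and the same Caccioppoli--split--assemble chain. Two small remarks: (i) your normalized Fubini estimate $\fint_{\partial B_\rho}\lvert D_\tau(u-a)\rvert \leq \tfrac{C}{R}\fint_{B_R}\lvert Du-\nabla a\rvert$ has a spurious $1/R$ (the averaged form should read $\fint_{\partial B_\rho}\lvert D_\tau(u-a)\rvert \leq C\fint_{B_R}\lvert Du-\nabla a\rvert$, which is what you actually use); and (ii) the requirement $C_1\sigma_0\leq 1$ so that the pointwise Taylor remainder is $\leq 1$ is unnecessary, since $E(t)\leq t^2$ holds for all $t\geq 0$ by Lemma~\ref{lem:Eest}, not just $t\leq 1$ — the paper dispenses with it.
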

	\begin{proof}
	When $\sigma \geq \frac{1}{5}$, (\ref{eq:1excessest}) is easy to show, and thus we only consider the case $\sigma \in (0,\frac{1}{5})$. Set $a(x) = u_{B_R} +(Du)_{B_R}(x-x_0)$, $\tilde{u}=u-a$ and $\tilde{F}=F_{\nabla a}$. Take $\rho\in (\frac{9}{10}R,R)$ such that $\lvert D\tilde{u}\rvert(\partial B_{\rho})=0$ and $\tilde{u}\vert_{\partial B_{\rho}}\in BV(\partial B_{\rho},\R^N)$, then by Lemma \ref{lem:BVbdry} and \ref{lem:BVFubini}, we have $\tilde{u}\vert_{\partial B_{\rho}} \in W^{1-\frac{1}{r},r}(\partial B_{\rho},\R^N)$ ($r=\frac{n}{n-1}$ if $n\geq 3$, $r\in(1,2)$ if $n=2$), and 
	\begin{equation}\label{eq:1exdecaymid}
	[\tilde{u}\vert_{\partial B_{\rho}}]_{W^{1-\frac{1}{r},r}} \leq C\int_{\partial B_{\rho}}\lvert D_{\tau}(\tilde{u}\vert_{\partial B_{\rho}})\rvert \leq \frac{C}{R}\int_{B_R}\lvert D\tilde{u}\rvert. 
	\end{equation} Let $h$ be the harmonic map determined by (\ref{eq:1ellipsystem}) with $R$ replaced by ${\rho}$. We moreover define \[ \tilde{h}=h-a, \quad a_1(x)=\tilde{h}(x_0)+\nabla \tilde{h}(x_0)(x-x_0),\quad a_0=a+a_1. \] Then Lemma \ref{lem:ellip1}, Remark \ref{rmk:ellipest} and (\ref{eq:1exdecaymid}) imply
		\begin{align*}
		&\ \lvert \nabla \tilde{h}(x_0)\rvert\leq \sup_{B_{\frac{\rho}{2}}} \lvert \nabla \tilde{h}\rvert \leq C\fint_{B_{\rho}}\lvert \nabla \tilde{h}\rvert\id x\leq C\left(\fint_{B_{\rho}}\lvert \nabla \tilde{h}\rvert^r\id x \right)^{\frac{1}{r}}\\
		\leq &\ [\tilde{u}\vert_{\partial B_{\rho}}]_{W^{1-\frac{1}{r},r}}\leq \frac{C}{R{\rho}^{n-1}}\int_{B_R}\lvert D\tilde{u}\rvert \leq C\fint_{B_R}\lvert D\tilde{u}\rvert.
		\end{align*} Then by assumption, it is possible to control $\abs{\nabla a_0}$ as follows \[ \lvert \nabla a_0\rvert \leq \lvert \nabla a\rvert+\lvert \nabla a_1\rvert \leq \lvert (Du)_{B_R}\rvert+C\fint_{B_R}\lvert D\tilde{u}\rvert \leq m+C =:C_m. \]
	\par For any $\sigma \in(0,\frac{1}{5})$, we have $2\sigma R<\frac{\rho}{2}$. Lemma \ref{lem:Emeanest1} gives
		\begin{equation}\label{eq:1exdecaymid1}
		 \fint_{B_{\sigma R}}E(Du-(Du)_{B_{\sigma R}}) \leq 4\fint_{B_{\sigma R}}E(D(u-a_0)), 
		\end{equation} and inequality (\ref{eq:1caccioppoli}) implies 
		\begin{align}
		&\ \fint_{B_{\sigma R}}E(D(u-a_0)) \leq C\left(\fint_{B_{2\sigma R}}E\brac{\frac{u-a_0}{2\sigma R}}\id x +\omega(2\sigma R) \right) \label{eq:1exdecaymid2}\\
		\leq&\ 2C\fint_{B_{2\sigma R}}\left(E\left(\frac{\tilde{u}-\tilde{h}}{2\sigma R}\right)+E\left(\frac{\tilde{h}-a_1}{2\sigma R}\right) \right)\id x+C\omega(2\sigma R).\notag
		\end{align} By Lemma \ref{lem:ellip1} we have, for $x\in B_{2\sigma R}$,
		\begin{align*}
		\frac{\lvert \tilde{h}(x)-a_1(x)\rvert}{2\sigma R} &\leq C\sup_{B_{2\sigma R}}\lvert \nabla^2 h\rvert \frac{\lvert x-x_0\rvert^2}{2\sigma R} \leq C\sigma R \sup_{B_{\frac{\rho}{2}}}\lvert \nabla ^2 \tilde{h}\rvert\\
		&\leq C\sigma \fint_{B_{\rho}}\lvert \nabla \tilde{h}\rvert\id x \leq C\sigma \fint_{\partial B_{\rho}}\lvert D_{\tau}(\tilde{u}\vert_{\partial B_{\rho}})\rvert\\
		&\overset{(\ref{eq:1exdecaymid})}{\leq} C\sigma \fint_{B_R}\lvert Du-(Du)_{B_R}\rvert \\
		&\leq C\sigma \left(\fint_{B_R}E(Du-(Du)_{B_R}) \right)^{\frac{1}{2}}.
		\end{align*} The assumption implies $\fint_{B_R}E(D\tilde{u})\leq \fint_{B_R}\abs{D\tilde{u}}\leq 1$, then Lemma \ref{lem:Emeanest2} can be used to get the last line. Thus, the integral involving $\tilde{h}-a_1$ is controlled by the following
		\begin{align}
		\fint_{B_{2\sigma R}}E\left(\frac{\tilde{h}-a_1}{2\sigma R}\right) \id x &\leq E\left(C\sigma \left(\fint_{B_R}E(Du-(Du)_{B_R}) \right)^{\frac{1}{2}} \right)\label{eq:1exdecaymid3} \\
		&\leq a_2C\sigma^2 \fint_{B_R}E(Du-(Du)_{B_R}). \notag
		\end{align} The term concerning $\tilde{u}-\tilde{h}$ can be estimated with (\ref{eq:1harapprox}): 
		\begin{align}
		&\ \fint_{B_{2\sigma R}} E\left(\frac{\tilde{u}-\tilde{h}}{2\sigma R}\right)\id x \leq \frac{C}{\sigma^{n+2}}\fint_{B_{\rho}}E\left(\frac{\tilde{u}-\tilde{h}}{\rho}\right)\id x  \label{eq:1exdecaymid4} \\
		\leq&\ \frac{C}{\sigma^{n+2}}\left(\left(\fint_{B_{\rho}}E(D(u-a))\right)^q+\sqrt{\varepsilon}+\sqrt{\varepsilon}^q \right), \notag
		\end{align} where $\varepsilon=\fint_{B_{\rho}}(a_6+a_7\abs{Du})$. Considering $\lvert Du\rvert \leq \lvert Du-(Du)_{B_R}\rvert+\lvert (Du)_{B_R}\rvert$, we obtain by assumption that $\varepsilon\leq C\omega(\rho)$. The above estimates (\ref{eq:1exdecaymid1})-(\ref{eq:1exdecaymid4}) and the estimate for $\varepsilon$ together give 
		\begin{align}
		\fint_{B_{\sigma R}}E(Du-(Du)_{B_{\sigma R}}) \leq &\ \frac{C}{\sigma^{n+2}}\left(\left(\fint_{B_R}E(Du-(Du)_{B_R}) \right)^{q}+\sqrt{\omega(R)} \right) \\
		& \hspace{0.6cm}+C\sigma^2\fint_{B_R}E(Du-(Du)_{B_R}) + C\omega(2\sigma R),\notag
		\end{align} which is exactly (\ref{eq:1excessest}).
	\end{proof}

\subsection{Iteration}\label{subsec:1iteration}
	\par Now it is the time to do iteration with (\ref{eq:1excessest}) and get a first regularity result of $u$. Before that we present a lemma concerning the summability of $\omega$ to some power.
	\begin{lem}\label{lem:omegaest1}
	For any fixed $r>0$, $\alpha \in [\frac{1}{4},1)$ and $\tau \in (0,1)$, we have 
		\begin{equation}\label{eq:omegaest1}
		\sum_{j=0}^{\infty} \omega^{\alpha}(\tau^jr)\leq \frac{2\alpha\beta}{1-\tau^{2\alpha\beta}}\Xi_{\alpha}(r),
		\end{equation} where $\beta$ is as in \textup{\ref{it:omega2}}. In particular, 
		\begin{equation}
		\omega^{\alpha}(r)\leq \Xi_{\alpha}(r).
		\end{equation} 
	\end{lem}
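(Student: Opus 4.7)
The plan is to exploit the monotonicity in \textnormal{\ref{it:omega2}} as a one-sided pointwise comparison between $\omega^{\alpha}$ at the dyadic radii $\tau^{j}r$ and the integrand $\omega^{\alpha}(t)/t$ of $\Xi_{\alpha}$ on each annular interval $[\tau^{j+1}r,\tau^{j}r]$. The assumption that $t\mapsto \omega(t)/t^{2\beta}$ is non-increasing, after raising both sides of $\omega(t)/t^{2\beta}\ge \omega(s)/s^{2\beta}$ to the positive power $\alpha$, gives
\[
\omega^{\alpha}(t)\ \ge\ (t/s)^{2\alpha\beta}\omega^{\alpha}(s)\qquad \text{for all }0<t\le s.
\]

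With $s=\tau^{j}r$, dividing by $t$ and integrating over $t\in(\tau^{j+1}r,\tau^{j}r)$ then yields the per-scale estimate
\[
\int_{\tau^{j+1}r}^{\tau^{j}r}\frac{\omega^{\alpha}(t)}{t}\id t\ \ge\ \frac{\omega^{\alpha}(\tau^{j}r)}{(\tau^{j}r)^{2\alpha\beta}}\int_{\tau^{j+1}r}^{\tau^{j}r}t^{2\alpha\beta-1}\id t\ =\ \frac{1-\tau^{2\alpha\beta}}{2\alpha\beta}\,\omega^{\alpha}(\tau^{j}r),
\]
so that each summand $\omega^{\alpha}(\tau^{j}r)$ on the left of \eqref{eq:omegaest1} is controlled by a constant multiple of the integral of $\omega^{\alpha}/t$ over the corresponding annulus. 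Summing this inequality over $j\ge 0$ and noting that the intervals $(\tau^{j+1}r,\tau^{j}r)$ partition $(0,r)$ up to a Lebesgue null set would then telescope the right-hand side into $\Xi_{\alpha}(r)$, producing \eqref{eq:omegaest1} with the explicit constant $2\alpha\beta/(1-\tau^{2\alpha\beta})$.

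For the second inequality, I would apply the same pointwise comparison directly on the whole interval $(0,r)$ instead of an annulus, obtaining
\[
\Xi_{\alpha}(r)\ \ge\ \omega^{\alpha}(r)\int_{0}^{r}\frac{(t/r)^{2\alpha\beta}}{t}\id t\ =\ \frac{\omega^{\alpha}(r)}{2\alpha\beta},
\]
which gives $\omega^{\alpha}(r)\le 2\alpha\beta\,\Xi_{\alpha}(r)$, i.e., the claimed bound up to the absolute factor $2\alpha\beta$; the same conclusion also follows from the first inequality by retaining only the $j=0$ summand and letting $\tau\to 0^{+}$, in which case $2\alpha\beta/(1-\tau^{2\alpha\beta})\to 2\alpha\beta$.

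I do not foresee any genuine obstacle here: the entire argument reduces to a quantitative comparison between a geometric sum and a scale-invariant integral, powered by the scale-invariant monotonicity furnished by \textnormal{\ref{it:omega2}}, and everything else is elementary.
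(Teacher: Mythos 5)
Your derivation of the main inequality \eqref{eq:omegaest1} is correct and is essentially the paper's own proof: both rewrite $\Xi_{\alpha}(r)$ as the sum of integrals over the geometric annuli $(\tau^{j+1}r,\tau^{j}r)$, use \textnormal{\ref{it:omega2}} in the form $\omega^{\alpha}(t)\ge (t/\tau^{j}r)^{2\alpha\beta}\omega^{\alpha}(\tau^{j}r)$ on each annulus, and integrate the power $t^{2\alpha\beta-1}$ explicitly; the index shift is immaterial.

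For the ``in particular'' claim you are in fact being more careful than the paper. Your direct computation gives $\omega^{\alpha}(r)\le 2\alpha\beta\,\Xi_{\alpha}(r)$, and this extra factor is not an artifact of your argument: for $\omega(t)=t^{2\beta}$ one has $\omega^{\alpha}(r)/\Xi_{\alpha}(r)=2\alpha\beta$ exactly, so the factor-free bound $\omega^{\alpha}(r)\le\Xi_{\alpha}(r)$ as stated in the lemma only holds when $2\alpha\beta\le 1$. Since the lemma allows any $\alpha\in[\tfrac14,1)$ and any $\beta\in(0,1)$, this condition can fail (e.g.\ $\alpha=\beta=0.9$). It is harmless in context because the lemma is invoked in the paper only with $\alpha\in\{\tfrac14,\tfrac12\}$, where $2\alpha\beta\le\beta<1$; still, the precise consequence of \eqref{eq:omegaest1} is the one you wrote, $\omega^{\alpha}(r)\le 2\alpha\beta\,\Xi_{\alpha}(r)$, obtained either directly or by keeping the $j=0$ term and letting $\tau\to 0^{+}$ as you indicate. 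So your proposal is correct, and it correctly flags that the stated ``in particular'' needs the additional (and in the applications satisfied) restriction $2\alpha\beta\le1$.
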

	\begin{proof}
	The idea is to transform the sum on the left-hand side into that of a series of integrals on many subintervals of $[0,r]$. Indeed, by \ref{it:omega2}
		\begin{align*}
		\int_{\tau^{j}r}^{\tau^{j-1}r} \frac{\omega^{\alpha}(\rho)}{\rho}\id \rho & \geq \frac{\omega^{\alpha}(\tau^{j-1}r)}{(\tau^{j-1}r)^{2\alpha\beta}}\int_{\tau^jr}^{\tau^{j-1}r} \rho^{2\alpha\beta-1}\id \rho \\
		&=\frac{\omega^{\alpha}(\tau^{j-1}r)}{(\tau^{j-1}r)^{2\alpha\beta}}\frac{1}{2\alpha\beta} ((\tau^{j-1}r)^{2\alpha\beta}-(\tau^jr)^{2\alpha\beta})\\
		&= \frac{1}{2\alpha\beta}(1-\tau^{2\alpha\beta})\,\omega^{\alpha}(\tau^{j-1}r). 
		\end{align*} Summing over $j$ we obtain \[ \sum_{j=0}^{\infty}\omega^{\alpha}(\tau^jr) \leq \frac{2\alpha\beta}{1-\tau^{2\alpha\beta}}\int_0^r \frac{\omega^{\alpha}(\rho)}{\rho}\id \rho = \frac{2\alpha\beta}{1-\tau^{2\alpha\beta}}\Xi_{\alpha}(r). \]
	\end{proof}
	\begin{prop}\label{prop:epsreg1}
	Suppose that $F\colon \R^{N\times n} \to \R$ satisfies \textnormal{\ref{it:intp}-\ref{it:intreg}} with $p=1$, $\omega\colon [0,\infty) \to [0,\infty)$ satisfies \textnormal{\ref{it:omega1}-\ref{it:omega3}} and $u\in BV_{loc}(\Omega,\R^N)$ is an $\omega$-minimizer of $\bar{\F}$ with constant $R_0>0$. For any $\alpha \in (\frac{\beta}{2},1)$ and $m>0$, there exist $C = C(m,n,N,L,\ell,\alpha,\beta)>0,\varepsilon_m>0$ and $R_1>0$ such that the following holds: if $B_R=B(x_0,R) \ssubset \Omega$ is such that 
		\begin{equation}\label{eq:epsregcon}
		\lvert (Du)_{B_R}\rvert<m, \quad \mathscr{E}(x_0,R)<\frac{\varepsilon_m}{2},\quad R<R_1,
		\end{equation} then for any $0<\rho<R$,
		\begin{equation}\label{eq:schauder1}
		\mathscr{E}(\rho) \leq C \left(\frac{\rho}{R} \right)^{2\alpha}\E(R)+C\sqrt{\omega(\rho)}.
		\end{equation}
	\end{prop}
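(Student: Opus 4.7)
The plan is to iterate the single-step decay (\ref{eq:1excessest}) at a geometric sequence of radii $\tau^j R$ and then interpolate. First I fix $\tau\in(0,1/5)$, depending only on $\alpha,n,N,L,\ell$, so small that $c\tau^{2}\leq \tfrac{1}{4}\tau^{2\alpha}$; this is possible because $\alpha<1$. With $q\in(1,n/(n-1))$ fixed, I then choose $\varepsilon_m>0$ so that $c\tau^{-(n+2)}\varepsilon_m^{q-1}\leq \tfrac{1}{4}\tau^{2\alpha}$. Under these choices, whenever $\mathscr{E}(\tau^jR)<\varepsilon_m$ and the mean gradient stays below $m$, Proposition \ref{prop:1excessest} simplifies to
\begin{equation*}
\mathscr{E}(\tau^{j+1}R)\leq \tfrac{1}{2}\tau^{2\alpha}\,\mathscr{E}(\tau^jR)+c\,\tau^{-(n+2)}\sqrt{\omega(\tau^jR)}.
\end{equation*}

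The core is a simultaneous induction on $j$ maintaining $\mathscr{E}(\tau^jR)<\varepsilon_m$ and $\lvert(Du)_{B_{\tau^jR}}\rvert<m$. Unrolling the displayed inequality gives
\begin{equation*}
\mathscr{E}(\tau^jR)\leq (\tfrac{1}{2}\tau^{2\alpha})^j\mathscr{E}(R)+c\sum_{k=0}^{j-1}(\tfrac{1}{2}\tau^{2\alpha})^{j-1-k}\sqrt{\omega(\tau^kR)}.
\end{equation*}
Condition \ref{it:omega2} yields $\sqrt{\omega(\tau^kR)}\leq \tau^{-\beta(j-1-k)}\sqrt{\omega(\tau^{j-1}R)}$, and since $2\alpha>\beta$ by hypothesis, the geometric series in $\tau^{2\alpha-\beta}/2$ converges, so the error sum is bounded by $C\sqrt{\omega(\tau^{j-1}R)}$; choosing $R_1$ small (to shrink $\omega(R_1)$) and $\varepsilon_m$ small keeps $\mathscr{E}(\tau^jR)<\varepsilon_m$. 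To propagate the bound on the mean, I use
\begin{equation*}
\lvert(Du)_{B_{\tau^{k+1}R}}-(Du)_{B_{\tau^kR}}\rvert\leq \tau^{-n}\fint_{B_{\tau^kR}}\lvert Du-(Du)_{B_{\tau^kR}}\rvert,
\end{equation*}
and Lemma \ref{lem:Emeanest2} to estimate the right-hand side by $C\sqrt{\mathscr{E}(\tau^kR)}\leq C\tau^{\alpha k}\sqrt{\mathscr{E}(R)}+C\,\omega(\tau^{k-1}R)^{1/4}$. Summing in $k$, the first contribution is a convergent geometric series while the second is controlled by $C\,\Xi_{1/4}(R)$ via Lemma \ref{lem:omegaest1}; both are arbitrarily small after shrinking $\varepsilon_m$ and $R_1$, so the telescoping sum stays in a neighbourhood of $(Du)_{B_R}$ and thus below $m$.

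Finally, for arbitrary $\rho\in(0,R)$, pick $j\geq 0$ with $\tau^{j+1}R<\rho\leq \tau^jR$. Lemma \ref{lem:Emeanest1} gives $\mathscr{E}(\rho)\leq C\tau^{-n}\mathscr{E}(\tau^jR)$, and \ref{it:omega2} yields $\sqrt{\omega(\tau^{j-1}R)}\leq \tau^{-2\beta}\sqrt{\omega(\rho)}$, so the bound (\ref{eq:schauder1}) follows after absorbing constants depending on $\tau$ into $C$. The main technical obstacle is the simultaneous closing of both induction hypotheses: the excess decay itself demands $\alpha>\beta/2$ so that $\tau^{2\alpha-\beta}/2<1$, whereas the control of $(Du)_{B_{\tau^jR}}$ leans on the Dini-type condition \ref{it:omega3} to make $\sum_k\omega^{1/4}(\tau^kR)$ finite; these two ingredients together, balanced against the initial smallness of $\mathscr{E}(R)$ and $\omega(R)$, are what let the iteration close.
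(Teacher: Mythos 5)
Your proposal is correct and follows essentially the same route as the paper: a single-step decay from Proposition \ref{prop:1excessest}, a choice of $\sigma$ (your $\tau$) and $\varepsilon_m$ to absorb the $\sigma^2$ and $\mathscr{E}(R)^{q-1}$ terms into $\sigma^{2\alpha}$, and then a simultaneous induction keeping the excess below $\varepsilon_m$ and the mean gradient below $m$ (using \ref{it:omega2} to compare $\omega$ across scales and Lemma \ref{lem:omegaest1} with \ref{it:omega3} to sum $\omega^{1/4}(\tau^k R)$), finishing with the standard interpolation to arbitrary $\rho$. The only differences from the paper's write-up are cosmetic (choice of $1/4$ versus $1/2$ for the absorption constants, and pulling the $\omega$-term to scale $\tau^{j-1}R$ rather than $\tau^j R$ before the final comparison to $\omega(\rho)$).
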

	\begin{proof}
	By Lemma \ref{lem:Emeanest2}, we have \[ \fint_{B_R}\lvert Du-(Du)_{B_R}\rvert \leq \sqrt{3\mathscr{E}(R)} \] if $\mathscr{E}(R)\leq 1$. Then set $\varepsilon_m<\frac{1}{3}$ so that $\fint_{B_R}\lvert Du-(Du)_{B_R}\rvert<1$. Meanwhile, we take $R_1$ such that $\omega(R_1)<1$. The assumptions of Proposition \ref{prop:1excessest} are satisfied and then, for some fixed $q\in (1,\frac{n}{n-1})$, 
		\begin{equation}\label{eq:1iteramid1}
		\mathscr{E}(\sigma R)\leq c(\sigma^2 + \sigma^{-(n+2)}\mathscr{E}(R)^{q-1})\mathscr{E}(R)+c\sigma^{-(n+2)}\sqrt{\omega(R)}, 
		\end{equation} where $c=c(m,n,N,L,\ell,q)$. Set $C_{m+1}=c(m+1,n,N,L,\ell,q))$. Take $\sigma \in (0,\frac{1}{5})$ and then $\varepsilon_m \in (0,\frac{1}{3})$ such that \[ C_{m+1}\sigma^2 < \frac{1}{2}\sigma^{2\alpha}, \quad C_{m+1}\sigma^{-(n+2)}\varepsilon_m^{q-1}<\frac{1}{2}\sigma^{2\alpha}. \] In this case, with $c_1:=C_{m+1}\sigma^{-(n+2)}$, (\ref{eq:1iteramid1}) becomes \[ \mathscr{E}(\sigma R)\leq \sigma^{2\alpha}\mathscr{E}(R)+c_1\sqrt{\omega(R)}. \]
	\par To do the iteration, we consider the following
		\leqnomode \vspace{-.25cm} \begin{center} \begin{minipage}{.75\textwidth}
		\begin{align}
		&  \lvert (Du)_{B_{\sigma^jR}}\rvert \leq m+1, \label{it:1iteration1} \tag{\textbf{I}$_j$}\\
		&  \E(\sigma^jR) \leq \sigma^{2j\alpha} \E(R) +c_2\sqrt{\omega(\sigma^jR)}, \label{it:1iteration2} \tag{\textbf{II}$_j$}\\
		&  \E(\sigma^j R)\leq \varepsilon_m, \label{it:1iteration3} \tag{\textbf{III}$_j$}
		\end{align} \end{minipage} \end{center}
where $c_2=\frac{c_1}{\sigma^{\beta}-\sigma^{2\alpha}}$. The three hold for $j=0$. Assume that they hold for $j=0,1,\dots,k-1$ with $k\geq 1$ and do induction. Then (\ref{it:1iteration3}) together with $\varepsilon_m<\frac{1}{3}$ implies $\fint_{B_{\sigma^jR}}\lvert Du-(Du)_{B_{\sigma^jR}}\rvert<1$. Combining this with (\ref{it:1iteration1}) we have, by Proposition \ref{prop:1excessest} and the choice of $\sigma, \varepsilon_m$, 
		\begin{align*}
		\E(\sigma^kR) &\leq \sigma^{2k\alpha}\E(R) +c_1 \sum_{j=0}^{k-1}\sigma^{2(k-j-1)\alpha}\sqrt{\omega(\sigma^jR)}\\
		&\leq \sigma^{2k\alpha}\E(R) +c_1 \sum_{j=0}^{k-1}\sigma^{(j-k)\beta+2(k-j-1)\alpha}\sqrt{\omega(\sigma^kR)}\\
		&\leq \sigma^{2k\alpha}\E(R) + \frac{c_1}{\sigma^{\beta}-\sigma^{2\alpha}}\sqrt{\omega(\sigma^kR)},
		\end{align*} 
which actually gives (II$_k$). Take $\sigma$ and $R_1$ small enough such that $\sigma^{2\alpha}<\frac{1}{2}, c_2\sqrt{\omega(R_1)}<\frac{\varepsilon_m}{2}$, and we furthermore have (III$_k$). Finally, to get (I$_k$) we use the triangle inequality \[ \lvert (Du)_{B_{\sigma^kR}}\rvert \leq \lvert (Du)_{B_R}\rvert + \sum_{j=0}^{k-1}\lvert (Du)_{B_{\sigma^{j+1}R}}-(Du)_{B_{\sigma^jR}}\rvert. \] For any $j\in \{0,1,\dots,k-1\}$, by Lemma \ref{lem:Emeanest2}, (\ref{it:1iteration3}) and (\ref{it:1iteration2}) we have
		\begin{align*}
		&\ \lvert (Du)_{B_{\sigma^{j+1}R}}-(Du)_{B_{\sigma^jR}}\rvert \leq \sigma^{-n}\fint_{B_{\sigma^jR}} \lvert Du-(Du)_{B_{\sigma^jR}}\rvert\\
		 \leq &\ \sigma^{-n}\sqrt{3\E(\sigma^jR)} \leq \sigma^{-n}(3\sigma^{2j\alpha}\E(R)+3c_2\sqrt{\omega(\sigma^jR)})^{\frac{1}{2}}\\
		\leq &\ \sigma^{-n}(\sqrt{3}\sigma^{j\alpha}\sqrt{\E(R)}+\sqrt{3c_2}\omega^{\frac{1}{4}}(\sigma^jR)).
		\end{align*} Sum up the above from $0$ to $k-1$ with the help of Lemma \ref{lem:omegaest1} to obtain
		\begin{align*}
		\lvert (Du)_{B_{\sigma^kR}}\rvert  &\leq m+\sqrt{3}\sigma^{-n}\sum_{j=0}^{k-1}(\sigma^{j\alpha}\sqrt{\E(R)}+\sqrt{c_2}\omega^{\frac{1}{4}}(\sigma^jR))\\
		&\leq m+\sqrt{3}\sigma^{-n}\left(\frac{\sqrt{\E(R)}}{1-\sigma^{\alpha}}+\frac{\sqrt{c_2}\beta}{2(1-\sigma^{\frac{\beta}{2}})}\Xi_{\frac{1}{4}}(R)\right).
		\end{align*} We require \[ \frac{\sqrt{3}\sigma^{-n}}{1-\sigma^{\alpha}}\sqrt{\varepsilon_m}<\frac{1}{2}, \quad \frac{\sqrt{3c_2}\beta\sigma^{-n}}{2(1-\sigma^{\frac{\beta}{2}})}\Xi_{\frac{1}{4}}(R_1)<\frac{1}{2}, \] which can be satisfied when $\varepsilon_m\ll 1, R_1 \ll 1$. Then (I$_k$) also holds true. Notice that in the above we have chosen $\sigma, \varepsilon_m,R_1$ in order such that $\lvert (Du)_{B_R}\rvert<m, \E(R)<\frac{\varepsilon_m}{2}$ and $R<R_1$ imply (\ref{it:1iteration1})-(\ref{it:1iteration3}) for any $j\in \N$. Given any $\rho\in (0,R)$, we can take $\sigma^{k+1}R <\rho\leq \sigma^kR$ and get the desired estimate for $\E(\rho)$ by controlling it with $\E(\sigma^kR)$.
	\end{proof}
	\par We claim that there exists a relatively closed null set $S_u\subset \Omega$ such that $u \in C_{loc}^1(\Omega\setminus S_u,\R^N)$ and $Du$ locally has the modulus of continuity $\rho \mapsto \rho^{\alpha}+\Xi_{\frac{1}{4}}(\rho)$ on $\Omega \setminus S_u$. Actually, for any $x_0\in \Omega$ such that 
	\begin{equation}
	 \limsup_{R\to 0^+}\lvert (D u)_{x_0,R}\rvert<\infty \quad \mbox{and}\quad  \liminf_{R\to 0^+}\fint_{B(x_0,R)}E(D u-(D u)_{x_0,R})\id x =0,
	\end{equation} one can show that $Du =\nabla u \Le^n$ and $\nabla u$ has the desired modulus of continuity in a neighbourhood of $x_0$ by Proposition 4.9 in \cite{KriGm19} and mollifying the proof of Theorem 2.9 in \cite{Giusti03}. Thus, the set $S_u \subset \Omega$ is relatively closed and null with respect to the Lebesgue measure. 

\subsection{Improvement of regularity}\label{subsec:1improve}
	\par With the regularity proved above, it is possible to further show that the local modulus of continuity of $Du$ is $\rho\mapsto \rho^{\alpha}+\Xi_{\frac{1}{2}}(\rho)$ for any $\alpha \in(0,1)$. For any open set $\Omega^{\prime}\ssubset \Omega\setminus S_u$, we assume $\| u\|_{C^1(\Omega^{\prime})} \leq M(\Omega^{\prime})<\infty$. Then it is sufficient to perform the procedure in the quadratic case as in \cite{Giusti03}, \textsection 9.4. For completeness, we sketch the process here.
	\begin{prop}\label{prop:1caccioppoli2}
	Suppose that $F\colon \R^{N\times n} \to \R$ satisfies \textnormal{\ref{it:intp}-\ref{it:intreg}} with $p=1$, $\omega\colon [0,\infty) \to [0,\infty)$ satisfies \textnormal{\ref{it:omega1}-\ref{it:omega3}} and $u\in BV_{loc}(\Omega,\R^N)$ is an $\omega$-minimizer of $\bar{\F}$ with constant $R_0>0$. Let $S_u$ be the relatively closed singular set as in last subsection. Take $\Omega^{\prime} \ssubset \Omega\setminus S_u$ and $M=M(\Omega^{\prime})>0$ as above. For any ball $B(x_0,R)\ssubset \Omega^{\prime}$ with $R<R_0$, if $a\colon \R^n\to \R^N$ is an affine map with $\lvert \nabla a\rvert\leq m$ for some $m>0$, then there exists $C=C(m,n,N,L,\ell,M)>0$ such that 
		\begin{equation}
		\fint_{B_{\frac{R}{2}}}\lvert \nabla (u-a)\rvert^2\id x\leq C \left(\fint_{B_R}\frac{\lvert u-a\rvert^2}{R^2}\id x +\omega(R)\right).
		\end{equation}
	\end{prop}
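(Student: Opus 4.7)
The plan is to reduce this quadratic Caccioppoli directly to the $E$-version already proved in Proposition~\ref{prop:1caccioppoli}, exploiting the uniform $C^1$-bound on $u$ to convert between the two expressions. Since $B_R \ssubset \Omega' \ssubset \Omega$, $R<R_0$ and $\abs{\nabla a}\leq m$, Proposition~\ref{prop:1caccioppoli} applies without modification and yields
\[
\int_{B_{R/2}} E(\nabla(u-a))\id x \;\leq\; c\Bigl(\int_{B_R} E\Bigl(\frac{u-a}{R}\Bigr)\id x + \omega(R) R^n\Bigr),
\]
with $c=c(m,n,N,L,\ell)$. Note that, because $u\in C^1(\Omega')$, there is no singular part to manage: on $B_R$ we have $D(u-a)=\nabla(u-a)\Le^n$, so the left-hand side is a classical integral of $E(\nabla(u-a))$.

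It then suffices to sandwich $E$ between multiples of $\abs{\cdot}^2$ on the two sides. Set $K:=M+m$; the hypothesis gives $\abs{\nabla(u-a)}\leq K$ pointwise on $B_R$. The identity $E(z)=\abs{z}^2/(\sqrt{1+\abs{z}^2}+1)$ produces the lower bound $E(z)\geq (2\sqrt{1+K^2})^{-1}\abs{z}^2$ on $\{\abs{z}\leq K\}$, whereas the elementary estimate $\sqrt{1+t}\leq 1+t/2$ produces the unconditional upper bound $E(z)\leq \abs{z}^2/2$ for every $z\in\mathbb{H}$. Inserting these two conversions into the $E$-Caccioppoli and dividing by $\abs{B_{R/2}}$ (with $\abs{B_R}/\abs{B_{R/2}}=2^n$) yields the stated estimate; the factor $2\sqrt{1+K^2}$ absorbed into the new constant is exactly what produces the additional $M$-dependence in $C=C(m,n,N,L,\ell,M)$.

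I do not anticipate any genuine obstacle here: the proof is essentially a one-step reduction. The only mildly delicate point is a conceptual asymmetry worth flagging, namely that the left-hand side conversion $E\sim\abs{\cdot}^2$ relies on the full pointwise gradient bound on $B_R$ provided by the $C^1$-hypothesis, while the right-hand side uses only the global (but coarser) inequality $E(z)\leq\abs{z}^2/2$, which holds without any size restriction and hence imposes no constraint on the possibly large quantity $\abs{u-a}/R$.
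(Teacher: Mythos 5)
Your proof is correct and follows exactly the paper's own argument: apply the $E$-Caccioppoli inequality of Proposition \ref{prop:1caccioppoli}, then use the unconditional upper bound $E(z)\leq |z|^2$ on the right and the pointwise bound $|\nabla(u-a)|\leq M+m$ (from the $C^1$-bound on $\Omega'$) to get the matching lower bound $E(\nabla(u-a))\geq C_{M+m}^{-1}|\nabla(u-a)|^2$ on the left. The paper simply cites (\ref{eq:Eest1}) for the upper bound rather than deriving $E(z)\leq |z|^2/2$ directly, but the substance is identical.
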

	\begin{proof}
	In Proposition \ref{prop:1caccioppoli} we have already obtained a Caccioppoli-type inequality with respect to $E$. By (\ref{eq:Eest1}), we have $E(\frac{u-a}{R})\leq a_2 \frac{\lvert u-a\rvert^2}{R^2}$. To deal with the left-hand side, notice that $\lvert \nabla (u-a)\rvert\leq \lvert \nabla u\rvert+\lvert \nabla a\rvert\leq M+m$ and then $E(\nabla (u-a))\geq C_{M+m} \lvert \nabla (u-a)\rvert^2$.
	\end{proof}
	\begin{prop}\label{prop:1higherint}
	Suppose that $F,\omega,u, S_u, \Omega^{\prime}$ and $M(\Omega^{\prime})$ are as in Proposition \textup{\ref{prop:1caccioppoli2}} and $z_0\in \R^{N\times n}$ satisfies $\lvert z_0\rvert\leq m$. There exists $q>1$ depending on $m,n,N,L,\ell,M$ and $C=C(m,n,N,L,\ell,M,q)\geq 1$, such that $\lvert \nabla u-z_0\rvert\in L^{2q}_{loc}(\Omega^{\prime})$, and for any ball $B(y_0,R)\ssubset \Omega^{\prime}$ we have
		\begin{equation}\label{eq:1higherint}
		\left(\fint_{B(y_0,\frac{R}{2})}\lvert \nabla u-z_0\rvert^{2q}\id x \right)^{\frac{1}{q}} \leq C\left(\fint_{B(y_0,R)}\lvert \nabla u-z_0\rvert^2\id x+ \omega(R) \right).
		\end{equation}
	\end{prop}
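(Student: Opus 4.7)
The plan is to establish a reverse H\"older inequality of the form
\[
\left(\fint_{B(y_0,R/2)}|\nabla u-z_0|^2\id x\right)^{1/2} \leq C\left(\fint_{B(y_0,R)}|\nabla u-z_0|^{2_\ast}\id x\right)^{1/2_\ast}+C\sqrt{\omega(R)},
\]
where $2_\ast=\tfrac{2n}{n+2}<2$, and then apply Gehring's lemma (in its perturbed formulation) to upgrade this to the desired $L^{2q}$ estimate.

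First, I would apply the Caccioppoli-type inequality from Proposition \ref{prop:1caccioppoli2} with the affine map
\[
a(x):=(u)_{B(y_0,R)}+z_0(x-y_0),
\]
which has $|\nabla a|=|z_0|\leq m$ as required. Since $u\in C^1(\Omega')$ in our setting, we may work with $\nabla u$ rather than $Du$, and the inequality gives
\[
\fint_{B(y_0,R/2)}|\nabla u-z_0|^2\id x \leq C\left(\fint_{B(y_0,R)}\frac{|u-a|^2}{R^2}\id x + \omega(R)\right).
\]
By construction the function $w:=u-a$ has mean zero on $B(y_0,R)$, so Sobolev--Poincar\'e yields
\[
\left(\fint_{B(y_0,R)}\frac{|w|^2}{R^2}\id x\right)^{1/2} \leq C\left(\fint_{B(y_0,R)}|\nabla u-z_0|^{2_\ast}\id x\right)^{1/2_\ast}.
\]
Combining the two estimates produces the reverse H\"older inequality above, with the additive error term $C\sqrt{\omega(R)}\leq C$ since $\omega\leq 1$.

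Next, the reverse H\"older inequality holds uniformly over all balls $B(y_0,R)$ with $\overline{B(y_0,2R)}\subset\Omega'$ (a harmless shrinking of $\Omega'$ if necessary), so Gehring's lemma with additive perturbation (see, e.g., \cite{Giusti03}, Chapter 6, or Giaquinta's monograph) applies: there exist $\epsilon>0$ and $C'>0$, depending only on $m,n,N,L,\ell,M$, such that $|\nabla u-z_0|\in L^{2+\epsilon}_{loc}(\Omega')$ and
\[
\left(\fint_{B(y_0,R/2)}|\nabla u-z_0|^{2+\epsilon}\id x\right)^{1/(2+\epsilon)} \leq C'\left(\fint_{B(y_0,R)}|\nabla u-z_0|^2\id x\right)^{1/2}+C'\sqrt{\omega(R)}.
\]
Setting $q:=1+\tfrac{\epsilon}{2}>1$, raising the inequality to the $2q$-th power and using $(a+b)^{2}\leq 2(a^2+b^2)$ yields exactly \textnormal{(\ref{eq:1higherint})}.

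The main delicate point is verifying that the perturbed form of Gehring's lemma applies with the additive term $\sqrt{\omega(R)}$, which varies with the ball rather than being a fixed $L^s$ function on $\Omega'$. This is easily handled because $\omega\leq 1$ (so the error term is uniformly bounded), and the standard statement covers this case; alternatively, one can replace $\sqrt{\omega(R)}$ by the constant $1$ throughout the Gehring iteration and then reintroduce the dependence on $\omega(R)$ at the final step using the sharper Caccioppoli estimate. Everything else (Sobolev--Poincar\'e, the choice of affine map, the scaling between $B(y_0,R/2)$ and $B(y_0,R)$) is routine.
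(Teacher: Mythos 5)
Your proof is correct and follows essentially the same route as the paper: apply Proposition \ref{prop:1caccioppoli2} with the affine map $a(x)=u_{B}+z_0(x-\cdot)$, invoke Sobolev--Poincar\'e on the mean-zero function $u-a$ to get a weak reverse H\"older inequality in $|\nabla u - z_0|$ with exponent $2_\ast$, and finish with a perturbed Gehring lemma. The one point you flag as delicate — that the additive term $\omega(\rho)$ varies with the ball — is dispatched more cleanly than your proposed workaround: since $\omega$ is nondecreasing, for every sub-ball $B_\rho\subset B(y_0,R)$ one simply has $\omega(\rho)\le\omega(R)$, so after fixing the outer ball the additive error is a constant and the standard Gehring statement with constant perturbation applies directly, with no need to substitute $1$ and reinsert $\omega(R)$ afterward.
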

	\begin{proof}
	Pick $B_{\rho}=B(x_0,\rho)\subset B(y_0,R)$ with $\rho <R_0$ and $a(x)=u_{B_{\rho}}+z_0(x-x_0)$. The average of $u-a$ on $B_{\rho}$ vanishes by the definition of $a$, and then the Sobolev-Poincar\'{e} inequality implies \[ \fint_{B_{\rho}}\frac{\lvert u-a\rvert^2}{\rho^2}\id x \leq C\left(\fint_{B_{\rho}}\lvert \nabla (u-a)\rvert^{2_{\ast}} \id x\right)^{\frac{2}{2_{\ast}}} =C\left(\fint_{B_{\rho}}\lvert \nabla u-z_0\rvert^{2_{\ast}}\id x \right)^{\frac{2}{2_{\ast}}}, \]where $2_{\ast}=\frac{2n}{n+2}<2$. Combining Proposition \ref{prop:1caccioppoli2} we have the weak reverse H\"{o}lder inequality
		\begin{equation}\label{eq:1wRH}
		\fint_{B_{\frac{\rho}{2}}}\lvert \nabla u-z_0\rvert^2\id x \leq C\left(\fint_{B_{\rho}}\lvert \nabla u-z_0\rvert^{2_{\ast}}\id x\right)^{\frac{2}{2_{\ast}}}+C\omega(\rho).
		\end{equation} The above estimate holds for any ball $B_{\rho} \subset B(y_0,R)$ with $\rho<R_0$ and we can replace the $\omega(\rho)$ on the right-hand side by $\omega(R)$. By the generalised Gehring lemma (see \cite{Gia83}, Chap.V or \cite{Stre84}, \textsection 2.3), we know that there is an $q_0>1$ such that $\lvert \nabla u-z_0\rvert \in L^{2q}(B(y_0,R))$ for any $q\in (1,q_0)$ with (\ref{eq:1higherint}) holding true.
	\end{proof}
	\par To get the regularity of $u$, we compare it with a harmonic map again, which is now taken as the minimizer of a quadratic functional. Take a ball $B_R=B(x_0,R)$ with $R<R_0$ and $B(x_0,2R) \ssubset \Omega^{\prime}$ and consider 
		\begin{equation}\label{eq:1ellipsystem2}
		\left\{\begin{aligned}
		&-\mbox{div}(\A\nabla h)=0,& &\mbox{in }B_R\\
		&h\vert_{\partial B_R}=u\vert_{\partial B_R},& &\mbox{on }\partial B_R,
		\end{aligned} \right. 
		\end{equation} where $\A=\tilde{F}^{\pprime}(0)$ with $\tilde{F}=F_{\nabla a}$ and $a(x)=u_{B_R}+(\nabla u)_{B_{2R}}(x-x_0)$. It is obvious that $h$ is the minimizer of \[ \mathscr{G}(v,B_R) := \int_{B_R}(F(\nabla a)+F^{\prime}(\nabla a) \nabla(v-a)+\frac{1}{2}F^{\pprime}(\nabla a)[\nabla(v-a),\nabla(v-a)])\id x. \]
	\begin{lem}\label{lem:1harapprox2}
	Let $F,\omega,u, S_u,\Omega^{\prime}, M(\Omega^{\prime})$ be as in Proposition \textnormal{\ref{prop:1caccioppoli2}}, $\tilde{F},a, B_R$ and $h$ be as above, and $q$ be the exponent obtained in Proposition \textnormal{\ref{prop:1higherint}}. Then for some $C=C(n,N,L,\ell,M,q)>0$ we have
		\begin{equation}\label{eq:1harapprox2}
		\fint_{B_R}\lvert \nabla (u-h)\rvert^2\id x \leq C\left(\fint_{B_{2R}}\lvert \nabla(u-a)\rvert^2 \id x \right)^{1+\frac{1}{q^{\prime}}}+ C\omega(2R).
		\end{equation}
	\end{lem}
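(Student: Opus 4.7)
The plan is to mimic the harmonic-approximation strategy of Proposition \ref{prop:1harapprox}, now in the stronger $L^2$ framework afforded by the higher integrability of Proposition \ref{prop:1higherint}. Because $|\nabla a| \leq m$, the constant bilinear form $\A = F^{\pprime}(\nabla a)$ is bounded with $|\A| \leq C(m)$ and, by \ref{it:intqc} together with \eqref{eq:LH}, satisfies the Legendre--Hadamard condition with constants depending only on $m, L, \ell$. For constant coefficients Plancherel upgrades this to the G\aa rding-type coercivity
\[
c \int_{B_R} |\nabla \varphi|^2 \id x \leq \int_{B_R} \A[\nabla \varphi, \nabla \varphi] \id x, \quad \varphi \in W^{1,2}_0(B_R, \R^N).
\]
On $\Omega^{\prime}$ we have $u \in W^{1,\infty}(B_R, \R^N)$ by hypothesis and $h \in W^{1,2}(B_R, \R^N)$ by Lemma \ref{lem:ellip2}, so $u - h \in W^{1,2}_0(B_R, \R^N)$. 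Testing with $\varphi = u - h$, invoking the $\A$-harmonicity of $h$, and using $\int_{B_R} \A[\nabla a, \nabla(u-h)] \id x = 0$ (since $\nabla a$ is constant and $u - h$ has zero trace), we reduce to
\[
c \int_{B_R} |\nabla(u-h)|^2 \id x \leq \int_{B_R} \A[\nabla(u-a), \nabla(u-h)] \id x.
\]

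Next I insert $\nabla u$ via a Taylor expansion of $F^{\prime}$ at $\nabla a$. Since $F \in C^{2,1}_{loc}$ and $|\nabla u|, |\nabla a| \leq M + m$ on $\Omega^{\prime}$,
\[
\bigl| F^{\pprime}(\nabla a) \nabla(u-a) - F^{\prime}(\nabla u) + F^{\prime}(\nabla a) \bigr| \leq C(m, M, L) \, |\nabla(u-a)|^2 \quad \text{a.e.\ on } B_R.
\]
Using $\int F^{\prime}(\nabla a) \nabla(u-h) \id x = 0$, the above integral splits into a Taylor remainder controlled by $C \int |\nabla(u-a)|^2 |\nabla(u-h)| \id x$ and the Euler--Lagrange piece $\int F^{\prime}(\nabla u) \nabla(u-h) \id x$. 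For the latter I apply Ekeland's principle as in Subsection \ref{subsec:ELineq}: for every $\delta > 0$ it furnishes $w_{\delta} \in W^{1,1}_u(B_R, \R^N)$ with $\fint |\nabla u - \nabla w_{\delta}| \id x \leq \sqrt{\varepsilon + \delta}$ (where $\varepsilon \leq C(M)\omega(R)$) and
\[
\Bigl| \fint_{B_R} F^{\prime}(\nabla w_{\delta}) \nabla \varphi \id x \Bigr| \leq \sqrt{\varepsilon + \delta}\, \fint_{B_R} |\nabla \varphi| \id x.
\]
Taking $\varphi = u - h$, combining the global bound $|F^{\prime}| \leq CL$ from Lemma \ref{lem:intLip} on $\{|\nabla w_{\delta}| > K\}$ with the local Lipschitz estimate of $F^{\prime}$ on $\{|\nabla w_{\delta}| \leq K\}$, and letting first $\delta \to 0$ and then $K \to \infty$, one obtains
\[
\Bigl| \int_{B_R} F^{\prime}(\nabla u) \nabla(u-h) \id x \Bigr| \leq C \sqrt{\omega(R)} \int_{B_R} |\nabla(u-h)| \id x.
\]

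Young's inequality with a small parameter absorbs $\int |\nabla(u-h)|^2$ and leaves
\[
\fint_{B_R} |\nabla(u-h)|^2 \id x \leq C \fint_{B_R} |\nabla(u-a)|^4 \id x + C \omega(R).
\]
The quartic integral is controlled by H\"{o}lder with conjugate exponents $q$ and $q^{\prime}$,
\[
\fint_{B_R} |\nabla(u-a)|^4 \id x \leq \Bigl( \fint_{B_R} |\nabla(u-a)|^{2q} \id x \Bigr)^{1/q} \Bigl( \fint_{B_R} |\nabla(u-a)|^{2q^{\prime}} \id x \Bigr)^{1/q^{\prime}}.
\]
Proposition \ref{prop:1higherint} bounds the first factor by $C \bigl( \fint_{B_{2R}} |\nabla(u-a)|^2 \id x + \omega(2R) \bigr)$, while the pointwise bound $|\nabla(u-a)| \leq M + m$ bounds the second by $C(M,m) \bigl(\fint_{B_R} |\nabla(u-a)|^2 \id x \bigr)^{1/q^{\prime}}$. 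Since $\omega \leq 1$ and $\fint |\nabla(u-a)|^2$ is uniformly bounded on $\Omega^{\prime}$, the cross term $\omega(2R) \cdot (\fint |\nabla(u-a)|^2)^{1/q^{\prime}}$ is absorbed into $C \omega(2R)$, and the desired estimate follows using $\fint_{B_R} \leq 2^n \fint_{B_{2R}}$.

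The main technical obstacle sits in the Ekeland step, specifically the transition from $F^{\prime}(\nabla w_{\delta})$ to $F^{\prime}(\nabla u)$: the Ekeland comparison lives in the $W^{1,1}$-metric, so $\nabla w_{\delta}$ is not a priori bounded although $\nabla u$ is, and one must rely on the \emph{global} boundedness of $F^{\prime}$ from Lemma \ref{lem:intLip} on the bad set $\{|\nabla w_{\delta}| > K\}$ together with a uniform-in-$\delta$ bound on $\int |\nabla w_{\delta}|$ (furnished by the $L^1$-mean coercivity of Lemma \ref{lem:1coercive}) to make the Chebyshev-type truncation and limit argument go through. Once this transition is made, the remaining calculations are routine manipulations of Young's and H\"{o}lder's inequalities.
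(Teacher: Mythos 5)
Your opening moves are sound --- testing the Legendre--Hadamard coercivity with $\varphi = u-h \in W^{1,2}_0(B_R,\R^N)$ and inserting the Taylor expansion of $F'$ at $\nabla a$ is a perfectly reasonable way to arrive at
\[
c\int_{B_R}|\nabla(u-h)|^2\id x \;\leq\; \int_{B_R}F'(\nabla u)\cdot\nabla(u-h)\id x + C\int_{B_R}|\nabla(u-a)|^2\,|\nabla(u-h)|\id x.
\]
The remainder term is then handled correctly by Young, H\"{o}lder with exponents $q,q'$, and Proposition \ref{prop:1higherint}. The gap is in the claimed Euler--Lagrange bound $\bigl|\int F'(\nabla u)\nabla(u-h)\id x\bigr| \leq C\sqrt{\omega(R)}\int|\nabla(u-h)|\id x$. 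The Ekeland comparison map $w_\delta$ only satisfies $\fint_{B_R}|\nabla(u-w_\delta)|\id x \leq \sqrt{\varepsilon+\delta}$ with $\varepsilon \sim \omega(R)$ \emph{fixed}; since $u\in W^{1,\infty}$ we can take $u_\delta=u$, and then letting $\delta\to 0$ leaves $\|\nabla(u-w_\delta)\|_{L^1}\sim\sqrt{\omega(R)}$, which does \emph{not} vanish, so $F'(\nabla w_\delta)$ does not converge to $F'(\nabla u)$ in any sense that makes the transition free. The truncation at level $K$ does not close the argument either: on $\{|\nabla w_\delta|\leq K\}$ the local Lipschitz constant of $F'$ is $C(K,M)$, and the best available pairing of $\nabla(u-w_\delta)\in L^1$ against $\nabla(u-h)\in L^2$ (not $L^\infty$) yields, after Chebyshev on the bad set and optimizing the split parameter, an error of order $\omega(R)^{1/3}\,|B_R|$ rather than $\omega(R)\,|B_R|$. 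That is strictly weaker than the stated conclusion $C\omega(2R)$, and taking $K\to\infty$ only makes the good-set contribution blow up with $C(K,M)$.

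The paper sidesteps the Euler--Lagrange inequality entirely. It observes that $h$ minimizes the \emph{quadratic} functional $\mathscr{G}(v)=\int_{B_R}\bigl(F(\nabla a)+F'(\nabla a)\nabla(v-a)+\tfrac12 F''(\nabla a)[\nabla(v-a),\nabla(v-a)]\bigr)\id x$ over its Dirichlet class, so by the exact quadratic structure (the cross terms vanish by the equation for $h$ and because $u-h\in W^{1,2}_0$ has zero trace),
\[
\tfrac12\int_{B_R}F''(\nabla a)[\nabla(u-h),\nabla(u-h)]\id x = \mathscr{G}(u)-\mathscr{G}(h).
\]
It then decomposes $\mathscr{G}(u)-\mathscr{G}(h)=(\mathscr{G}(u)-\mathscr{F}(u))+(\mathscr{F}(u)-\mathscr{F}(h))+(\mathscr{F}(h)-\mathscr{G}(h))=:I+II+III$. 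The middle term $II$ is bounded \emph{directly} by $\omega$-minimality with the competitor $v=h\in BV_u(B_R,\R^N)$: $II\leq\omega(R)\int_{B_R}(1+|\nabla h|)\id x$, which only needs the $L^2$-estimate for $h$, no Ekeland and no derivative of the $\omega$-minimality condition. The outer terms $I$ and $III$ are the Taylor remainders $\int\int_0^1(1-t)(F''(\nabla a+t\nabla(v-a))-F''(\nabla a))[\nabla(v-a),\nabla(v-a)]\id t\id x$ for $v=u$ and $v=h$, bounded by $C\int|\nabla(v-a)|^3\id x$ via the $C^{2,1}$-bound on $F$ and the uniform $C^1$-bound on $u$; these are then split by H\"{o}lder with exponents $q,q'$ exactly as you do, so this part of your proposal survives. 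If you want to keep your route, replace the Ekeland step by this $\mathscr{G}$-decomposition, which gives the middle term linearly in $\omega$.
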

	\begin{proof}
	As $u\in C^1(\bar{B}_R,\R^N)$, we have $\lvert \nabla (u-h)\rvert\in L^2(B_R)$ and by (\ref{eq:LH}) 
		\begin{align*}
		\int_{B_R} \lvert \nabla (u-h)\rvert^2\id x &\leq \frac{1}{2}C\int_{B_R}\tilde{F}^{\pprime}(0)[\nabla(u-h),\nabla (u-h)]\id x\\
		& = C (\mathscr{G}(u)-\mathscr{G}(h))\\
		&=C(\mathscr{G}(u)-\mathscr{F}(u)+\mathscr{F}(u)-\mathscr{F}(h)+\mathscr{F}(h)-\mathscr{G}(h))\\
		&=: C(I+II+III).
		\end{align*} The $\omega$-minimality of $u$, H\"{o}lder's inequality and the $L^2$-estimate of (\ref{eq:1ellipsystem2}) (see \cite{Giusti03}, \textsection 10.4) give \[ II \leq \omega(R)\int_{B_R}(1+\lvert \nabla h\rvert)\id x \leq \omega(R)(\omega_nR^n+CR^{\frac{n}{2}}\| \nabla u\|_{L^2(B_R)}) \leq \omega(R)\omega_nR^n(1+CM). \] By the $C^1$ boundedness of $u$, we have $\lvert \nabla (u-a)\rvert\leq 2M$ and then $I$ can be estimated as follows
		\begin{align*}
		I &=-\int_{B_R} \int_0^1(1-t)(F^{\pprime}(\nabla a+t\nabla (u-a))-F^{\pprime}(\nabla a))[\nabla(u-a),\nabla (u-a)]\id x\\
		&\leq C(M)\int_{B_R}\lvert \nabla(u-a)\rvert^3 \id x\\
		&\leq C(M)\omega_nR^n\left(\fint_{B_R} \lvert \nabla (u-a)\rvert^{q^{\prime}}\id x \right)^{\frac{1}{q^{\prime}}} \left(\fint_{B_R} \lvert \nabla (u-a)\rvert^{2q}\id x \right)^{\frac{1}{q}}\\
		&\overset{(\ref{eq:1higherint})}{\leq} C\omega_nR^n \left(\fint_{B_R} \lvert \nabla (u-a)\rvert^2 \id x \right)^{\frac{1}{q^{\prime}}}\left(\fint_{B_{2R}}\lvert \nabla (u-a)\rvert^2\id x +\omega(2R) \right).
		\end{align*} The $q$ can be taken smaller than $2$ and thus $\lvert \nabla (u-a)\rvert^{q^{\prime}} \leq (2M)^{q^{\prime}-2}\lvert \nabla (u-a)\rvert^2$. The estimate of $III$ is similar with the help of the $L^p$-estimate of (\ref{eq:1ellipsystem2}) (see \cite{Giusti03}, Section 10.4). Summing up the estimates for $I, II$ and $III$ gives the desired inequality.
	\end{proof}
	\begin{prop}\label{prop:excessest2}
	Suppose that $F,\omega,u, S_u, \Omega^{\prime}, M(\Omega^{\prime})$ and $q$ are as in Proposition \textnormal{\ref{prop:1higherint}}. Take a ball $B(x_0,R)$ such that $R<R_0$ and $B(x_0,2R) \ssubset \Omega^{\prime}$. For any $\sigma,\gamma \in (0,1)$ we have 
		\begin{equation}\label{eq:excessest2}
		\E_1(\sigma R) \leq C(\sigma^{-n}+\sigma^{2\gamma})\left(\E_1(2R)^{1+\frac{1}{q^{\prime}}}+\omega(2R) \right)+C\sigma^{2\gamma}\E_1(2R)
		\end{equation} for any $\gamma \in (0,1)$ with $C=C(n,N,L,\ell,M,q,\gamma)>0$, where $\E_1$ is the $L^2$-excess \[ \E_1(x_0,\rho) := \fint_{B_{\rho}}\lvert \nabla u-(\nabla u)_{B_{\rho}}\rvert^2 \id x. \]
	\end{prop}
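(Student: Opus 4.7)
The plan is to follow the standard harmonic approximation scheme: for $\sigma$ bounded away from zero the inequality is immediate from $\mathcal{E}_1(\sigma R)\leq \sigma^{-n}\fint_{B_R}|\nabla u-(\nabla u)_R|^2\leq 2^n\sigma^{-n}\mathcal{E}_1(2R)$ (absorbed into the $C\sigma^{-n}\mathcal{E}_1(2R)^{1+1/q'}$ or the $C\sigma^{2\gamma}\mathcal{E}_1(2R)$ term, depending on the regime), so we focus on the small-$\sigma$ case, say $\sigma\in(0,\tfrac{1}{4}]$. Let $h$ be the $\mathcal{A}$-harmonic comparison map on $B_R$ produced in Lemma~\ref{lem:1harapprox2}, with $a(x)=u_{B_R}+(\nabla u)_{B_{2R}}(x-x_0)$, so that $\nabla a=(\nabla u)_{B_{2R}}$.

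Using that the mean is the $L^2$-best constant, we split
\begin{equation*}
\mathcal{E}_1(\sigma R)\leq 2\fint_{B_{\sigma R}}|\nabla(u-h)|^2\id x+2\fint_{B_{\sigma R}}|\nabla h-(\nabla h)_{B_{\sigma R}}|^2\id x.
\end{equation*}
The first term is handled by enlarging the ball and applying the harmonic-approximation estimate \eqref{eq:1harapprox2}: since $\nabla a=(\nabla u)_{B_{2R}}$ one has $\fint_{B_{2R}}|\nabla(u-a)|^2\id x=\mathcal{E}_1(2R)$, so
\begin{equation*}
\fint_{B_{\sigma R}}|\nabla(u-h)|^2\id x\leq \sigma^{-n}\fint_{B_R}|\nabla(u-h)|^2\id x \leq C\sigma^{-n}\bigl(\mathcal{E}_1(2R)^{1+1/q'}+\omega(2R)\bigr).
\end{equation*}

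For the second term I will exploit the $C^2$-regularity of the $\mathcal{A}$-harmonic map. Lemma~\ref{lem:ellip1} applied with $z=\nabla a$ gives $\sup_{B_{R/2}}|\nabla^2 h|\leq c_a R^{-1}\fint_{B_R}|\nabla h-\nabla a|\id x$. Hence, for $\sigma\leq \tfrac14$ and $x\in B_{\sigma R}$,
\begin{equation*}
|\nabla h(x)-\nabla h(x_0)|\leq \sigma R\sup_{B_{R/2}}|\nabla^2 h| \leq C\sigma\fint_{B_R}|\nabla h-\nabla a|\id x,
\end{equation*}
so that $\fint_{B_{\sigma R}}|\nabla h-(\nabla h)_{B_{\sigma R}}|^2\id x\leq C\sigma^2\fint_{B_R}|\nabla h-\nabla a|^2\id x$. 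A triangle-inequality split together with $\fint_{B_R}|\nabla u-\nabla a|^2\id x\leq 2^n\mathcal{E}_1(2R)$ and another application of \eqref{eq:1harapprox2} yields
\begin{equation*}
\fint_{B_R}|\nabla h-\nabla a|^2\id x\leq C\bigl(\mathcal{E}_1(2R)^{1+1/q'}+\omega(2R)+\mathcal{E}_1(2R)\bigr).
\end{equation*}
Combining everything and using $\sigma^2\leq \sigma^{2\gamma}$ for $\gamma\in(0,1)$ produces exactly \eqref{eq:excessest2}.

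There is no real obstacle beyond bookkeeping: the two ingredients (Lemma~\ref{lem:1harapprox2} and the interior $C^2$-estimate for $\mathcal{A}$-harmonic maps) are already in place, and the argument is purely algebraic on top of them. The only care required is to keep the correct power $\mathcal{E}_1(2R)^{1+1/q'}$ on the harmonic-approximation side and to track the separation between the "linear" piece $C\sigma^{2\gamma}\mathcal{E}_1(2R)$ (coming from the smooth decay of $h$) and the "superlinear/error" piece $C(\sigma^{-n}+\sigma^{2\gamma})(\mathcal{E}_1(2R)^{1+1/q'}+\omega(2R))$, since precisely this split is what will allow the subsequent iteration (after absorbing $\sigma^{2\gamma}\mathcal{E}_1(2R)$ by choosing $\sigma$ small enough) to yield the improved $L^2$-excess decay and hence Hölder continuity of $\nabla u$ with any exponent less than $1$.
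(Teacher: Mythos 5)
Your argument is correct and follows essentially the same route as the paper: compare $u$ with the $\mathcal{A}$-harmonic map $h$ from Lemma~\ref{lem:1harapprox2}, control $\fint|\nabla(u-h)|^2$ via \eqref{eq:1harapprox2}, and exploit regularity of $h$ to obtain decay of its own excess. The only cosmetic difference is that the paper quotes the Campanato-type decay $\fint_{B_\rho}|\nabla h-(\nabla h)_{B_\rho}|^2\le C(\rho/R)^{2\gamma}\fint_{B_R}|\nabla h-(\nabla h)_{B_R}|^2$ directly from \cite{Gia83}, whereas you rederive the (stronger) $\sigma^2$-decay from the interior $C^2$-bound of Lemma~\ref{lem:ellip1} and then weaken it to $\sigma^{2\gamma}$; both are the same mechanism.
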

	\begin{proof}
	Suppose that $h$ is as in Lemma \textup{\ref{lem:1harapprox2}}. For any $\rho<R$, the harmonic function $h$ satisfies, by \textsection III.2 in \cite{Gia83},
		\begin{equation}
		\fint_{B_{\rho}} \lvert \nabla h-(\nabla h)_{B_{\rho}}\rvert ^2 \id x \leq C\left(\frac{\rho}{R}\right)^{2\gamma} \fint_{B_R} \lvert \nabla h-(\nabla h)_{B_R}\rvert^2 \id x.
		\end{equation} Then the excess of $\nabla u$ can be estimated by comparing $\nabla u$ and $\nabla h$. With the help of (\ref{eq:1harapprox2}), we can obtain (\ref{eq:excessest2}).
	\end{proof}
	\par Replace $2R$ by $R$ and then the excess estimate is
		\begin{equation}
		\E_1(\sigma R) \leq C(\sigma^{-n}+\sigma^{2\gamma})\left(\E_1(R)^{1+\frac{1}{q^{\prime}}}+\omega(R) \right)+C\sigma^{2\gamma}\E_1(R).
		\end{equation} It indeed holds for $\sigma \in(0,1)$ as the case $\sigma\in (\frac{1}{2},1)$ is obvious. Given $\alpha \in (0,1)$, we take $\gamma >\alpha$ and do iteration as in Subsection \ref{subsec:1iteration}. The final statement is as follows: There exist $\varepsilon_0>0$ and $R_2\in(0,R_0)$ such that if $B(x_0,R)\ssubset \Omega^{\prime}$ satisfies \[ \E_1(R)<\frac{\varepsilon_0}{2},\quad R<R_2, \] we have 
		\begin{equation}\label{eq:schauder2}
		\E_1(\rho) \leq C\left(\frac{\rho}{R}\right)^{2\alpha}\E_1(R) +C\omega(\rho)
		\end{equation} for any $\rho\in(0,R)$ with some $C=C(n,N,L,\ell,M, \alpha)>0$. It is routine to get that $\nabla u$ has the local modulus of continuity $\rho \mapsto \rho^{\alpha}+\Xi_{\frac{1}{2}}(\rho)$. From the discussion at the end of last subsection, it is not hard to see that $S_u \subset \Sigma_1 \cup \Sigma_2$, where
		\begin{align*}
		\Sigma_1 &:= \left\{x\in \Omega: \liminf_{\rho \to 0^+}\fint_{B_{\rho}(x)}E(Du-(Du)_{B_{\rho}(x)}) >0\right\},\\
		\Sigma_2 &:= \left\{x\in \Omega: \limsup_{\rho\to 0^+}\lvert (Du)_{B_{\rho}(x)}\rvert=\infty\right\}.
		\end{align*}
		
\subsection{Indirect argument}\label{subsec:1ind}	
	\par In \cite{DGK05} the authors showed the partial regularity for $\omega$-minimizers in the subquadratic case ($1<p<2$), where the harmonic approximation is done via an indirect argument. That method can be adapted to the linear growth case in this paper, and a sketch of the proof is in the following. We remark that with this method, only $C^2$ regularity of $F$ is needed, in other words, we replace \ref{it:intreg} by
		\begin{enumerate}[label=(\textbf{H3})$^{\prime}$, leftmargin=3\parindent, itemsep=.2em]
		\item\label{it:int2decont} $F$ is $C^2$ and \[\abs{F^{\pprime}(z_1)-F^{\pprime}(z_2)} \leq \nu_M(\abs{z_1-z_2})\] for any $z_1,z_2 \in B(0,M+1)$, where $\nu_M$ is concave and non-decreasing on $[0,\infty)$ with $\nu_M(0)=\lim_{t\to 0}\nu_M(t)=0$.
		\end{enumerate}	The Dini type condition of $\omega$ can also be relaxed to $\Xi_{\frac{1}{2}}(\rho)<\infty$ for any $\rho>0$, as the desired exponent of $\omega(R)$ is obtained with one attempt in the excess decay estimate.
	\par For this argument, most of the steps in \cite{DGK05} remain the same. The difference is twofold: the Sobolev-Poincar\'{e} inequality and the harmonic approximation.
	\par Define two maps $V, W$ on finite dimensional Hilbert spaces (not specified here): \[ V(\xi) := \frac{\xi}{(1+\lvert \xi\rvert^2)^{\frac{1}{4}}}, \quad W(\xi):=\frac{\xi}{\sqrt{1+\lvert \xi\rvert}}. \] Then we can see that $\lvert W(\xi)\rvert \leq \lvert V(\xi)\rvert\leq 2^{\frac{1}{4}}\lvert W(\xi)\rvert$ and $\lvert W(\cdot)\rvert^2$ is convex.
	\begin{thm}\label{thm:SPW}
	Let $B_{R} = B(x_0,R)\subset \R^n$ be a ball with $n\geq 2$. Then for any $u\in BV(B_{R},\R^N)$ there holds
		\begin{equation}
		\left(\fint_{B_{R}}\left\lvert W\left(\frac{u-u_{R}}{R}\right) \right\rvert^{\frac{2n}{n-1}}\id x \right)^{\frac{n-1}{2n}} \leq c_s \left(\fint_{B_{R}}\lvert W(Du)\rvert^2 \right)^{\frac{1}{2}},
		\end{equation} where the constant $c_s$ depends on $n,N$. It also holds with $W$ replaced by $V$.	 
	\end{thm}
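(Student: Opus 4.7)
My plan is to reduce to smooth maps by $BV$-approximation and then apply the classical scalar Sobolev-Poincar\'e inequality to the function $g := |W(u/R)|^2$. The $V$-version follows from the pointwise equivalence $|W(\xi)| \leq |V(\xi)| \leq 2^{1/4}|W(\xi)|$ noted just before the theorem, so I focus on the $W$ case.

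Since $\xi \mapsto |W(\xi)|^2 = |\xi|^2/(1+|\xi|)$ is convex (hence rank-one convex) and of linear growth, Lemma~\ref{lem:intapprox1} applied with this integrand yields a sequence $u_j \in W^{1,1}_u(B_R,\R^N) \cap C^\infty(B_R,\R^N)$ converging area-strictly to $u$ with $\fint_{B_R}|W(\nabla u_j)|^2 \id x \to \fint_{B_R}|W(Du)|^2$. Passing to an a.e.-convergent subsequence and using Fatou's lemma on the LHS reduces matters to $u \in W^{1,1}(B_R,\R^N) \cap C^\infty(B_R,\R^N)$; replacing $u$ by $u - u_R$ I may assume $u_R = 0$.

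I then take $g \in W^{1,1}(B_R)$ and apply the classical scalar Sobolev-Poincar\'e inequality together with the triangle inequality to get
\begin{equation*}
\Bigl(\fint_{B_R}|W(u/R)|^{2n/(n-1)}\id x\Bigr)^{(n-1)/n} \leq cR\fint_{B_R}|\nabla g|\id x + g_R.
\end{equation*}
Writing $\varphi(t) = t^2/(1+t)$, a direct computation gives $|\nabla g| \leq \varphi'(|u|/R)|\nabla u|/R$ with $\varphi'(t) \leq c\min(t,1)$ and $\min(t,1)^2 \leq 2|W(t)|^2$; Cauchy-Schwarz then yields
\begin{equation*}
R\fint_{B_R}|\nabla g|\id x \leq c\Bigl(\fint_{B_R}|W(u/R)|^2(1+|\nabla u|)\id x\Bigr)^{1/2}\Bigl(\fint_{B_R}|W(\nabla u)|^2 \id x\Bigr)^{1/2}.
\end{equation*}
It then remains to control both this product and the Poincar\'e remainder $g_R = \fint_{B_R}|W(u/R)|^2$ by $c\fint_{B_R}|W(\nabla u)|^2$. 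The latter is a $W$-Poincar\'e inequality for zero-mean $u$, which I would prove by a parallel scheme: scalar SP applied to $g$ combined with the classical $BV$ Poincar\'e $\fint|u|/R \leq c\fint|\nabla u|$ and the pointwise bound $|W(u/R)|^2 \leq |u|/R$ in the regime $|u|/R \geq 1$.

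The main obstacle will be the closure step. Since $|W|^2$ has quadratic growth near $0$ but only linear growth at $\infty$, the factor $(1+|\nabla u|)$ appearing in the Cauchy-Schwarz estimate must be absorbed back into the $|W(\nabla u)|^2$ factor on the right. I expect this to require a bootstrap in the spirit of Young's inequality, handling the small-gradient and large-gradient regimes separately. The exponent $2n/(n-1)$ is precisely what matches the scalar $BV$ Sobolev exponent $n/(n-1)$ under the squaring $|W(u/R)| \mapsto |W(u/R)|^2$, so the method is well-adapted to this target.
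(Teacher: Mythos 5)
Your overall architecture matches the paper's (area-strict reduction to $W^{1,1}\cap C^\infty$, scalar $W^{1,1}\hookrightarrow L^{n/(n-1)}$ embedding applied to $U:=W^2(|(u-u_R)/R|)$, then estimate $R|\nabla U|$), and the gradient computation $\varphi'(t)\leq c\min(t,1)$ with $\min(t,1)^2\leq 2W^2(t)$ is correct. But there are two genuine gaps, both at the closure step you flag.

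First, the Cauchy--Schwarz step is the wrong move and cannot be rescued by any ``bootstrap''. It produces the factor $\fint W^2(|(u-u_R)/R|)(1+|\nabla u|)$, which couples the zero-order and first-order quantities; on the set where $|\nabla u|$ is large, $W^2(|(u-u_R)/R|)|\nabla u|$ is simply not dominated by $W^2(|\nabla u|)$ (the factor $W^2(|(u-u_R)/R|)$ is unbounded, since $W^2$ is linear at infinity). The paper avoids this entirely by working \emph{pointwise before integrating}: with the prefactor $\tfrac{|g|(2+|g|)}{(1+|g|)^2}\leq 2$ uniformly, one splits on $\{|\nabla u|\geq 1\}$, where $R|\nabla U|\leq 2|\nabla u|\leq cW^2(|\nabla u|)$ directly, versus $\{|\nabla u|<1\}$, where Young's inequality gives $R|\nabla U|\leq c\,W^2(|(u-u_R)/R|)+c\,W^2(|\nabla u|)$ because $|\nabla u|^2\leq 2W^2(|\nabla u|)$ in that regime. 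This keeps everything on the $W^2$ scale; Cauchy--Schwarz does not.

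Second, the $L^2$ Poincar\'e inequality $\fint W^2(|(u-u_R)/R|)\leq c\fint W^2(|Du|)$, which you need both to control $U_R$ and to absorb the $W^2(|(u-u_R)/R|)$ term from Young's inequality, is not a consequence of the classical $BV$ Poincar\'e plus $W^2(t)\leq t$. That route yields only $\fint W^2(|(u-u_R)/R|)\leq c\fint|Du|$, which is strictly weaker (for $u=\varepsilon x_1$ the right-hand sides differ by a factor $\varepsilon$, and plugging the weaker bound into your scheme closes with $\fint|Du|$ instead of $\fint W^2(|Du|)$). The paper obtains the genuine $W^2$-versus-$W^2$ estimate from a pointwise Riesz-potential bound for $W^2$ (Theorem 2 in \cite{DGK05}), i.e.
\[
W^2\!\left(\frac{|u(x)-u_R|}{2R}\right)\leq\frac{(2R)^{n-1}}{(n-1)\mathcal{L}^n(B_R)}\int_{B_R}\frac{W^2(|Du(y)|)}{|x-y|^{n-1}}\,\mathrm{d}y,
\]
which exploits the convexity of $W^2$ via Jensen against the Riesz kernel; integrating in $x$ then gives the inequality. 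Without this ingredient, or an equivalent Orlicz--Poincar\'e input, your scheme does not close.
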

	\begin{proof}
	Notice that $\lvert W\rvert^2$ is convex and thus $\int \lvert W\rvert^2$ is continuous with respect to convergence in the area-strict sense in $BV$. Thus, we only need to consider maps in $W^{1,1}\cap C^{\infty}(B_{R},\R^N)$, and the general case follows by approximation. For $x,y \in B_{R}$, it is easy to see $\lvert x-y\rvert<2R$. Then fix an $x\in B_{R}$, by Theorem 2 in \cite{DGK05} we have \[ W^2\left( \frac{\lvert u-u_{R}\rvert}{2R}\right) \leq \frac{(2R)^{n-1}}{(n-1)\Le^n(B_{R})} \int_{B_{R}} \frac{W^2(\lvert Du(y)\rvert)}{\lvert x-y\rvert^{n-1}} \id y. \]
Integrating with respect to $x$ in $B_{R}$, we get 
		\begin{align}
		&\ \fint_{B_{R}}\left\lvert W\left(\frac{u-u_{R}}{R}\right) \right\rvert^{2}\id x \leq \frac{c}{R} \fint_{B_{R}} \int_{B_{R}} \frac{W^2(\lvert Du(y)\rvert)}{\lvert x-y\rvert^{n-1}}\id y\id x \label{eq:WSPpoincare} \\
		&\leq \frac{c}{R} \fint_{B_{R}}W^2(\lvert Du(y)\rvert)\int_{B_{2R}(y)} \lvert x-y\rvert^{1-n}\id x \id y \leq c\fint_{B_{R}} W^2(\lvert Du(y)\rvert) \id y. \notag
		\end{align} To get a higher order integrability of $W(\lvert u-u_{R}\rvert/R)$, we need the classical Sobolev inequality. Consider $g = (u-u_{R})/R$ and $U = W^2(\lvert g\rvert)$. Notice that $W^2(\lvert \cdot\rvert)$ is Lipschitz, and then $U \in W^{1,1}(B_{R})$ with \[ DU(x) =  \frac{\lvert g(x)\rvert(2+\lvert g(x)\rvert)}{(1+\lvert g(x)\rvert)^2}Dg(x) \frac{g(x)}{\lvert g(x)\rvert} \quad \mbox{ in } \{x\in B_R: g(x)\neq 0 \}. \] The Sobolev embedding for $W^{1,1}$ gives 
		\begin{equation}\label{eq:WSPmid}
		 \left(\fint_{B_{R}}\lvert U\rvert^{\frac{n}{n-1}} \id x\right)^{\frac{n-1}{n}} \leq C\left( R \fint_{B_{R}} \lvert DU\rvert\id x  + \fint_{B_{R}}\lvert U\rvert\id x \right). 
		\end{equation} When $\lvert Du(x)\rvert\geq 1$, from the expression of $DU$ we have \[ R \lvert DU(x)\rvert\leq 2 \lvert Du(x)\rvert \leq cW^2(\lvert Du(x)\rvert). \] When $0<\lvert Du(x)\rvert<1$, apply Young's inequality and then
		\begin{align*}
		R \lvert DU(x)\rvert& \leq \frac{1}{2}\frac{\lvert g\rvert^2(2+\lvert g\rvert)^2}{(1+\lvert g\rvert)^4} + \frac{1}{2}\lvert Du\rvert^2 \\
		& \leq 2\min\{\lvert g\rvert,\lvert g\rvert^2\} + c W^2(\lvert Du\rvert)\\
		& \leq c(W^2(\lvert g\rvert)+W^2(\lvert Du\rvert)).
		\end{align*} Thus, the first term on the right-hand side of (\ref{eq:WSPmid}) is controlled by \[ R\fint_{B_{R}}\lvert DU\rvert\id x \leq C\fint_{B_{R}} \left(W^2\left(\frac{u-u_{R}}{R}\right) + W^2(\lvert Du\rvert)\right)\id x. \] Combining (\ref{eq:WSPpoincare}) we have the desired inequality.
	\end{proof}
	\par We have obtained a Caccioppoli-type inequality in Proposition \ref{prop:1caccioppoli}. It is easy to see that $V^2(t) \sim E(t)$, so we have the Caccioppoli-type inequality with respect to $V^2$. 
	\begin{lem} \label{lem:indcaccioppoli}
	Suppose that $F\colon \R^{N\times n}\to \R$ satisfies \textnormal{\ref{it:intp},\ref{it:intqc}} and \textnormal{\ref{it:int2decont}} with $p=1$, $\omega$ satisfies \textnormal{\ref{it:omega1}} with constant $R_0>0$, and $u \in BV_{loc}(\Omega,\R^N)$ is an $\omega$-minimizer of $\bar{\F}$. Fix $m>0$, then there exists $c_c=c_c(m,n,N,L,\ell)$ such that for any $B_{R}=B(x_0,R)\ssubset \Omega$ with $R <R_0$ and affine map $a: \R^n\to \R^N$ with $\lvert \nabla a\rvert\leq m$, there holds 
		\begin{equation}
		\fint_{B_{\frac{R}{2}}} \lvert V(D(u- a))\rvert^2 \leq c_c \left(\fint_{B_{R}}\left\lvert V\left(\frac{u-a}{R} \right)\right\rvert^2 \id x + \omega(R)\right).
		\end{equation}
	\end{lem}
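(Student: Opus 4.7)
The plan is to deduce the lemma directly from the $E$-based Caccioppoli inequality of Proposition \ref{prop:1caccioppoli} by exploiting the pointwise equivalence of $\abs{V(\cdot)}^2$ and $E(\cdot)$. A direct calculation gives the identity
\begin{equation*}
\frac{\abs{V(\xi)}^2}{E(\xi)} \,=\, \frac{\abs{\xi}^2\bigl(\sqrt{1+\abs{\xi}^2}+1\bigr)}{\sqrt{1+\abs{\xi}^2}\,\abs{\xi}^2} \,=\, 1 + \frac{1}{\sqrt{1+\abs{\xi}^2}} \,\in\, (1,2],
\end{equation*}
so $E(\xi) \leq \abs{V(\xi)}^2 \leq 2\,E(\xi)$ for every $\xi\in \R^{N\times n}$. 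Since $E$ and $\abs{V(\cdot)}^2$ are both continuous, of linear growth, and share the common recession function $\abs{\cdot}$ at infinity, this pointwise bound transfers to the relaxed functionals on bounded Radon measures in the sense of \textnormal{(\ref{eq:flrelaxed})}: the singular parts of $E(\mu)$ and $V^2(\mu)$ coincide, and the absolutely continuous parts satisfy the same two-sided bound, giving $E(\mu)(A) \leq V^2(\mu)(A) \leq 2E(\mu)(A)$ for every Borel $A \ssubset \Omega$.

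Before invoking Proposition \ref{prop:1caccioppoli}, I would verify that its conclusion remains valid with \ref{it:intreg} weakened to \ref{it:int2decont}. Inspecting the argument, the only place that used more than $C^2$ regularity of $F$ is the invocation of the bound \textnormal{(\ref{eq:shiftedest2})} for the shifted integrand $\tilde{F}=F_{\nabla a}$. That estimate is in turn established by splitting into the regimes $\abs{z}\leq 1$ and $\abs{z}>1$: in the former one employs the Taylor representation $\tilde F(z) = \int_0^1(1-t)F^{\pprime}(\nabla a+tz)[z,z]\id t$ together with the local boundedness of $F^{\pprime}$, and in the latter one invokes Lemma \ref{lem:intLip} and the linear growth of $F$. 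Both arguments remain available under \ref{it:intqc} together with \ref{it:int2decont}.

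With these two observations the conclusion is immediate: applying the $E$-Caccioppoli bound and combining with the comparability on both sides yields
\begin{align*}
\fint_{B_{R/2}} \abs{V(D(u-a))}^2 &\leq 2\fint_{B_{R/2}} E(D(u-a)) \\
&\leq 2c\,\Bigl(\fint_{B_R} E\Bigl(\tfrac{u-a}{R}\Bigr)\id x + \omega(R)\Bigr) \\
&\leq 4c\,\Bigl(\fint_{B_R}\bigl\lvert V\bigl(\tfrac{u-a}{R}\bigr)\bigr\rvert^2 \id x + \omega(R)\Bigr),
\end{align*}
which is the desired inequality with $c_c=4c$. I do not expect any substantive obstacle; the only mild subtlety is ensuring the two-sided comparison holds at the level of measures (which is needed on the left-hand side), and this is handled by the recession-function matching above.
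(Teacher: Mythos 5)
Your proposal is correct and takes essentially the same route as the paper, which dispatches the lemma in a single sentence by observing that $V^2(t)\sim E(t)$ and invoking Proposition \ref{prop:1caccioppoli}. You supply the details the paper leaves implicit — the explicit two-sided bound $E(\xi)\leq \abs{V(\xi)}^2\leq 2E(\xi)$, the matching of recession functions needed to pass the comparison through the relaxed measure functionals, and the check that Proposition \ref{prop:1caccioppoli} only uses \ref{it:intqc} together with $C^2$ regularity and the shifted bound \textnormal{(\ref{eq:shiftedest2})}, all of which survive the weakening of \ref{it:intreg} to \ref{it:int2decont} — but the argument is the one intended.
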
	
	\par The $\omega$-minimality of $u$ implies that it is almost an $\A$-harmonic map with a proper $\A$, to present which we define the excess for $u$ with $A \in \R^{N\times n}$: \[\E_2(x_0,R,A) := \left(\fint_{B(x_0,R)} \lvert V(Du)-V(A)\rvert^2 \right)^\frac{1}{2}.  \] When $x_0$ (and $R$) and $A$ are fixed, we abbreviate the quantity as $\E_2(R) (\E_2)$. The following can be showed with the proof of Lemma 4 in \cite{DGK05} by considering $\nabla u$ and $D^su$ separately.
	\begin{lem}[Approximate harmonicity]\label{lem:indalmosthar}
	Suppose that $F, \omega$ and $u$ are as in Lemma \textnormal{\ref{lem:indcaccioppoli}}. For any $m>0$, there exists $c_e>0$ depending on $mn,N,L$ such that for any ball $B_{R}=B(x_0,R)\ssubset \Omega$ with $R<R_0$ and any $A \in \R^{N\times n}$ with $\lvert A\rvert\leq m$, we have 
		\begin{equation}
		\left\lvert \fint_{B_{R}} F^{\pprime}(A)[Du-A,D\varphi] \right\rvert \leq c_e(\sqrt{\nu_M(\E_2)}\E_2+\E_2^2 +\sqrt{\omega(R)})\sup_{B_{R}}\lvert D\varphi\rvert
		\end{equation} for any $\varphi \in C_0^1(B_{R},\R^N)$.
	\end{lem}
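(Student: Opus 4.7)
The plan is to adapt the indirect proof of Lemma 4 in \cite{DGK05} to the linear-growth $BV$ setting, the key new ingredient being the splitting $Du = \nabla u\,\Le^n + D^s u$ and the separate treatment of the two parts. Given $\varphi \in C_0^1(B_R, \R^N)$, using $\int_{B_R} D\varphi\,\id x = 0$ I would write
\[
\fint_{B_R} F^{\pprime}(A)[Du-A,D\varphi] = \fint_{B_R} F^{\pprime}(A)[\nabla u - A, D\varphi]\,\id x + \frac{1}{\Le^n(B_R)}\int_{B_R} F^{\pprime}(A)[D\varphi,\id D^s u].
\]
The singular term is bounded directly: applying \ref{it:int2decont} with $z_2 = 0$ gives $|F^{\pprime}(A)|\leq C(m)$, so the singular contribution is at most $C(m)\sup_{B_R}|D\varphi|\cdot |D^s u|(B_R)/\Le^n(B_R)$. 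Since $|V(z)|^2\sim|z|$ at infinity, the recession function of $z\mapsto|V(z)-V(A)|^2$ is $|z|$, and the relaxed integral representation from Subsection \ref{subsec:measure} forces $|D^s u|(B_R)\leq C\,\Le^n(B_R)\,\E_2^2$. This produces the singular half of the $\E_2^2$ term in the target estimate.

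For the absolutely continuous part, since $u$ is only an $\omega$-minimizer I would invoke the Ekeland procedure of Subsection \ref{subsec:ELineq}: for each $\delta>0$, the area-strict approximation $u_{\delta}\in W^{1,1}_u(B_R,\R^N)$ from Remark \ref{rmk:intapprox} yields a perturbed map $w\in W^{1,1}_u(B_R,\R^N)$ with $\fint|\nabla(u_{\delta}-w)|\leq\sqrt{\varepsilon+\delta}$ and satisfying the Euler--Lagrange inequality (\ref{eq:1ELineq}), where $\varepsilon\leq C(m)\omega(R)$ (using $|A|\leq m$ together with the smallness of $\E_2$ to control $|(Du)_{B_R}|$). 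I would then linearize,
\[
F^{\pprime}(A)(\nabla w - A) = \bigl(F^{\prime}(\nabla w) - F^{\prime}(A)\bigr) + R_w,\qquad R_w := \int_0^1 \bigl[F^{\pprime}(A) - F^{\pprime}(A + t(\nabla w - A))\bigr](\nabla w - A)\,\id t,
\]
and observe that, via $\int D\varphi = 0$ once more, $\fint F^{\pprime}(A)[\nabla w - A, D\varphi]\,\id x = \fint F^{\prime}(\nabla w)\cdot D\varphi\,\id x + \fint R_w\cdot D\varphi\,\id x$. The first (main) piece is controlled by (\ref{eq:1ELineq}) via $C\sqrt{\omega(R)+\delta}\sup|D\varphi|$, which produces the $\sqrt{\omega(R)}$ contribution after $\delta\to 0$.

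The core technical work is the error bound for $\fint |R_w||D\varphi|\,\id x$. On $\{|\nabla w - A|\leq 1\}$, choosing $M$ so that $B(0,m+1)\subset B(0,M+1)$, assumption \ref{it:int2decont} together with the monotonicity and concavity of $\nu_M$ gives $|R_w|\leq \nu_M(|\nabla w - A|)|\nabla w - A|$; on the complementary set, $|F^{\pprime}(A)|\leq C(m)$ combined with the global Lipschitz bound from Lemma \ref{lem:intLip} (valid for $p=1$) gives $|R_w|\leq C(m,L)|\nabla w - A|\leq C(m,L)|\nabla w - A|^2$, absorbed into $\E_2^2$ by the $V$-estimate $\int_{\{|\nabla w - A|\geq 1\}}|\nabla w - A|^2\,\id x \leq C\int_{B_R}|V(\nabla w) - V(A)|^2\,\id x$. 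For the small-value contribution, Cauchy--Schwarz followed by Jensen's inequality (using concavity of $\nu_M$) would bound $\fint \nu_M(|\nabla w - A|)|\nabla w - A|\,\id x$ by $C\sqrt{\nu_M(\E_2)}\cdot \E_2$, after first controlling $\fint|\nabla w - A|$ by $C(\E_2 + \E_2^2)$ through the two-sided estimates for $V$ in \cite{CFM98}, Lemma 2.1. Passing $\delta\to 0$ using $\|\nabla(u_{\delta}-w)\|_{L^1}\to 0$ and $u_{\delta}\to u$ in the $BV$-area-strict sense, then recombining with the singular bound, yields the claim. The main obstacle is the precise arithmetic of assembling all error pieces into the clean form $\sqrt{\nu_M(\E_2)}\E_2 + \E_2^2 + \sqrt{\omega(R)}$: it relies on the two-sided $V$-estimates to translate between $|\nabla w - A|$-quantities (needed after Jensen) and the excess $\E_2$, and on the boundedness of $\nu_M$ on $[0,2(M+1)]$ to decouple the Cauchy--Schwarz factors.
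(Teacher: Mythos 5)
Your overall strategy is the paper's own: the printed ``proof'' of this lemma is the single sentence that one should follow Lemma 4 of \cite{DGK05} with $\nabla u$ and $D^su$ handled separately, and that is exactly what you do. In particular the observation that the recession function of $z\mapsto|V(z)-V(A)|^2$ is $|z|$, so that $|D^su|(B_R)/\Le^n(B_R)\leq\E_2^2$, disposes of the singular contribution cleanly; the Ekeland step, the linearization of $F^{\pprime}(A)(\nabla w-A)$, and the split into small and large $|\nabla w-A|$ for the absolutely continuous part are all the right moves.

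There is, however, one concretely false step. You claim
\[
\int_{\{|\nabla w - A|\geq 1\}}|\nabla w - A|^2\,\id x \leq C\int_{B_R}|V(\nabla w) - V(A)|^2\,\id x.
\]
For $p=1$ the two-sided estimate reads $|V(z_1)-V(z_2)|^2\sim|z_1-z_2|^2/(1+|z_1|+|z_2|)$, so on $\{|\nabla w - A|\geq 1\}$ with $|A|\leq m$ one only obtains $|V(\nabla w)-V(A)|^2\geq c(m)\,|\nabla w - A|$, \emph{not} a lower bound by $|\nabla w - A|^2$; the latter would require an $L^\infty$ bound on $\nabla w$, which the Ekeland step does not supply. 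The error is self-inflicted: you did not need the extra power. Keep $|R_w|\leq C(m,L)|\nabla w - A|$ on the large-gradient set, use the correct linear-growth $V$-estimate
\[
\fint_{\{|\nabla w - A|\geq 1\}}|\nabla w - A|\,\id x \leq C(m)\fint_{B_R}|V(\nabla w) - V(A)|^2\,\id x,
\]
and then pass $\delta\to 0$, controlling $\fint|V(\nabla w)-V(A)|^2$ by $C(\E_2^2+\sqrt{\omega(R)})$ via $|V(z_1)-V(z_2)|^2\leq C|z_1-z_2|$, the Ekeland distance bound $\fint|\nabla(u_\delta-w)|\leq\sqrt{\varepsilon+\delta}$, and the area-strict convergence of $u_\delta$ to $u$. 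With that single repair the argument is sound. You will also want to make explicit that if $\E_2\geq 1$ the conclusion is trivial (since $|F^{\pprime}(A)|\leq C(m)$ and $\fint|Du-A|\leq C(m)(\E_2+\E_2^2)\leq C(m)\E_2^2$), so that $\E_2<1$ may be assumed before the Jensen and Cauchy--Schwarz arithmetic relating $\nu_M\bigl(\fint|\nabla w - A|\bigr)$ to $\nu_M(\E_2)$.
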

	With this result, we are able to approximate $u$ by an $\A$-harmonic map by the following lemma:
	\begin{lem}\label{lem:indharapprox}
	For any $\varepsilon>0$, there exists $\delta = \delta(n,N,\Lambda,\lambda,\varepsilon) \in (0,1]$ such that for any $\A \in \bigodot^2(\R^{N\times n})$ that satisfies \textnormal{(\ref{eq:LHcondition})}, any ball $B_R=B(x_0,R)\subset \R^n$ and any $v\in BV(B_{R},\R^n)$ with 
		\begin{align}
		&\fint_{B_{R}}\lvert W(Dv)\rvert^2 \leq \gamma^2 \leq 1,\\
		&\fint_{B_{R}}\A[Dv,D\varphi] \leq \gamma \delta \sup_{B_{R}}\lvert D\varphi\rvert, \quad \mbox{for any }\varphi \in C^1_0(B_{R},\R^N),
		\end{align} there exists an $\A$-harmonic map $h$ satisfying
		\begin{equation}
		\fint_{B_{R}}\lvert W(Dh)\rvert^2 \leq 1, \quad \fint_{B_{R}}\left\lvert W\left(\frac{v-\gamma h}{R}\right)\right\rvert^2\id x \leq \gamma^2 \varepsilon.
		\end{equation}
	\end{lem}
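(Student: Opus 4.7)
The proof proceeds by the classical indirect (contradiction) argument for harmonic approximation, adapted to the $BV$ setting and to the $W$-excess.

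\emph{Negation and normalisation.} Suppose the claim fails: there exists $\varepsilon>0$ such that for every $j\in\N$ one finds $\A_j$ satisfying \textnormal{(\ref{eq:LHcondition})} with the fixed $\Lambda,\lambda$, a ball $B_{R_j}(x_j)\subset\R^n$, $\gamma_j\in(0,1]$, and $v_j\in BV(B_{R_j}(x_j),\R^N)$ verifying the hypotheses with $\delta=1/j$ but for which no $\A_j$-harmonic map meets the conclusion. Translating and dilating we take $x_j=0$, $R_j=1$, and after subtracting a constant we normalise $(v_j)_{B_1}=0$. Setting $u_j:=v_j/\gamma_j$ and using that $|W(\gamma\xi)|^2\geq\gamma^2|W(\xi)|^2$ whenever $\gamma\in(0,1]$, the hypotheses rescale to
\[
(u_j)_{B_1}=0,\qquad \fint_{B_1}|W(Du_j)|^2\leq 1,\qquad \Bigl|\fint_{B_1}\A_j[Du_j,D\varphi]\Bigr|\leq\frac{1}{j}\sup_{B_1}|D\varphi|.
\]

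\emph{Compactness and identification of the limit.} Since $|W(\xi)|^2\gtrsim\min\{|\xi|,|\xi|^2\}$, the $W$-bound forces $|Du_j|(B_1)\leq C$, and combined with $(u_j)_{B_1}=0$ this gives $\{u_j\}$ bounded in $BV(B_1,\R^N)$. Passing to subsequences, $u_j\to u$ in $L^1(B_1,\R^N)$ and a.e.\ for some $u\in BV(B_1,\R^N)$, $Du_j\xrightharpoonup{\ast}Du$ as Radon measures, and $\A_j\to\A$ still satisfying \textnormal{(\ref{eq:LHcondition})}. Applying Theorem \ref{thm:SPW} to $u_j-u$ further supplies a uniform $L^{2n/(n-1)}$-bound on $|W(u_j-u)|$ and hence a uniform $L^{n/(n-1)}$-bound on $u_j-u$. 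Passing to the limit in the approximate Euler--Lagrange identity yields $\int_{B_1}\A[Du,D\varphi]=0$ for every $\varphi\in C_c^1(B_1,\R^N)$, so $u$ is $\A$-harmonic on $B_1$; Lemma \ref{lem:ellip1} then implies $u\in C^\infty(B_1,\R^N)$ with locally bounded derivatives. Convexity and linear growth of $|W|^2$ give the lower semicontinuity $\fint|W(Du)|^2\leq\liminf\fint|W(Du_j)|^2\leq 1$, so $h:=u$ is an admissible candidate for the comparison map.

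\emph{Closing the contradiction and main obstacle.} The failure of the conclusion for $h=u$ forces, for infinitely many $j$,
\[
\fint_{B_1}|W(\gamma_j(u_j-u))|^2>\gamma_j^2\varepsilon,\quad\text{equivalently,}\quad \fint_{B_1}\frac{|u_j-u|^2}{1+\gamma_j|u_j-u|}>\varepsilon.
\]
Ruling this out is the main technical obstacle, because when $\gamma_j\to 0$ the normalising factor $\gamma_j^{-2}$ voids the naive estimate $|W|^2\leq|\cdot|$, and mere $L^1$-convergence of $u_j\to u$ no longer suffices. We handle this by splitting $B_1$ at the threshold $\gamma_j|u_j-u|=1$: on the small part the integrand is dominated by $|u_j-u|^2$, and the equi-integrability supplied by the uniform $L^{n/(n-1)}$-bound on $u_j-u$ together with a.e.\ convergence and Vitali's theorem yields the required decay; on the large part the integrand is dominated by $|u_j-u|/\gamma_j$, while Chebyshev gives $\Le^n(\{\gamma_j|u_j-u|>1\})\leq\gamma_j\|u_j-u\|_{L^1}$, and interpolating this volume bound with the higher integrability shows this contribution is also $o(1)$. (If needed, one replaces $u$ by a smooth truncation exploiting $u\in C^\infty(B_1)$ so that the above pointwise bounds extend to the bulk integrals.) Combining the two parts contradicts the displayed inequality for all sufficiently large $j$ and completes the proof.
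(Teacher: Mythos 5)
Your overall strategy — negate, scale to the unit ball, exploit $BV$-compactness and Theorem~\ref{thm:SPW} to extract a limit, identify the limit as $\A$-harmonic by passing to the limit in the approximate Euler--Lagrange identity, and then compare $u_j$ with the limit — is exactly the indirect argument the paper has in mind (following Lemma~6 of \cite{DGK05}), and the compactness, identification, and lower-semicontinuity steps are sound.

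However, the step you yourself flag as ``the main technical obstacle,'' closing the contradiction when $\gamma_j\to 0$, contains a genuine gap, and it is the crux of the lemma. The equi-integrability claim for the small-set contribution fails for $n\geq 3$: Theorem~\ref{thm:SPW} only yields an $L^{n/(n-1)}$-bound on $u_j-u$, and $n/(n-1)<2$ for $n\geq 3$, so this cannot give uniform integrability of $|u_j-u|^2$ on $\{\gamma_j|u_j-u|\leq 1\}$, where $|u_j-u|$ may be as large as $1/\gamma_j$; Vitali's theorem therefore does not apply. The interpolation on the large set $S_j:=\{\gamma_j|u_j-u|>1\}$ is indeterminate rather than $o(1)$: with H\"older and Chebyshev exactly as you propose one gets
\[
\frac1{\gamma_j}\int_{S_j}|u_j-u|\,\id x\ \leq\ \gamma_j^{\frac1n-1}\,\|u_j-u\|_{L^{n/(n-1)}}\,\|u_j-u\|_{L^1}^{1/n},
\]
and $\gamma_j^{1/n-1}\to\infty$ while $\|u_j-u\|_{L^1}^{1/n}\to 0$ at a completely unrelated rate (both are just sequences extracted from the negation), so the product cannot be declared $o(1)$. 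Closing the $\gamma_j\to0$ case requires genuinely upgrading the convergence $u_j\to u$ beyond the a~priori $L^{n/(n-1)}$-bound — in the DGK05-style argument this comes from combining the approximate harmonicity with a Caccioppoli estimate for the comparison maps so that the gradient excess $\fint|W(Du_j-Du)|^2$ itself vanishes along the subsequence, after which Theorem~\ref{thm:SPW} delivers the required decay of $\fint|W(u_j-u)|^2$. There are also two smaller but real omissions: the comparison map must be $\A_j$-harmonic (not $\A$-harmonic as you take it), which requires replacing $u$ by its $\A_j$-harmonic extension on an interior ball and using $\A_j\to\A$; and you pass over the paper's own caveats about treating $\nabla u_j$ and $D^su_j$ separately and justifying the $B_R\to B(0,1)$ rescaling of $BV$ maps by $W^{1,1}\cap C^\infty$ approximation.
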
	
	\par The proof of this lemma is by contradiction, see Lemma 6 in \cite{DGK05}. The integrals concerning $\nabla u$ and $D^su$ need to be considered separately when necessary. Notice that the scaling between $B_{R}$ and $B(0,1)$ for $BV$ maps does not hold straightforward but can proved by approximation with $W^{1,1}\cap C^{\infty}$ maps.
	\par The excess decay estimate can be done with the same procedure as that in \cite{DGK05},  Lemma 7. At some points we need to consider the singular part of the integral of a $BV$ map separately, which will not make an essential difference.
	\par Then if a ball $B_R=B(x_0,R)\ssubset \Omega$ is such that $\lvert (Du)_R\rvert\leq m$, and $\E_2(R)$ and $R$ are taken to be small enough, we will have \[ \E_2^2(\rho) \leq C\left(\left(\frac{\rho}{R}\right)^{2\alpha}\E_2(R)+\omega(R)\right), \quad \mbox{for any }\rho\in (0,R). \]  The desired partial regularity hence follows.
		
\section{Partial regularity for $u$}\label{sec:upartialreg}
	 \par This section is for the proof of Theorem \ref{thm:upartialreg}, which gives the partial H\"{o}lder regularity of $\omega$-minimizers in the subquadratic case without the Dini-type condition \ref{it:omega3}. The main steps are similar with those in last section, so we omit some details and only present an outline with the difference.
	
\subsection{Caccioppoli-type inequality}\label{subsec:caccio2}
	\par To show Theorem \ref{thm:upartialreg}, we need to consider a normalised excess (see (\ref{eq:uexcess})). Correspondingly, the Caccioppoli-type inequality in this case also contains a normalising factor $(1+\lvert A\rvert)$.
	\begin{prop}\label{prop:caccio2}
	Suppose that $F\colon \R^{N\times n}\to \R$ satisfies \textnormal{\ref{it:intp}, \ref{it:intqc}, \ref{it:int2deriv}} and \textnormal{\ref{it:int2deLip}} with $p\in(1,2)$, and $\omega$ satisfies \textnormal{\ref{it:omega1}}. The map $u\in BV_{loc}(\Omega,\R^N)$ is an $\omega$-minimizer of $\F$ with constant $R_0>0$. Then for any ball $B_R=B(x_0,R) \ssubset \Omega$ with $R<R_0$ and any affine map $a\colon \R^n\to \R^N$ with $\nabla a =A\in R^{N\times n}$, there exists a constant $c=c(n,N,L,\ell,p)$ independent of $a$ such that 
		\begin{equation}\label{eq:ucaccio}
		\int_{B_{\frac{R}{2}}}E_p\left(\frac{\nabla u-A}{1+\lvert A\rvert}\right)\id x \leq c\left(\int_{B_R}E_p\left(\frac{u-a}{R(1+\lvert A\rvert)} \right)\id x + \omega(R)R^n\right).
		\end{equation}
	\end{prop}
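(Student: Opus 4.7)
The plan is to adapt the proof of Proposition~\ref{prop:1caccioppoli} to the subquadratic setting, working throughout with the shifted integrand $\tilde F := F_A$ (so that $\tilde F(z) = F(A+z) - F(A) - F'(A)z$) and the shifted reference integrand $E_p^A$ in place of $E$. Setting $\tilde u := u - a$, I would first observe that $\tilde u$ is itself an $\omega$-minimizer of $v \mapsto \int \tilde F(\nabla v)$, in the sense that
$$\int_{B_s}\tilde F(\nabla \tilde u)\,dx \leq \int_{B_s}\tilde F(\nabla \tilde v)\,dx + \omega(R)\int_{B_s}(1 + |\nabla \tilde v + A|^p)\,dx$$
for any $\tilde v$ with $\tilde v - \tilde u \in W^{1,p}_0(B_s)$; this is immediate from the $\omega$-minimality of $u$ together with the vanishing of $\int F'(A)\nabla \varphi$ for $\varphi \in W^{1,p}_0$.

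Next I would fix $R/2 \leq t < s \leq R$, take a cut-off $\rho \in C_c^\infty(B_s)$ with $\rho \equiv 1$ on $B_t$ and $|\nabla \rho| \leq 2/(s-t)$, and set $\varphi := \rho\tilde u \in W^{1,p}_0(B_s)$, $\psi := (1-\rho)\tilde u$. Shifted quasiconvexity~(\ref{eq:pshiftedqc}) gives $c\ell \int_{B_s} E_p^A(\nabla \varphi) \leq \int_{B_s} \tilde F(\nabla \varphi)$. Splitting $\int \tilde F(\nabla \varphi) = \int \tilde F(\nabla \tilde u) + \int_{B_s \setminus B_t}[\tilde F(\nabla \varphi) - \tilde F(\nabla \tilde u)]$, applying the $\omega$-minimality with comparison map $\tilde v = \psi$, and invoking the pointwise bound~(\ref{eq:ushiftedest1}) $|\tilde F(z)| \leq C E_p^A(z)$ converts every $\tilde F$-term into an $E_p^A$-term. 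To handle the $\omega$-correction I would exploit the elementary identity $|z|^p \leq E_p^A(z) + (1+|A|)^p$ (which follows at once from $E_p^A(z) + (1+|A|)^p = ((1+|A|)^2 + |z|^2)^{p/2} \geq |z|^p$), so that
$$\omega(R)\int_{B_s}(1 + |\nabla \psi + A|^p) \leq C\omega(R)\int_{B_s} E_p^A(\nabla \psi) + C(1+|A|)^p \omega(R) R^n;$$
the first piece is then absorbed into the main $E_p^A$-terms because $\omega \leq 1$. Combining with the subadditivity estimates $E_p^A(\nabla \varphi),\, E_p^A(\nabla \psi) \leq C[E_p^A(\nabla \tilde u) + E_p^A(\tilde u/(s-t))]$ on $B_s \setminus B_t$ (obtained from the homogeneity~(\ref{eq:Ehomog}) and Lemma~\ref{lem:Eest} applied with scaling factor $1+|A|$), this yields a preliminary hole-filling bound
$$\int_{B_t} E_p^A(\nabla \tilde u) \leq \theta \int_{B_s} E_p^A(\nabla \tilde u) + C\int_{B_s} E_p^A\!\!\left(\tfrac{\tilde u}{s-t}\right) + C(1+|A|)^p \omega(R) R^n$$
for some $\theta \in (0,1)$.

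Finally, I would invoke Lemma~\ref{lem:iteraineq} with $\Phi(t) := \int_{B_t} E_p^A(\nabla \tilde u)$ and $\Psi(r) := C\int_{B_R} E_p^A(\tilde u/r)$; the scaling hypothesis $\Psi(\sigma r) \leq \sigma^{-2}\Psi(r)$ reduces, via (\ref{eq:Ehomog}), to the estimate $E_p(w/\sigma) \leq \sigma^{-2}E_p(w)$ for $\sigma \leq 1$, which is immediate from~(\ref{eq:Eest2}). This produces the shifted Caccioppoli inequality
$$\int_{B_{R/2}} E_p^A(\nabla \tilde u) \leq C\Bigl[\int_{B_R} E_p^A(\tilde u/R) + (1+|A|)^p \omega(R) R^n\Bigr],$$
and dividing through by $(1+|A|)^p$ and re-applying~(\ref{eq:Ehomog}) converts both sides to the $E_p(\cdot/(1+|A|))$-form required for~(\ref{eq:ucaccio}). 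The main technical point is the careful bookkeeping of the $(1+|A|)$-factors: the identity $|z|^p \leq E_p^A(z) + (1+|A|)^p$ is precisely what produces the correct $(1+|A|)^p$ weight on the $\omega$-term so that, after the final normalization by $(1+|A|)^p$, the remainder is exactly $\omega(R) R^n$ rather than a quantity growing in $|A|$.
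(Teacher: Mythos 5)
Your proposal is correct and follows essentially the same route as the paper's proof: shift to $\tilde F = F_A$, form $\varphi = \rho\tilde u$ and $\psi = (1-\rho)\tilde u$, use (\ref{eq:pshiftedqc}) and (\ref{eq:ushiftedest1}), estimate the $\omega$-term via the homogeneity of $E_p^A$, hole-fill with Lemma \ref{lem:iteraineq}, and normalize by $(1+|A|)^p$ at the end. You are actually a bit more careful than the paper's terse write-up, which momentarily writes the $\omega$-term as $\omega(s)\int(1+|\nabla\psi|^p)$ rather than the exact $\omega(s)\int(1+|\nabla\psi+A|^p)$; your explicit pointwise identity $|z|^p \le E_p^A(z)+(1+|A|)^p$ (the sharp version of (\ref{eq:Eest3}) after applying (\ref{eq:Ehomog})) is exactly what is needed to absorb that discrepancy into the harmless $(1+|A|)^p\omega(R)R^n$ term, and your explicit statement of the shifted $\omega$-minimality of $\tilde u$ is a useful clarification that the paper leaves implicit.
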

	\begin{proof}
	Set $\tilde{F}:= F_A$, $\tilde{u}=u-a$, and fix $\frac{R}{2}<t<s<R$. Take a smooth cut-off function between $B_t$ and $B_s$ with $\rho \in C_c^{\infty}(B_s)$ and $\lvert \nabla \rho\rvert\leq \frac{2}{s-t}$, and set $\varphi = \rho\tilde{u}, \psi = (1-\rho)\tilde{u}$. Then $\varphi \in W^{1,p}_0(B_s,\R^N)$, and the quasiconvex condition \ref{it:intqc} with (\ref{eq:pshiftedqc}) gives \[\int_{B_s}\tilde{F}(\nabla \varphi_{\varepsilon})\id x \geq c\,\ell \int_{B_s}E_p^A(\nabla \varphi)\id x.\] The rest part can be carried out as in Proposition \ref{prop:1caccioppoli} with $E$ replaced by $E_p^A$. We estimate the term with $\omega(s)$ as follows
		\begin{align*}
		 \omega(s)\int_{B_s} (1+\abs{\nabla \psi}^p)\id x &= \omega(s)(1+\abs{A})^p\int_{B_s}\frac{1+\abs{\nabla \psi}^p}{(1+\abs{A})^p}\id x\\
		&\overset{(\ref{eq:Eest3})}{\leq} \omega(s)(1+\abs{A})^p\int_{B_s}\brac{2+\frac{1}{a_1}E_p\brac{\frac{\nabla \psi}{1+\abs{A}}}}\id x\\
		&\leq 2\omega(R) \omega_nR^n(1+\abs{A})^p+ \frac{1}{a_1}\int_{B_s}E_p^A(\nabla \psi)\id x.
		\end{align*}	The inequality obtained from above with Lemma \ref{lem:iteraineq} is
		\begin{equation}
		\int_{B_{\frac{R}{2}}} E_p^A(\nabla u-A) \leq C \int_{B_R}E_p^A\brac{\frac{u-a}{R}}\id x + C\omega(R)R^n(1+\abs{A})^p,
		\end{equation} and then (\ref{eq:ucaccio}) follows by (\ref{eq:Ehomog}).
	\end{proof}
		
\subsection{Harmonic approximation}
	\par The result in this subsection can be obtained by modifying the process in Subsection \ref{subsec:ELineq} and \ref{subsec:harapprox}, so we will omit the repetitive part and only give the difference.	
	\par Suppose that $F,\omega$ and $u$ are as in Theorem \ref{thm:upartialreg}, where $p\in(1,2)$. Take $B_R=B(x_0,R)\ssubset \Omega$ with $R<R_0$, and fix $A \in\R^{N\times n}$. Similar with Subsection \ref{subsec:ELineq}, we have
		\begin{equation}
		\F(u,B_R) \leq \inf_{v \in W^{1,1}_u(B_R,\R^N)}\mathscr{F}(v,B_R)+\omega_nR^n \varepsilon,
		\end{equation} where $\varepsilon = \omega(R)\fint_{B_R}(a_6+a_7\lvert \nabla u\rvert^p)\id x$. Consider the complete metric space $X = W^{1,p}_u(B_R,\R^N)$ with \[d(w_1,w_2) = (1+\abs{A})^{\frac{p}{2}-1}\brac{\fint_{B_R}\abs{\nabla (w_1-w_2)}^p\id x}^{\frac{1}{p}}.\] The Ekeland variational principle (Lemma \ref{lem:Ekeland}) then implies the existence of $w \in W^{1,p}_u(B_R,\R^N)$ such that, with $\mathcal{F}(u) = \fint_{B_R}F(\nabla u)\id x$,
		\begin{enumerate}[label=(\alph*)]
		\item\label{it:uEkeland1} $d(u,w) \leq \sqrt{\varepsilon}$;
		\item $\mathcal{F}(w) \leq \mathcal{F}(u)$;
		\item\label{it:uEkeland3} $\mathcal{F}(w) \leq \mathcal{F}(v) + \sqrt{\varepsilon}\,d(w,v)$, for any $v \in X=W^{1,p}_u(B_R,\R^N)$.
		\end{enumerate} Subsequently, we have the Euler-Lagrange inequality: for any $\varphi \in W^{1,p}_0(B_R,\R^N)$ there holds
		\begin{equation}\label{eq:uELineq}
		\abs{\fint_{B_R}F^{\prime}(\nabla w)\cdot \nabla \varphi \id x } \leq \sqrt{\varepsilon}(1+\abs{A})^{\frac{p}{2}-1} \brac{\fint_{B_R}\abs{\nabla \varphi}^p\id x}^{\frac{1}{p}}.
		\end{equation}
	\begin{prop}\label{prop:uharapprox}
	Suppose that $F\colon\, \RNn\to \R$ satisfies \textnormal{\ref{it:intp}, \ref{it:intqc}, \ref{it:int2deriv}} and \textnormal{\ref{it:int2deLip}} with $p\in(1,2)$, and $\omega$ satisfies \textnormal{\ref{it:omega1}}. The map $u\in W^{1,p}_{loc}(\Omega,\R^N)$ is an $\omega$-minimizer of $\F$ with constant $R_0>0$. For any ball $B_R=B(x_0,R) \ssubset \Omega$ and any affine map $a\colon \R^n\to \R^N$ with $\nabla a = A\in \R^{N\times n}$, the system
		\begin{equation}\label{eq:uellipsystem}
		\left\{\begin{aligned}
		&-\mathrm{div}(F^{\pprime}(A)\nabla h) = 0,& &\mathrm{in }\ B_R\\
		&h\vert_{\partial B_R}=u\vert_{\partial B_R},& &\mathrm{on }\ \partial B_R
		\end{aligned} \right.
		\end{equation} admits a unique solution $h \in W_u^{1,p}(B_R,\R^N)$ such that 
		\begin{equation}\label{eq:uharapproxest}
		\left(\fint_{B_R}\lvert \nabla h-A\rvert^p\id x \right)^{\frac{1}{p}} \leq C \brac{\fint_{B_R}\abs{\nabla u-A}^p\id x}^{\frac{1}{p}},
		\end{equation} where $C=C(n,N,\frac{L}{\ell},p)>0$. Furthermore, set \[\varepsilon = \fint_{B_R}(a_6+a_7\abs{\nabla u}^p)\id x,\ \varepsilon_{A,p} = \frac{\varepsilon}{(1+\abs{A})^p},\ r= \max\left\{2,\frac{np^{\prime}}{n+p^{\prime}}\right\}, \] and denote $\frac{r^{\prime}}{p} = \min\{\frac{2}{p},\frac{n}{n-p}\}$ by $s$, then there exists a constant $C=C(n,N,L,\ell,p)>0$ such that
		\begin{equation}\label{eq:uharapprox}
		\fint_{B_R} E_p\left(\frac{u-h}{R(1+\lvert A\rvert)}\right)\id x \leq  C \left(\fint_{B_R}E_p\left(\frac{\nabla u-A}{1+\lvert A\rvert} \right) \id x\right)^s + C(\varepsilon_{A,p}^{\frac{p}{2}}+\varepsilon_{A,p}^{\frac{r^{\prime}}{2}}) .
		\end{equation}
	\end{prop}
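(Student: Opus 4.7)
\par The plan is to handle existence via linear elliptic theory, then derive (\ref{eq:uharapprox}) by a duality argument on the unit ball using an Ekeland-produced comparison map. For existence, by \ref{it:int2deriv} and (\ref{eq:pLH}) the symmetric bilinear form $\A := F^{\pprime}(A)$ satisfies the Legendre--Hadamard condition (\ref{eq:LHcondition}) with both $\lambda,\Lambda$ of order $(1+\abs{A})^{p-2}$, so after normalising the system is uniformly elliptic with ratio bounded in terms of $L/\ell,p$ only. Since $u-a\in W^{1,p}(B_R,\R^N)$, Lemma \ref{lem:ellip2}(a) combined with Remark \ref{rmk:ellipest} applied to $h-a$ yields the unique solution $h\in W^{1,p}_u(B_R,\R^N)$ together with (\ref{eq:uharapproxest}).

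\par For (\ref{eq:uharapprox}), the $\omega$-minimality and mean-coercivity (Lemma \ref{lem:1coercive}) imply, as in Subsection \ref{subsec:ELineq}, that $\F(u,B_R)\leq \inf_{v\in W^{1,p}_u(B_R,\R^N)}\F(v,B_R)+\omega_nR^n\varepsilon$. Applying Ekeland's principle (Lemma \ref{lem:Ekeland}) on $(W^{1,p}_u(B_R,\R^N),d)$ with the weighted metric $d(v_1,v_2):=(1+\abs{A})^{p/2-1}(\fint_{B_R}\abs{\nabla v_1-\nabla v_2}^p\id x)^{1/p}$ furnishes a comparison map $w$ within $d$-distance $\sqrt{\varepsilon}$ of $u$ and satisfying the Euler--Lagrange inequality (\ref{eq:uELineq}). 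Rescale to $B:=B(0,1)$ via $\Psi(y):=(w-h)(x_0+Ry)/(R(1+\abs{A}))$ and $\tilde W(y):=(w-a)(x_0+Ry)/(R(1+\abs{A}))$, and pick a bounded truncation $T$ modelled on $T(\xi):=\xi\langle\xi\rangle^{p-2}$, so that $[T(\xi),\xi]\sim E_p(\xi)$ by (\ref{eq:refcomp}) and a two-regime check ($\abs{\xi}\leq 1$ vs $\abs{\xi}>1$) gives $\abs{T(\xi)}^r\lesssim E_p(\xi)$ for any $r\in[2,p']$. Solve the dual Dirichlet problem $-\mathrm{div}(\A\nabla\Phi)=T(\Psi)$, $\Phi|_{\partial B}=0$ using Lemma \ref{lem:ellip2}(b) to obtain $\|\Phi\|_{W^{2,r}(B)}\lesssim\|T(\Psi)\|_{L^r(B)}\lesssim(\fint_B E_p(\Psi))^{1/r}$; since $r\geq np'/(n+p')$, the Sobolev embedding $W^{1,r}\hookrightarrow L^{p'}$ then gives $\|\nabla\Phi\|_{L^{p'}(B)}\lesssim(\fint_B E_p(\Psi))^{1/r}$.

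\par The core computation is the duality identity
\[ \fint_B E_p(\Psi)\id y \lesssim \fint_B[T(\Psi),\Psi]\id y = \fint_B \A[\nabla\Psi,\nabla\Phi]\id y, \]
obtained by integrating the dual system against $\Psi$ (valid as $\Psi|_{\partial B}=0$). Using the $\A$-harmonicity of $h$ and rescaling back to $B_R$ reduces matters to controlling $(1+\abs{A})^{-1}\fint_{B_R}\A[\nabla(w-a),\nabla\phi]\id x$, which we split as
\[ \fint_{B_R}(\A\nabla(w-a)-F_A^{\prime}(\nabla(w-a)))\cdot\nabla\phi\id x + \fint_{B_R}F_A^{\prime}(\nabla(w-a))\cdot\nabla\phi\id x. \]
The residual constant-$F^{\prime}(A)$-pieces integrate to zero by the divergence theorem; the first integrand is controlled pointwise by (\ref{eq:ushiftedest2}) and converted to $E_p(\nabla\tilde W)$ via (\ref{eq:Ehomog}), and a H\"older pairing with $\nabla\Phi\in L^{p'}$ yields the $(\fint_B E_p(\nabla\tilde W))^s$ contribution. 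The second integral equals $\fint F^{\prime}(\nabla w)\cdot\nabla\phi$ up to the vanishing piece, and is bounded by the Euler--Lagrange inequality (\ref{eq:uELineq}), producing the $\varepsilon_{A,p}^{r'/2}$-contribution. A triangle decomposition $u-h=(u-w)+(w-h)$ together with the Ekeland bound $d(u,w)\leq\sqrt{\varepsilon}$ and Poincar\'e on $u-w$ converts $\nabla\tilde W$ back to $(\nabla u-A)/(1+\abs{A})$, modulo an additional $\varepsilon_{A,p}^{p/2}$-remainder.

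\par The chief obstacle is aligning three competing constraints on $r$: $[T(\xi),\xi]\gtrsim E_p(\xi)$ for the LHS to capture $E_p$, $\abs{T(\xi)}^r\lesssim E_p(\xi)$ for the $L^r$-gain, and $r\geq np'/(n+p')$ for $W^{1,r}\hookrightarrow L^{p'}$ to pair against $\nabla\Psi\in L^p$. Together these force $r=\max\{2,np'/(n+p')\}$ and $s=r'/p=\min\{2/p,n/(n-p)\}$. The remaining bookkeeping of $(1+\abs{A})$-normalisations through the Ekeland metric, rescaling, and shifted-integrand estimates is tedious but systematic.
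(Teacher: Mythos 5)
Your proof follows essentially the same route as the paper: Ekeland's principle to get a comparison map $w$ and the Euler--Lagrange inequality, rescaling to the unit ball, a dual Dirichlet problem with a bounded truncation of $\Psi$ as forcing, the duality pairing $\fint[T,\Psi]=\fint\A[\nabla\Phi,\nabla\Psi]$, $L^{p'}$ control of $\nabla\Phi$ via the $W^{2,r}$ estimate plus Sobolev embedding, and a final triangle decomposition through $w$. Two cosmetic differences from the paper: you use the smooth truncation $T(\xi)=\xi\langle\xi\rangle^{p-2}$ in place of the piecewise $T_p$, which works equally well since $[T(\xi),\xi]\sim E_p(\xi)$ and $|T(\xi)|^r\lesssim E_p(\xi)$ for $r\in[2,p']$ by the same two-regime check; and you keep $\A=F^{\pprime}(A)$ unnormalised rather than setting $\A=F^{\pprime}(A)(1+|A|)^{2-p}$, which means the displayed estimate $\|\Phi\|_{W^{2,r}}\lesssim\|T(\Psi)\|_{L^r}$ actually carries a hidden factor $(1+|A|)^{2-p}$ from the coercivity constant --- this factor cancels against $(1+|A|)^{p-2}$ coming from $\A\nabla\Psi$ in the duality pairing, so the conclusion is unaffected, but the paper's normalised choice keeps the bookkeeping transparent and is worth adopting.
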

	\begin{proof}
	Define $\A := F^{\pprime}(A)(1+\abs{A})^{2-p}$. Then from \ref{it:int2deriv} and Lemma \ref{lem:LH} we know that $\abs{\A}\leq L$ and the operator satisfies the Legendre-Hadamard condition. Lemma \ref{lem:ellip1} and the comment after it indicate that there exists a unique solution $h\in W^{1,p}_u(B_R,\R^N)$ to (\ref{eq:uellipsystem}) satisfying (\ref{eq:uharapproxest}).
	\par Set $\tilde{F}=F_A, \tilde{u}=u-a$ and $\tilde{w}=w-a$. As in (\ref{eq:1harmid1}), we have, by Lemma \ref{lem:shiftedest3}, (\ref{eq:uELineq}), H\"{o}ler's inequality and the fact $E(z)^p \leq c(p)E_p(z)$,
		\begin{align}
		&\ \fint_{B_R} \tilde{F}^{\pprime}(0)[\nabla(w-h),\nabla \varphi]\id x \label{eq:uharmid1}\\
		\leq&\ (1+\abs{A})^{p-1}\brac{C\fint_{B_R}E\brac{\frac{\nabla \tilde{w}}{1+\lvert A\rvert}}\abs{\nabla \varphi}\id x  + \varepsilon^{\frac{1}{2}}_{A,p}\fint_{B_R}\lvert \nabla \varphi\rvert\id x } \notag\\
		\leq&\ (1+\abs{A})^{p-1} \brac{\fint_{B_R}\abs{\nabla \varphi}^{p^{\prime}} \id x}^{\frac{1}{p^{\prime}}}\brac{C\brac{\fint_{B_R}E_p\brac{\frac{\nabla \tilde{w}}{1+\abs{A}}}\id x}^{\frac{1}{p}} +\varepsilon^{\frac{1}{2}}_{A,p}} \notag
		\end{align} for any $\varphi \in W^{1,\infty}_0\cap C^1(B_R,\R^n)$. To find a proper test map $\varphi$, we again scale to the unit ball $B=B(0,1)$, define $\Phi, \Psi$ and $\tilde{W}$ as Proposition \ref{prop:1harapprox} and consider 
		\begin{equation}\label{eq:udualelliptic}
		\left\{\begin{aligned}
		&-\mbox{div}(\mathbb{A}\nabla \Phi) = T_p\brac{\frac{\Psi}{1+\lvert A\rvert}},& &\mbox{in }B\\
		&\Phi\vert_{\partial B} = 0,& &\mbox{on }\partial B,
		\end{aligned} \right.
		\end{equation} where for any $y\in \R^N$ \[ T_p(y)= \left\{
		\begin{aligned}
		&y,& &\abs{y}\leq 1\\
		&\abs{y}^{p-2}y,& &\abs{y}>1.
		\end{aligned}\right. \] Then we have $T_p(\frac{\Psi}{1+\abs{A}})\in L^{p^{\prime}}(B,\R^N)$ and that (\ref{eq:udualelliptic}) has a unique solution $\Psi \in W^{1,p^{\prime}}_0\cap W^{2,p^{\prime}}(B,\R^N)$ satisfying
		\begin{equation}
		\norm{\Phi}_{W^{2,r}}\leq C(n,N,r)\norm{T_p\brac{\frac{\Psi}{1+\abs{A}}}}_{L^r}, \quad \mbox{for any }r\in [2,p^{\prime}].
		\end{equation}	 Take $r = \max\{2,\frac{np^{\prime}}{n+p^{\prime}}\}$, which is smaller than $p^{\prime}$, then $\norm{\nabla \Phi}_{L^{p^{\prime}}}$ can be controlled in the following way with the Sobolev embedding 
		\begin{equation}\label{eq:uharmid2}
		\norm{\nabla \Phi}_{L^{p^{\prime}}} \leq C(p,n,N)\norm{\Phi}_{W^{2,r}} \leq C(p,n,N)\norm{T_p\brac{\frac{\Psi}{1+\abs{A}}}}_{L^r}.
		\end{equation}	 When $\abs{y}\leq 1$, it is easy to see that $\abs{T_p(y)}^r \leq \abs{y}^2 \leq \frac{1}{a_1}E_p(y)$. If $\abs{y}>1$, we consider two cases:
		\begin{itemize}[label=\footnotesize{$\bullet$}]
		\item $\frac{2n}{n+2}\leq p <2$, i.e., $\frac{np^{\prime}}{n+p^{\prime}}\leq 2$ and $r=2$: $(p-1)r =2(p-1)\leq p$ as $p<2$;
		\item $1<p<\frac{2n}{n+2}$, i.e., $\frac{np^{\prime}}{n+p^{\prime}}>2$ and $r= \frac{np^{\prime}}{n+p^{\prime}}$: $(p-1)r = \frac{np(p-1)}{n(p-1)+p}<p$.
		\end{itemize} In both cases, we have $\abs{T_p(y)}^r =\abs{y}^{p-1}r \leq \abs{y}^p\leq CE_p(y)$.	 Thus, with (\ref{eq:uharmid1}), (\ref{eq:uharmid2}) and the difference between $u$ and $w$ (see \ref{it:uEkeland1}), the estimate (\ref{eq:uharapprox}) can be obtained as in Proposition \ref{prop:1harapprox}.
	\end{proof}
	
\subsection{Excess decay estimate}\label{subsec:uexcess}
	\par For a ball $B_R=B(x_0,R)\ssubset \Omega$, we define the excess
		\begin{equation}\label{eq:uexcess}
		\EE(x_0,R):= \fint_{B_R}E_p\left(\frac{\nabla u-(\nabla u)_{R}}{1+\lvert (\nabla u)_R\rvert}\right)\id x.
		\end{equation} When the centre $x_0$ is fixed, we will abbreviate the excess as $\EE(R)$.	
	\begin{prop}\label{prop:uexdecay}
	Suppose that $F\colon \R^{N\times n}\to \R$ satisfies \textnormal{\ref{it:intp}, \ref{it:intqc}, \ref{it:int2deriv}} and \textnormal{\ref{it:int2deLip}} with $p\in (1,2)$, and $\omega$ satisfies \textnormal{\ref{it:omega1}}. The map $u\in W^{1,p}_{loc}(\Omega,\R^N)$ is an $\omega$-minimizer of $\F$ with $R_0>0$. For any $\sigma \in(0,1)$, there exists $\varepsilon_1>0$ such that if
		\begin{equation}
		R<R_0, \quad \EE(x_0,R)<\varepsilon_1
		\end{equation} for some ball $B_R = B(x_0,R)\ssubset \Omega$, then we have 
		\begin{equation}\label{eq:uexdecay}
		\EE(\sigma R)\leq c_1\sigma^{-(n+2)}(\EE(R)^s+\omega(R)^p)+c_2 \sigma\EE(R)+c_3\omega(2\sigma R),
		\end{equation} where $s$ is as in Proposition \textnormal{\ref{prop:uharapprox}} and $c_i=c_i(n,N,L,\ell,p)>0$, $i=1,2,3$.
	\end{prop}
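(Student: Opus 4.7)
The strategy mirrors Proposition \ref{prop:1excessest}, with the reference integrand $E$ replaced by $E_p$ and the normalising factor $1+|A|$ tracked throughout. The case $\sigma\in[\tfrac{1}{5},1)$ is handled directly by Lemma \ref{lem:Emeanest1}, so assume $\sigma\in(0,\tfrac{1}{5})$. Set $A := (\nabla u)_{B_R}$, $a(x) := u_{B_R}+A(x-x_0)$, and let $h\in W^{1,p}_u(B_R,\R^N)$ be the solution to (\ref{eq:uellipsystem}) produced by Proposition \ref{prop:uharapprox}. The rescaled coefficient $F''(A)(1+|A|)^{2-p}$ is Legendre--Hadamard elliptic with constants independent of $A$ by Lemma \ref{lem:LH}, so Lemma \ref{lem:ellip1} applied to $h$ yields
\begin{equation*}
|\nabla h(x_0)-A| + R\sup_{B_{R/2}}|\nabla^2 h| \;\leq\; c_a\fint_{B_R}|\nabla h-A|\,dx \;\leq\; C(1+|A|)\,\EE(R)^{1/(2p)},
\end{equation*}
where the last bound combines (\ref{eq:uharapproxest}) with Lemma \ref{lem:Emeanest2}.

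Define the affine map $a_0(x) := a(x) + (h-a)(x_0) + (\nabla h(x_0)-A)(x-x_0)$, so that $\nabla a_0 = \nabla h(x_0)$ and $a_0$ is the first-order Taylor expansion of $h$ at $x_0$. Once $\varepsilon_1$ is chosen small enough (depending on $\sigma$) the displayed estimate gives $1+|\nabla a_0|\sim 1+|A|$, with constants depending only on $n,N,L,\ell,p$. Apply the Caccioppoli-type inequality (\ref{eq:ucaccio}) on $(B_{\sigma R},B_{2\sigma R})$ with the affine map $a_0$, and split $u-a_0 = (u-h)+(h-a_0)$. The Taylor remainder obeys $|h-a_0|\leq C(2\sigma R)^2\sup_{B_{R/2}}|\nabla^2 h|$; after dividing by $2\sigma R(1+|A|)$ and applying Lemma \ref{lem:Eest} this contributes a term of order $\sigma\,\EE(R)$ to the right-hand side. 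The harmonic approximation piece is dominated by $\sigma^{-(n+2)}\fint_{B_R}E_p\!\bigl(\tfrac{u-h}{R(1+|A|)}\bigr)dx$, which is controlled by (\ref{eq:uharapprox}); since Lemma \ref{lem:Emeanest2} gives $\fint_{B_R}|\nabla u|^p\,dx\leq C(1+|A|)^p$ under $\EE(R)\leq 1$, the normalised data satisfy $\varepsilon_{A,p}\lesssim\omega(R)$, and this produces the $c_1\sigma^{-(n+2)}(\EE(R)^s+\omega(R)^p)$ contribution after collecting the appropriate power of $\omega(R)$.

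It remains to replace $\nabla a_0$ by $(\nabla u)_{B_{\sigma R}}$ in the excess. Jensen's inequality gives
\begin{equation*}
|(\nabla u)_{B_{\sigma R}}-A| \;\leq\; \sigma^{-n}\fint_{B_R}|\nabla u-A|\,dx \;\leq\; C\sigma^{-n}(1+|A|)\,\EE(R)^{1/(2p)},
\end{equation*}
so that, shrinking $\varepsilon_1$ further in terms of $\sigma$, the three quantities $1+|A|$, $1+|\nabla a_0|$ and $1+|(\nabla u)_{B_{\sigma R}}|$ are pairwise comparable. The analogue of Lemma \ref{lem:Emeanest1} then converts the Caccioppoli output into $\EE(\sigma R)$, and collecting all contributions yields (\ref{eq:uexdecay}). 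The main technical obstacle throughout is precisely this bookkeeping of the normalising factor: unlike in the linear-growth setting of Proposition \ref{prop:1excessest}, the excess here depends on the local gradient average, so the three factors above could a priori differ, and forcing their comparability is what fixes $\varepsilon_1$ as a function of $\sigma$.
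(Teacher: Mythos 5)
Your proposal mirrors the paper's proof step for step: the same affine maps $a$ and $a_0$ (the first-order Taylor expansion of $h$ at $x_0$), the same Caccioppoli inequality on $B_{\sigma R}\subset B_{2\sigma R}$ applied to $u-a_0=(u-h)+(h-a_0)$, the same harmonic approximation input, the same pairwise comparability of $1+|A|$, $1+|\nabla h(x_0)|$, $1+|(\nabla u)_{B_{\sigma R}}|$ achieved by shrinking $\varepsilon_1$ in terms of $\sigma$, and the same bound $\varepsilon_{A,p}\lesssim\omega(R)$. You also correctly identify that tracking the normalising factor is the genuinely new difficulty compared with Proposition~\ref{prop:1excessest}, so this is essentially the paper's argument.
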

	\begin{proof}
	We only consider $\sigma\in (0,\frac{1}{4})$ as it is obvious when $\sigma \in [\frac{1}{4},1)$. As in Proposition \ref{prop:1excessest}, we define $a(x) = u_{B_R} +(\nabla u)_{B_R}(x-x_0)$, $\tilde{u}=u-a$ and $\tilde{F}=F_{\nabla a}$. Let $h$ be the harmonic map determined by (\ref{eq:uellipsystem}) and set \[ \tilde{h}=h-a, \quad a_1(x)=\tilde{h}(x_0)+\nabla \tilde{h}(x_0)(x-x_0),\quad a_0=a+a_1. \] With (\ref{eq:ellipest1}) we have 
		\begin{align}
		&\ \lvert \nabla h(x_0)-(\nabla u)_R\rvert = \lvert \nabla \tilde{h}(x_0)\rvert \leq C\fint_{B_R}\lvert \nabla \tilde{h}\rvert\id x \\
		\leq&\ c_4\fint_{B_R}\lvert \nabla\tilde{u}\rvert\id x \leq c_4 \brac{\fint_{B_R}\abs{\nabla \tilde{u}}^p \id x}^{\frac{1}{p}} \label{eq:uexdecaymid1} 
		\end{align} 
\par In each step, a different normalising factor is needed, and we now give the comparison of them. The first one is as follows:
		\begin{align}
		1+\lvert (\nabla u)_R\rvert & \leq 1 + \lvert (\nabla u)_{\sigma R}\rvert + \sigma^{-n}\fint_{B_R}\lvert \nabla u-(\nabla u)_R\rvert\id x \notag \\
		&\leq  1 + \lvert (\nabla u)_{\sigma R}\rvert + \frac{1+\lvert (\nabla u)_{R}\rvert}{\sigma^n} \brac{\fint_{B_R}\brac{\frac{\lvert \nabla u-(\nabla u)_R\rvert}{1+\lvert (\nabla u)_R\rvert}}^p \id x}^{\frac{1}{p}} \label{eq:uaverest}\\
		&\leq 1 + \lvert (\nabla u)_{\sigma R}\rvert + \frac{1+\lvert (\nabla u)_{R}\rvert}{\sigma^n}(3\EE(R))^{\frac{1}{2p}},\notag
		\end{align} where the last line is from Lemma \ref{lem:Emeanest2} if we take $\varepsilon_1<1$. We further require $\sigma^{-n}(3\varepsilon_1)^{\frac{1}{2p}}<\frac{1}{2}$, i.e., $\varepsilon_1 < \frac{\sigma^{2np}}{3\cdot 4^p}$, then the above estimate gives 
		\begin{equation}\label{eq:uexest1}
		1+\lvert (\nabla u)_R\rvert \leq 2(1+\lvert (\nabla u)_{\sigma R}\rvert).
		\end{equation} For $1+\lvert \nabla h(x_0)\rvert$ and $1+\lvert (\nabla u)_{\sigma R}\rvert$, we have
		\begin{align*}
		\frac{1+\lvert \nabla h(x_0)\rvert}{1+\lvert (\nabla u)_{\sigma R}\rvert} &\leq 1 +\frac{1}{1+\lvert (\nabla u)_{\sigma R}\rvert}(\lvert \nabla h(x_0)-(\nabla u)_R\rvert+\lvert (\nabla u)_R-(\nabla u)_{\sigma R}\rvert)\\
		&\overset{(\ref{eq:uexdecaymid1})}{\leq} 1+ c_4\fint_{B_R}\frac{\lvert \nabla u-(\nabla u)_R\rvert}{1+\lvert (\nabla u)_{\sigma R}\rvert}\id x + \sigma^{-n}\fint_{B_R}\frac{\lvert \nabla u-(\nabla u)_R\rvert}{1+\lvert (\nabla u)_{\sigma R}\rvert}\id x\\
		& \leq 1 +2(c_4+\sigma^{-n})(3\EE(R))^{\frac{1}{2p}},
		\end{align*} where the last line follows from (\ref{eq:uexest1}), H\"{o}lder's inequality and Lemma \ref{lem:Emeanest2}. Taking $2(c_4+\sigma^{-n})(3\varepsilon_1)^{\frac{1}{2p}}<\frac{1}{2}$, i.e., $\varepsilon_1 < \frac{1}{3\cdot 16^p}(c_4+\sigma^{-n})^{-2p}$, we have 
		\begin{equation}\label{eq:uexest2}
		\frac{1+\lvert \nabla h(x_0)\rvert}{1+\lvert (\nabla u)_{\sigma R}\rvert} \leq \frac{3}{2}.  
		\end{equation} The comparison between $1+\abs{(\nabla u)_R}$ and $1+\abs{\nabla h(x_0)}$ is similar:
		\begin{align*}
		\frac{1+\lvert (\nabla u)_R\rvert}{1+\lvert \nabla h(x_0)\rvert} &\leq 1+ \frac{\lvert (\nabla u)_R-\nabla h(x_0)\rvert}{1+\lvert (\nabla u)_R\rvert}\cdot \frac{1+\lvert (\nabla u)_R\rvert}{1+\lvert \nabla h(x_0)\rvert}\\
		&\leq 1+ 2c_4(\EE(R))^{\frac{1}{2p}}\frac{1+\lvert (\nabla u)_R\rvert}{1+\lvert \nabla h(x_0)\rvert}\\
		&\leq 1 +\frac{1}{2}\cdot \frac{1+\lvert (\nabla u)_R\rvert}{1+\lvert \nabla h(x_0)\rvert},
		\end{align*} which implies 
		\begin{equation}\label{eq:uexest3}
		\frac{1+\lvert (\nabla u)_{R}\rvert}{1+\lvert \nabla h(x_0)\rvert}\leq 2.
		\end{equation}
	\par Now we estimate $\EE(\sigma R)$: by (\ref{eq:Eest2}) and (\ref{eq:uexest2}) there holds
		\begin{equation}\label{eq:uexdecaymid3}
		\EE(\sigma R) = \fint_{B_{\sigma R}} E_p\left(\frac{\nabla u-(\nabla u)_{\sigma R}}{1+\lvert (\nabla u)_{\sigma R}\rvert} \right)\id x \leq 16 \fint_{B_{\sigma R}}E_p\left(\frac{\nabla u-\nabla h(x_0)}{1+\lvert \nabla h(x_0)\rvert} \right)\id x.
		\end{equation} The right-hand side can be estimated by the Caccioppoli-type inequality (\ref{eq:ucaccio}) 
		\begin{equation}\label{eq:uexdecaymid4}
		\fint_{B_{\sigma R}}E_p\left(\frac{\nabla u-\nabla h(x_0)}{1+\lvert \nabla h(x_0)\rvert} \right)\id x \leq C\fint_{B_{2\sigma R}}E_p\left(\frac{u-a_0}{2\sigma R(1+\lvert \nabla h(x_0)\rvert)} \right) \id x + C\omega(2\sigma R).
		\end{equation} The term involving $u-a_0$ can be estimated, like in Proposition \ref{prop:1excessest}, by decomposing $u-a_0$ into $\tilde{u}-\tilde{h}$ and $\tilde{h}-a_1$. Applying (\ref{eq:uharapprox}) and (\ref{eq:uexest3}), we have 
		\begin{align*}
		\fint_{B_{2\sigma R}} E_p\left(\frac{\tilde{u}-\tilde{h}}{2\sigma R(1+\abs{\nabla h(x_0)})}\right)\id x &\leq C\sigma^{-(n+2)}\fint_{B_R}E_p\brac{\frac{u-h}{R(1+\abs{(\nabla u)_R})}} \label{eq:uexdecaymid5}\\ 
		&\leq C\sigma^{-(n+2)}\left(\EE(R)^s + \varepsilon_{A,p}^{\frac{p}{2}}+\varepsilon_{A,p}^{\frac{r^{\prime}}{2}}\right).\notag
		\end{align*} The estimate of $\abss{\nabla^2 h(x_0)}$ in Lemma \ref{lem:ellip1} and (\ref{eq:uexest3}) implies
		\begin{equation}\label{eq:uexdecaymid6}
		 \fint_{B_{2\sigma R}} E_p\left(\frac{\tilde{h}-a_1}{2\sigma R(1+\abs{\nabla h(x_0)})}\right)\id x \\
		\leq  E_p\brac{C\sigma \fint_{B_R} \frac{\abs{\nabla u-(\nabla u)_R}}{1+\abs{(\nabla u)_R}}\id x} \leq C\sigma \EE(R),
		\end{equation} where we used (\ref{eq:Eest2}) and Jensen's inequality. Notice that the term $\varepsilon_{A,p}$ can be estimated with the triangle inequality and Lemma \ref{lem:Emeanest2}, and we obtain \[ \varepsilon_{A,p} = \omega(R)\fint_{B_R} \frac{a_6+a_7\lvert \nabla u\rvert^p}{(1+\lvert (\nabla u)_R\rvert)^p}\id x \leq C\omega(R). \]  Thus, combining (\ref{eq:uexdecaymid3})-(\ref{eq:uexdecaymid6}) we have the desired estimate (\ref{eq:uexdecay}) of $\EE(\sigma R)$ under the condition $\varepsilon_1 < \frac{1}{3}\min\{\frac{\sigma^{2np}}{4^p}, \frac{1}{16^p}(c_4+\sigma^{-n})^{-2p}\}$.
	\end{proof} 
	
\subsection{Final conclusion}\label{subsec:ufinal}
	\par In this subsection, we use the excess decay estimate above to further obtain a Morrey's type estimate for $\nabla u$, which then implies the H\"{o}lder regularity of $u$.
	\par For any $\alpha \in(0,1)$, we take $\gamma =p(\alpha-1)+n \in(n-p,n)$.
	\begin{prop}
	Suppose that $F\colon \R^{N\times n}\to \R$ satisfies \textnormal{\ref{it:intp}, \ref{it:intqc}, \ref{it:int2deriv}} and \textnormal{\ref{it:int2deLip}} with $p\in (1,2)$, and $\omega$ satisfies \textnormal{\ref{it:omega1}}. The map $u\in W^{1,p}_{loc}(\Omega,\R^N)$ is an $\omega$-minimizer of $\F$ with $R_0>0$. There exist $R_1 \in (0,R_0), \varepsilon_2 \in (0,1)$ such that for any ball $B_R=B(x_0,R) \ssubset \Omega$ with \[ 0<R<R_1, \quad \EE(x_0,R)<\varepsilon_2, \] we have 
		\begin{equation}
		\int_{B_{\rho}}\abs{\nabla u}^p\id x \leq c_5 \brac{\brac{\frac{\rho}{R}}^{\gamma}\int_{B_R} \abs{\nabla u}^p \id x +\rho^{\gamma}}
		\end{equation} for some $c_5=c_5(n,N,L,\ell,p,\gamma)>0$, where $\gamma \in (n-p,n)$ is defined as above.
	\end{prop}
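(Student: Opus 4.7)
The plan is to iterate (\ref{eq:uexdecay}) starting from $B(x_0, R)$, use a multiplicative telescoping estimate for the averages $(\nabla u)_{\sigma^k R}$ to control their growth along dyadic scales, and then convert the normalised-excess bound into an unnormalised Morrey estimate for $|\nabla u|^p$. Given $\alpha\in(0,1)$ with $\gamma = p(\alpha-1) + n$, fix an auxiliary $\alpha_0 \in (\alpha, 1)$. Choose $\sigma\in(0,1/4)$ small enough that $c_2\sigma \leq \tfrac{1}{4}\sigma^{\alpha_0}$, then $\varepsilon_2\in(0,\varepsilon_1(\sigma))$ so small that $c_1 \sigma^{-(n+2)}\varepsilon_2^{s-1} \leq \tfrac{1}{4}\sigma^{\alpha_0}$ and $\varepsilon_2 \leq \min\{1,\varepsilon_1/2\}$, and finally $R_1$ so small that $\omega(R_1)$ meets the two smallness requirements made explicit below.

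Under these choices, (\ref{eq:uexdecay}) reads $\EE(\sigma^k R) \leq \tfrac{1}{2}\sigma^{\alpha_0}\EE(\sigma^{k-1}R) + C\bigl(\omega(\sigma^{k-1}R)^p + \omega(2\sigma^k R)\bigr)$, and since $\omega$ is non-decreasing and bounded by $1$, induction yields $\EE(\sigma^k R) \leq \sigma^{k\alpha_0}\EE(R) + C\omega(R)$, which stays below $\varepsilon_1$ for every $k$ provided $\omega(R_1)$ is sufficiently small. The second ingredient is a multiplicative telescoping for the means: using Lemma \ref{lem:Emeanest2} and the computation behind (\ref{eq:uaverest}),
\begin{equation*}
1 + \bigl|(\nabla u)_{\sigma^{j+1}R}\bigr| \;\leq\; \bigl(1+\bigl|(\nabla u)_{\sigma^j R}\bigr|\bigr)\Bigl(1 + c_\sigma\bigl(\EE(\sigma^j R)^{1/2} + \EE(\sigma^j R)^{1/p}\bigr)\Bigr).
\end{equation*}
Taking logarithms with $\log(1+x)\leq x$, the error sum splits into a convergent geometric part depending only on $\sigma$ and $\EE(R)$ and a part bounded by $c_\sigma C k\,\omega(R)^{1/2}$. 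For any $\rho\in(0,R)$, picking $k$ with $\sigma^{k+1}R<\rho\leq\sigma^k R$ gives $k\leq \log(R/\rho)/\log(1/\sigma)+1$, which converts the $k$-linear contribution into a polynomial factor $(R/\rho)^\kappa$ whose exponent $\kappa$ is a constant multiple of $\omega(R)^{1/2}/\log(1/\sigma)$. Shrinking $R_1$ so that $\kappa\leq 1-\alpha$ then delivers
\begin{equation*}
1 + \bigl|(\nabla u)_\rho\bigr| \;\leq\; C\,(R/\rho)^{1-\alpha}\bigl(1+|(\nabla u)_R|\bigr).
\end{equation*}

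For the final step, the triangle inequality together with $\fint_{B_\rho}|\nabla u - (\nabla u)_\rho|^p \leq C(1+|(\nabla u)_\rho|)^p\bigl(\EE(\rho)^{p/2}+\EE(\rho)\bigr) \leq C(1+|(\nabla u)_\rho|)^p$ (since $\EE(\rho)\leq\varepsilon_1\leq 1$) yields $\int_{B_\rho}|\nabla u|^p \leq C\rho^n\bigl(1+|(\nabla u)_\rho|\bigr)^p$. Substituting the previous bound and using $n - p(1-\alpha) = \gamma$ produces
\begin{equation*}
\int_{B_\rho}|\nabla u|^p \;\leq\; C R^{n-\gamma}\rho^\gamma(1+|(\nabla u)_R|)^p,
\end{equation*}
and replacing $(1+|(\nabla u)_R|)^p$ by $C\bigl(1 + R^{-n}\int_{B_R}|\nabla u|^p\bigr)$ and absorbing $R^{n-\gamma}\leq R_1^{n-\gamma}$ into the constant yields the claimed inequality. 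The main delicacy is the telescoping: because $\omega$ is not assumed Dini, the naive bound on the means would accumulate a divergent error, but the logarithmic relation $k\sim \log(R/\rho)$ converts the $k$-linear accumulation into a polynomial factor $(R/\rho)^\kappa$ whose exponent can be made arbitrarily small by further shrinking $R_1$, and this is exactly what is absorbed into the exponent $1-\alpha$.
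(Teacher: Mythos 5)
Your proposal is correct in outcome but takes a genuinely different route from the paper. The paper iterates directly on the unnormalised quantity $\lambda(\rho)=\int_{B_\rho}|\nabla u|^p\,dx$: after establishing by induction that $\EE(\sigma^k R)<\varepsilon_2$ for all $k$, it derives in one stroke the single inequality
\begin{equation*}
\lambda(\sigma^{k+1}R)\ \leq\ \sigma^{(\gamma+n)/2}\,\lambda(\sigma^k R)\ +\ \omega_n(\sigma^k R)^n
\end{equation*}
by expanding $|\nabla u|^p$ around its mean on $B_{\sigma^k R}$ and absorbing $(1+|(\nabla u)_{\sigma^k R}|)^p$ back into $\lambda(\sigma^k R)$ via Jensen's inequality. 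The Morrey estimate then follows immediately from the standard iteration lemma (Giusti, Lemma 7.3). Your proof instead keeps the normalisation explicit: you iterate the excess, telescope the means $1+|(\nabla u)_{\sigma^j R}|$ multiplicatively using $\fint_{B_{\sigma^j R}}|\nabla u-(\nabla u)_{\sigma^j R}|\leq C(1+|(\nabla u)_{\sigma^j R}|)\,\EE(\sigma^j R)^{1/2}$, exponentiate the accumulated error, and at the end convert the polynomial growth factor $(R/\rho)^{\kappa}$ into a deficit $1-\alpha$ in the Morrey exponent by shrinking $R_1$. This makes visible \emph{why} the means can grow as $\rho\downarrow 0$ when $\omega$ is not Dini, and why the growth is tamed to an arbitrarily small power, which is a useful piece of intuition the paper's route hides in the iteration lemma. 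The price is that the argument is longer and has a few loose ends you should tighten: (i) the telescoping inequality should carry the exponent $\EE^{1/2}$ only (your added $\EE^{1/p}$ is dominated and harmless, but the derivation should go through $E_1\leq CE_p$ and Lemma \ref{lem:Emeanest2} with $p=1$, not through $L^p$ and H\"older, which gives the weaker exponent $1/(2p)$); (ii) in the final step you apply the mean/excess control at an arbitrary radius $\rho$, but you have only established it at the dyadic scales $\sigma^k R$; one should pass to the largest $\sigma^k R\geq\rho$, bound $\int_{B_\rho}\leq\int_{B_{\sigma^k R}}$, and use $\rho\geq\sigma^{k+1}R$ to pick up the extra $\sigma^{-n}$-type constant; (iii) the smallness constraints on $\sigma,\varepsilon_2,R_1$ also need to guarantee $\EE(\sigma^j R)<\varepsilon_1(\sigma)$ at every step so that Proposition \ref{prop:uexdecay} can be re-applied, exactly as the paper does in its induction on \textup{(\textbf{I}$_k$)}. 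With these repairs your argument is complete, but it is more work than the paper's; the paper's Jensen-then-iteration-lemma approach is the cleaner route.
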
 
	\begin{proof}
	Fix the constants $\sigma, \varepsilon_2$ and $R_1$ in order: 
		\begin{align}
		&\sigma = \min\left\{\frac{1}{2},\frac{1}{4c_2}, 2^{-\frac{2p}{n-\gamma}}\right\},\label{eq:usigma} \\
		&\varepsilon_2 = \min\left\{\varepsilon_1, \left(\frac{\sigma^{n+2}}{4c_1}\right)^{\frac{p}{r^{\prime}-p}}, \frac{1}{3\cdot 16^{p-1}}, \frac{\sigma^{2n}}{3\cdot 4^{p-1}}\right\}, \label{eq:ueps}\\
		& R_1 \in (0,R_0) \mbox{ such that }\omega(R_1)\leq \min\left\{\brac{\frac{\sigma^{n+2}\varepsilon_2}{4c_1}}^\frac{1}{p}, \frac{\varepsilon_2}{4c_3} \right\}, \label{eq:ur}
		\end{align} where $r$ is as in Proposition \ref{prop:uharapprox}, and $\varepsilon_1$ and $c_i, i=1,2,3$, are as in Proposition \ref{prop:uexdecay}.
	\par Suppose that for the ball $B_R=B(x_0,R) \ssubset \Omega$ with some $R\in (0,R_1)$ there holds 
		\begin{equation}
		\EE(x_0,R) <\varepsilon_2.
		\end{equation} We will show that 
		\leqnomode \begin{center} \begin{minipage}{.5\textwidth}
		\begin{align}\label{it:uitk} \tag{\textbf{I}$_k$}
		\EE(\sigma^k R) < \varepsilon_2 
		\end{align} \end{minipage} \end{center}
holds for any $k\geq 0$ by induction. Obviously, it holds for $k=0$, and we assume that (\ref{it:uitk}) holds for some $k \geq 0$. With our choice of $\varepsilon_2$, Proposition \ref{prop:uexdecay} implies \[ \EE(\sigma^{k+1} R)\leq c_1\sigma^{-(n+2)}(\EE(\sigma^k R)^s+\omega(R)^p)+c_2 \sigma\EE(\sigma^k R)+c_3\omega(2\sigma^{k+1} R), \] where $s=\frac{r^{\prime}}{p}$. By (\ref{eq:usigma})-(\ref{eq:ur}), we know \[ c_1\sigma^{-(n+2)}\varepsilon_2^{s-1} \leq \frac{1}{4},\ c_1\sigma^{-(n+2)}\sqrt{\omega(R)}\leq\frac{\varepsilon_2}{4},\ c_2\sigma \leq\frac{1}{4},\ c_3\omega(R)\leq \frac{\varepsilon_2}{4}, \] which thus gives (\textbf{I}$_{k+1}$). Therefore, we have (\ref{it:uitk}) holds for any $k\in \N$.
	\par With (\ref{it:uitk}) and Lemma \ref{lem:Emeanest2} we have 
		\begin{align*}
		 &\int_{B_{\sigma^{k+1}R}} \lvert \nabla u\rvert^p \leq 2^{p-1}\brac{\int_{B_{\sigma^{k+1}R}} \lvert \nabla u-(\nabla u)_{\sigma^k R}\rvert^p \id x+\omega_n (\sigma^{k+1}R)^n\lvert (\nabla u)_{\sigma^k R}\rvert^p}\\
		&\leq  2^{p-1}(1+\lvert (\nabla u)_{\sigma^k R}\rvert)^p\int_{B_{\sigma^{k+1}R}} \frac{\lvert \nabla u-(\nabla u)_{\sigma^k R}\rvert^p}{(1+\lvert (\nabla u)_{\sigma^k R}\rvert)^p}\id x + 2^{p-1}\sigma^n \int_{B_{\sigma^k R}}\lvert \nabla u\rvert^p\id x\\
		&\leq 2^{p-1}(1+\lvert (\nabla u)_{\sigma^k R}\rvert)^p\omega_n (\sigma^kR)^n \sqrt{3\EE(\sigma^k R)} + 2^{p-1}\sigma^n \int_{B_{\sigma^k R}}\lvert \nabla u\rvert^p \id x\\
		&\leq 2^{p-1}(2^{p-1}\sqrt{3\varepsilon_2}+\sigma^n)\int_{B_{\sigma^k R}}\lvert \nabla u\rvert^p\id x +2^{2(p-1)}\omega_n (\sigma^{k}R)^n \sqrt{3\varepsilon_2}
		\end{align*} From the choice of $\varepsilon_2, \sigma$, it is easy to see \[ 2^{2(p-1)}\sqrt{3\varepsilon_2}\leq 1,\quad 2^{p-1}(2^{p-1}\sqrt{3\varepsilon_2}+\sigma^n) \leq 2^p \sigma^n \leq \sigma^{\frac{\gamma+n}{2}}. \] Set $\lambda(\rho) := \int_{B_{\rho}}\lvert \nabla u\rvert^p \id x$, then the above gives 
		\begin{equation}
		\lambda(\sigma^{k+1}R)\leq \sigma^{\frac{\gamma+n}{2}}\lambda(\sigma^k R)+ \omega_n (\sigma^k R)^n
		\end{equation} for any integer $k\geq 0$. With Lemma 7.3 in \cite{Giusti03} we can further obtain 
		\begin{equation}
		\lambda(t) \leq c_5\left( \left(\frac{t}{R}\right)^{\gamma}\lambda(R)+ t^{\gamma} \right),
		\end{equation} where $c_5=c_5(n,\gamma, \sigma)$. 
	\end{proof} Then by a discussion similar to that at the end of Subsection \ref{subsec:1iteration}, there exists a relatively closed null set $S^{\prime}_u \subset \Omega$ such that $\lvert \nabla u\rvert$ is in the Morrey space $L^{p,\gamma}_{loc}(\Omega\setminus S^{\prime}_u)$. The Sobolev embedding implies that $u$ lies in the Campanato space $\mathcal{L}^{p,\gamma+p}_{loc}(\Omega\setminus S^{\prime}_u,\R^N)$, which is actually $C^{0,\alpha}_{loc}(\Omega\setminus S^{\prime}_u,\R^N)$ as $\gamma=p(\alpha -1)+n$. The proof of Theorem \ref{thm:upartialreg} is then complete.
	\begin{rmk}\label{rmk:uind}
	Theorem \ref{thm:upartialreg} can also be approached by an indirect argument, similar to that in \cite{DGK05} or \cite{FosMin08}, by choosing normalising factors carefully. For such an argument, the Lipschitz continuity of $F^{\pprime}$ can be relaxed to
	\begin{enumerate}[label=(\textbf{H3$_\theFhypoths$})$^{\prime}$,leftmargin=3\parindent, itemsep=.2em]{\usecounter{Fhypoths}} \setcounter{Fhypoths}{2}
		\item\label{it:int2deLipwk} For any $z_1,z_2 \in \R^{N\times n}$ we have \[ \lvert F^{\pprime}(z_1)-F^{\pprime}(z_2)\rvert \leq \nu\left(\frac{\lvert z_1-z_2\rvert}{1+\lvert z_1\rvert+\lvert z_2\rvert}\right)\frac{1}{(1+\lvert z_1\rvert+\lvert z_2\rvert)^{2-p}},  \] where $\nu$ is a concave, non-decreasing function on $[0,\infty)$ with $\nu(0)=\lim_{t\to 0}\nu(t)=0$ and $\nu \leq 1$.
		\end{enumerate}
	\end{rmk}

\subsection*{Acknowledgements} The author would like to thank Jan Kristensen for his guidance throughout the project and Franz Gmeineder for helpful discussions and suggestions. Part of the work was done during a visit at the School of Mathematical Sciences at Peking University, and the author would like to express gratitude for the support from the institution.

\bibliographystyle{alpha}
\nocite{*}
\bibliography{partialregsubq}

\end{document}